\newcommand{\nc}{\mbox{${\mathbb C}$}}
\newcommand{\nz}{\mbox{${\mathbb Z}$}}
\newcommand{\nq}{\mbox{${\mathbb Q}$}}
\newcommand{\nr}{\mbox{${\mathbb R}$}}
\newcommand{\snc}{\mbox{\scriptsize{${\mathbb C}$}}}
\newcommand{\snr}{\mbox{\scriptsize{${\mathbb R}$}}}
\newcommand{\gtg}{\mathfrak g}
\newcommand{\gtgl}{\mathfrak gl}
\newcommand{\gtsl}{\mathfrak sl}
\newcommand{\gth}{\mathfrak h}
\newcommand{\tf}{\widetilde{f}}
\newcommand{\te}{\widetilde{e}}
\newcommand{\eps}{\varepsilon}
\newcommand{\vphi}{\varphi}
\newcommand{\out}{\mbox{out}}
\newcommand{\sout}{\mbox{\scriptsize{out}}}
\newcommand{\inn}{\mbox{in}}
\newcommand{\sinn}{\mbox{\scriptsize{in}}}
\newcommand{\cB}{\mathcal B}
\newcommand{\cO}{\mathcal O}
\newcommand{\cM}{\mathcal M}
\newcommand{\cV}{\mathcal V}
\newcommand{\onu}{\overline{\nu}}
\newcommand{\ophi}{\overline{\phi}}
\newcommand{\dimc}{\dim_{\mbox{\scriptsize{${\mathbb C}$}}}}
\newcommand{\homc}{\mbox{Hom}_{\mbox{\scriptsize{${\mathbb C}$}}}}
 \newtheorem{thm}{Theorem}[subsection]
 \newtheorem{prop}[thm]{Proposition}
 \newtheorem{lemma}[thm]{Lemma}
 \newtheorem{cor}[thm]{Corollary}
 \newtheorem{defn}[thm]{Definition}
 \newtheorem{ex}[thm]{Example}
 \newtheorem{rem}{Remark}
\newenvironment{Ac}%
 {\hspace*{-1.2em}\textbf{Acknowledgment.}\hspace{0.7em}}{}
 {\hspace*{-1.2em}\textbf{Notation.}\hspace{0.7em}}{}
\title[MV polytopes and crystal basis in type $A$]
{Mirkovi\'c-Vilonen Polytopes and a quiver constructuion of crystal basis in
type $A$}
\author[Y. Saito]{Yoshihisa Saito}
\address{Graduate School of Mathematical Sciences, 
University of Tokyo, 3-8-1 Komaba,
Meguro-ku, Tokyo 153-8914, Japan.}
\email{yosihisa@ms.u-tokyo.ac.jp}
\keywords{Crystal basis, PBW basis, MV polytopes}
\thanks{{\it Mathematics Subject Classification} (2010):
Primary 17B37; Secondary 17B67, 81R10, 81R50.\\
\hspace*{4mm}Reseach of the author is supported by Grant-in-Aid for Scientific 
Resaerch (C) 20540009, Japan Society for the Promontion Science.}
\begin{document}
\bigskip
\begin{abstract}
In the current paper, we give a quiver theoretical interpretation of 
Mirkovi\'c-Vilonen polytopes in type $A_n$. As a by-product, we give a new 
proof of the
Anderson-Mirkovi\'c conjecture which describes the explicit forms of 
the actions of lowering Kashiwara operators on the set of Mirkovi\'c-Vilonen 
polytopes.
%
%
%
\end{abstract}
\maketitle

\section{Introduction}
\subsection{}
Let $B(\infty)$ be the crystal basis of the negative half of the quantum 
universal enveloping algebra associated with a symmetrizable Kac-Moody Lie
algebra $\gtg$. Each element $b\in B(\infty)$ can be written as 
$$b=\tf_{i_1}\tf_{i_2}\cdots\tf_{i_k}b_{\infty}.$$
Here $\tf_i~(i\in I)$ is a lowering Kashiwara operator and $b_{\infty}$ the
highest weight element of $B(\infty)$. However, for a given $b\in B(\infty)$,
the above expression is not unique. For example, the following equality holds
in the case of $\gtg=\gtsl_3$:
$$\tf_1^m\tf_2^{m+n}\tf_1^nb_{\infty}=\tf_2^m\tf_1^{m+n}\tf_2^nb_{\infty}\qquad
(\mbox{for any }m,n\in\nz_{\geq 0}).$$
Therefore, in the study of $B(\infty)$, it is important to give 
\begin{itemize}
\item a parametrization of each element $b\in B(\infty)$ (we call it a 
{\it realization} of $B(\infty)$) and 
\item explicit identifications between several realizations.
\end{itemize}
Until now, many useful realizations of $B(\infty)$ are known. For example, 
\begin{itemize}
\item[(a)] a realization in terms of Lakshimibai-Seshadri path (Littelmann 
\cite{Li}),  
\item[(b)] a polyhedral realization (Nakashima and Zelevinsky \cite{NZ}),
\item[(c)] a Lagrangian (or quiver) realization 
(Kashiwara and the author \cite{KS}), 
{\it etc}.
\end{itemize}
These realizations work for arbitrary symmetrizable Kac-Moody Lie algebras.
On the other hand, for the case that $\gtg$ is a finite dimensional simple
Lie algebras, there is
\begin{itemize}
\item[(d)] a realization by using the theory of PBW basis 
(Lusztig \cite{L1}, the author \cite{S}).  
\end{itemize}
\subsection{}
Recently, Kamnitzer \cite{Kam1}, \cite{Kam2} gave a new realization of 
$B(\infty)$ for the case that $\gtg$ is a finite dimensional simple Lie 
algebras. 

Let us explain a background of his work. In several years ago,  
Mirkovi\'c and Vilonen introduced
a new family of algebraic cycles (called {\it Mirkovi\'c-Vilonen cycles}) in
the corresponding affine Grassmannian \cite{MV1}, \cite{MV2}. Furthermore, 
Breverman and Gaitsgory
\cite{BG} showed that a certain set of Mirkovi\'c-Vilonen cycles has a crystal 
structure which is isomorphic to the crystal basis of an irreducible highest 
weight $U_q(\gtg^{\vee})$-module, where $\gtg^{\vee}$ is the Langlands dual of 
$\gtg$.
By taking the moment map image of Mirkovi\'c-Vilonen cycles, Anderson \cite{A} 
defined a family of convex polytopes in $\gth_{\snr}$, which are called 
{\it Mirkovi\'c-Vilonen}
({\it MV for short}) {\it polytopes}. Here $\gth$ is the Cartan subalgebra of
$\gtg$ and $\gth_{\snr}$ the real form of $\gth$.

After these works, Kamnitzer \cite{Kam1}, 
\cite{Kam2} gave a combinatorial characterization of MV polytopes by using  
a notion of {\it Berenstein-Zelevinsky} ({\it BZ for short}) {\it data}. 
A BZ datum is a certain family of integers (see Section 5 for details), which 
is introduced by Berenstein, Fomin and Zelevinsky \cite{BFZ}. 
Moreover he showed that the set of MV polytopes has a crystal structure which 
is isomorphic to $B(\infty)$. That is, he constructed 
\begin{itemize}
\item[(e)] a realization of $B(\infty)$ in terms of MV polytopes.
\end{itemize}
We remark that he also proved that the above crystal structure on the set
of MV polytopes coincides with the crystal structure coming from one on the
set of Mirkovi\'c-Vilonen cycles, which is introduced in \cite{BG}.
\subsection{}
In the current paper, we focus on the case $\gtg=\gtsl_{n+1}(\nc)$. 
The aim of the paper is to give explicit isomorphisms between the three 
realizations (c), (d) and (e) of $B(\infty)$. In a process for constructing
these isomorphisms, we can also give a quiver theoretical description of 
MV polytopes (or BZ data) in type $A_n$.  
 
As a by-product, we give a new proof of the Anderson-Mirkovi\'c (AM for short)
conjecture. The AM conjecture is a conjecture on the explicit forms of 
the actions of lowering Kashiwara operators on the set of MV polytopes, which
is conjectured by Anderson and Mirkovi\'c (unpublished) and proved by Kamnitzer
\cite{Kam2} (see Theorem \ref{thm:AM}). 
\subsection{}
This paper is organized as follows. In Section 2, we give a quick review on 
the theory of crystals. After recalling basic properties of PBW basis of
the negative half of quantum enveloping algebras in Section 3, we introduce
a crystal structure on PBW basis in Section 4 (see Theorem \ref{thm:PBW}).  
We remark that this is just a reformulation of the result of Reineke \cite{Re}.
In Section 5, after reviewing some of basic facts on MV polytopes following
Kamnitzer \cite{Kam1}, \cite{Kam2}, we construct an isomorphism from
a parametrizing set of PBW basis (so-called Lusztig
data) to the set of MV polytopes in explicit way (see Theorem
\ref{prop:BZ} which will be proved in Section 7). 
In other words, this isomorphism tells us an explicit relation 
between the realization (d) of $B(\infty)$ and (e). In Section 6, we give a 
quiver theoretical interpretation of a BZ datum in type $A$ (see Corollary 
\ref{cor:BZ}). In this consideration, the work of Berenstein, Fomin and 
Zelevinsky 
\cite{BFZ} plays an important role. 
In Section 7, as we mentioned above, we give a proof of 
Theorem \ref{prop:BZ}. In the first half of this section, we give a
short review on a Lagrangian construction of $B(\infty)$, following Kashiwara
and the author \cite{KS}. This is just the realization (c). Since the explicit
isomorphism between the realization (c) and (d) is already known, the problem
can be translated as follows: ``prove that the induced map form the realization
(c) to (e) is an isomorphism of crystals''.  In the second half, we prove
this problem by using the results of Section 6 and quiver theoretical 
considerations. Finally, in Section 8, we give a new proof of the AM 
conjecture in type $A$, as an application of the previous results. 
\subsection{}
Very recently, another quiver theoretical interpretation of BZ data was given 
by Baumann, Kamnitzer and Sadanand (\cite{KamS} for type $A$, and 
\cite{BK} for type $A,D,E$). They gave similar results as our article
(for example, see Theorem 21 in \cite{BK}).
 But their approach is different from ours. Indeed, in their interpretation,
they use the representation theory of preprojective algebras 
(in other words, the double quiver of Dynkin type with certain relations).
On the other hand, in our construction, we only use the ordinary Dynkin 
quiver. In addition to that, as we already mentioned above, we focus only on 
type $A$. By this restriction,
we can get an explicit formula for computing each BZ datum in terms of the 
realization (d) of $B(\infty)$ in type $A$. Consequently, we also have 
a new proof of the AM conjecture in type $A$. Moreover, our approach can be
generalized in affine type $A$ ({\it cf}. \cite{NSS1}, \cite{NSS2}). In other
words, this article is the first step for the above generalization.

\medskip
\begin{Ac}
The author is grateful to Professor Saburo Kakei, Professor Yoshi-\\
yuki Kimura,
Professor Satoshi Naito, Professor Daisuke Sagaki and Professor Yoshihiro 
Takeyama for valuable discussions. 
The author also would like to thank to 
Professor Pierre Baumann
and Professor Joel Kamnitzer for valuable comments on the earlier draft.
\end{Ac}
\section{Preliminaries}
\subsection{Notations}
In this article, we assume $\gtg=\gtsl_{n+1}(\nc)$. Let $\gth$ be the Cartan 
subalgebra of $\gtg$. 
We denote by $\alpha_i\in \gth^*~(i\in I=\{1,2,\cdots,n\})$ the simple roots 
of $\gtg$, and $h_i\in \gth~(i\in I)$ the simple coroot of $\gtg$; note that
$\langle h_i,\alpha_j\rangle=a_{ij}$ for $i,j\in I$, where $\langle \cdot,
\cdot \rangle$ denotes the canonical pairing between $\gth$ and $\gth^*$, 
$(a_{ij})_{i,j\in I}$ the Cartan matrix of type $A_n$. We denote by $P$, $Q$ and 
$\Delta^+$ the weight lattice, the root lattice and the set of all positive 
roots, respectively. Let $W=\mathfrak{S}_{n+1}$
be the Weyl group of $\gtg$. It is generated by simple reflections 
$s_i=s_{\alpha_i}~(i\in I)$. Let $e$ and $w_0$ be the unit element and the 
longest element of $W$, respectively.

Let $U_q=U_q(\gtg)$ be the quantized universal enveloping algebra of type 
$A_n$ with generators $e_i$, $f_i$, $t_i^{\pm}$ $(i\in I)$. 
It is an associative algebra over $\nq(q)$. Let $U_q^-$ be the subalgebra
of $U_q$ generated by $f_i~(i\in I)$.
Define $[l]=\frac{q^l-q^{-l}}{q-q^{-1}}$ and $[k]!=\prod_{l=1}^k[l]$. For 
$x\in U_q(\gtg)$, we denote $x^{(k)}=x^k/[k]!$.
\subsection{Crystals}
\begin{defn}\label{defn:crystal} {\em (1)}
Consider the following data :
\begin{itemize}
\item[(i)] a set $B$,
\item[(ii)] a map $\mbox{\em wt}:B\to P$,
\item[(iii)] maps $\eps_i:B\to \nz\sqcup \{-\infty\}$, 
$\varphi_i:B\to \nz\sqcup \{-\infty\}$ $(i\in I)$,
\item[(iv)] maps $\te_i:B\to B\sqcup \{0\}$, $\tf_i:B\to B\sqcup \{0\}$ 
$(i\in I)$.
\end{itemize}
The sixtuple $(B;\mbox{\em wt},\eps_i,\varphi_i,\te_i,\tf_i)$ {\em ({\em  
denoted by $B$, for short})} is called a crystal if it satisfies
the following axioms: 
\begin{itemize}
\item[(C1)] $\varphi_i(b)=\eps_i(b)+\langle h_i,{\mbox{\em wt}}(b)\rangle.$
\item[(C2)] If $b\in B$ and $\te_ib\in B$, then ${\mbox{\em wt}}(\te_i b)=
{\mbox{\em wt}}(b)+\alpha_i$, $\eps_i(\te_ib)=\eps_i(b)-1$, 
$\varphi_i(\te_ib)=\varphi_i(b)+1$.
\item[(C2')] If $b\in B$ and $\tf_ib\in B$, then ${\mbox{\em wt}}(\tf_i b)=
{\mbox{\em wt}}(b)-\alpha_i$, $\eps_i(\tf_ib)=\eps_i(b)+1$, 
$\varphi_i(\tf_ib)=\varphi_i(b)-1$.
\item[(C3)] For $b,b'\in B$, $b'=\te_i b$ if and only if $b=\tf_i b'$.
\item[(C4)] For $b\in B$, if $\varphi_i(b)=-\infty$, then $\te_ib=\tf_ib=0$.
\end{itemize}
{\em (2)} For two crystals $B_1$ and $B_2$, a morphism $\psi$ from $B_1$ to
$B_2$ is a map $B_1\sqcup \{0\}\to B_2\sqcup\{0\}$ that satisfies the following
conditions:
\begin{itemize}
\item[(i)] $\psi(0)=0$.
\item[(ii)] If $b\in B_1$ and $\psi(b)\in B_2$, then 
${\mbox{\em wt}}(\psi(b))={\mbox{\em wt}}(b)$, $\eps_i(\psi(b))=\eps_i(b)$ and
$\varphi_i(\psi(b))=\varphi_i(b)$.
\item[(iii)] If $b,b'\in B_1$ satisfy $b'=\tf_i(b)$ and $\varphi(b),
\varphi(b')\in B_2$, then $\psi(b')=\tf_i(\psi(b))$.
\end{itemize} 
A morphism $\psi:B_1\to B_2$ is called an isomorphism, if $\psi$ induces an
bijective map $B_1\sqcup \{0\}\to B_2\sqcup\{0\}$ and it commutes with all
$\te_i$ and $\tf_i$. 
\end{defn} 
\subsection{Crystal basis of $U_q^-$}
We shall recall the definition of the crystal basis of $U_q^-$. 
Let $e_i'$ and $e_i''$ be endomorphisms of $U_q^-$ defined by
$$[e_i,x]=\frac{t_ie_i''(x)-t_i^{-1}e_i'(x)}{q-q^{-1}}\quad(x\in U_q^-).$$
It is known that any element $x\in U_q^-$ can be uniquely written as
$$x=\sum_{k\geq 0}f_i^{(k)}x_k\quad\mbox{with }e_i'(x_k)=0.$$
Define modified root operators (so-called Kashiwara operators) 
$\widetilde{e}_i$ and $\widetilde{f}_i$ on $U_q^-$ by
$$\widetilde{e}_ix=\sum_{k\geq 1}f_i^{(k-1)}x_k,\qquad
\widetilde{f}_ix=\sum_{k\geq 0}f_i^{(k+1)}x_k.$$
Let $\mathcal{A}$ be the subring of $\nq(q)$ consisting of rational
functions without a pole at $q=0$. Set
$$L(\infty)=\sum_{k\geq 0,~i_1,\cdots,i_k\in I}\mathcal{A}
\widetilde{f}_{i_1}\cdots \widetilde{f}_{i_k}\cdot 1\subset U_q^-,$$
$$B(\infty)=\left\{\left.\widetilde{f}_{i_1}\cdots \widetilde{f}_{i_k}\cdot 1
\mbox{ mod }qL(\infty)~\right|~k\geq 0,~i_1,\cdots,i_k\in I\right\}.$$
Then the following properties hold:
\vskip 1mm
\noindent
(1) $\widetilde{e}_iL(\infty)\subset L(\infty)$ and 
$\widetilde{f}_iL(\infty)\subset L(\infty)$,
\vskip 1mm
\noindent
(2) $B(\infty)$ is a $\nq$-basis of $L(\infty)/qL(\infty)$,
\vskip 1mm
\noindent
(3) $\widetilde{e}_iB(\infty)\subset B(\infty)\cup \{0\}$ and 
$\widetilde{f}_iB(\infty)\subset B(\infty)$.
\vskip 1mm
\noindent
We call $(L(\infty),B(\infty))$ the crystal basis of $U_q^-$. \\

For $b\in B(\infty)$, we set
$$\mbox{wt}(b)=\mbox{ the weight of $b$},\quad
\eps_i(b)=\mbox{max}\{k\geq 0~|~\te_i^k(b)\ne 0\},\quad
\vphi_i(b)=\eps_i(b)+\langle h_i,\mbox{wt}(b)\rangle.$$
Then $(B(\infty),\mbox{\rm wt},\eps_i,\vphi_i,\te_i,\tf_i)$ is a crystal in the
sense of Definition \ref{defn:crystal}.
\subsection{Orderings on the set of positive roots}
Since $\gtg=\gtsl_{n+1}(\nc)$, any positive root $\beta\in\Delta^+$ can be 
uniquely written as
$$\beta=\sum_{p=i_{\beta}}^{j_{\beta}-1}\alpha_p\quad(\mbox{for some 
$1\leq i_{\beta}<j_{\beta}\leq n+1$}).$$
The correspondence $\beta\mapsto (i_{\beta},j_{\beta})$ defines a bijection 
$\Delta^+\overset{\sim}{\to} \Pi$ where
$$\Pi=\{(i,j)~|~1\leq i<j\leq n+1\}.$$
In the rest of this article, we sometimes identify $\Pi$ with $\Delta^+$ via 
the above bijection.\\

Let $N=n(n+1)/2$ be the length of $w_0$ and fix
$w_0=s_{i_1}s_{i_2}\cdots s_{i_N}~(i_1,i_2,\cdots,i_N\in I)$
a reduced expression of $w_0$. We denote by
${\bf i}=(i_1,i_2,\cdots,i_N)$
the corresponding reduced word. Set
$\beta_k=s_{i_1}s_{i_2}\cdots s_{i_{k-1}}(\alpha_{i_k})~(1\leq k\leq N).$
Then we have $\Pi=\{\beta_1,\beta_2,\cdots,\beta_N\}.$
That is, a reduced word ${\bf i}$ defines
a bijection $\Upsilon_{\bf i}:
\{1,2,\cdots,N\}\overset{\sim}{\to}\Pi$ and it induces
a total ordering $\leq_{~{\bf i}}$ on $\Pi$; 
$$(i_1,j_1)\leq_{~{\bf i}} (i_2,j_2)\quad
\overset{\mbox{def}}{\Longleftrightarrow}\quad
\Upsilon_{\bf i}^{-1}(i_1,j_1)\leq \Upsilon_{\bf i}^{-1} (i_2,j_2).$$ 
%
%
%
%
\begin{ex}{\rm
Let ${\bf i}_0$ be the lexicographically minimal reduced word given by
$${\bf i}_0=(1,2,1,3,2,1,\cdots,n,n-1,\cdots,1).$$
Then we have
$$(i_1,j_1)\leq_{{\bf i}_0} (i_2,j_2)\quad\mbox{if and only if}\quad
\left\{\begin{array}{ll}
j_1<j_2\\
\mbox{or}\\
j_1=j_2~\mbox{and}~i_1\leq i_2.
\end{array}\right.$$
}\end{ex}
\section{PBW basis and Lusztig data associated to reduced words}
\subsection{PBW basis of quantized universal enveloping algebras}
For $i\in I$, introduce an 
$\nq(q)$-algebra automorphism $T_i$ of $U_q$ as :
$$T_i(e_j):=\left\{\begin{array}{ll}
-f_it_i & (i=j),\\
\sum_{k=0}^{-a_{ij}}(-1)^{a_{ij}+k}q^{a_{ij}+k}e_i^{(k)}e_je_i^{(-a_{ij}-k)} & 
(i\ne j),
\end{array}\right.$$
$$T_i(f_j)=\left\{\begin{array}{ll}
-t_i^{-1}e_i, & (i=j),\\
\sum_{k=0}^{-a_{ij}}(-1)^{a_{ij}+k}q^{-a_{ij}-k}f_i^{(-a_{ij}-k)}f_jf_i^{(k)},& 
(i\ne j),
\end{array}
\right.$$
$$T_i(t_j)=t_jt_i^{-a_{ij}}.$$
It is known that these operators satisfy the braid relations:
$$T_iT_jT_i=T_jT_iT_j~(a_{ij}=-1),\qquad T_iT_j=T_jT_i~(a_{ij}=0).$$

For a reduced word ${\bf i}=(i_1,i_2,\cdots,i_N)$ 
and ${\bf c}=(c_1,\cdots,c_N)\in\nz_{\geq 0}^N$, we define
$$P_{\bf i}({\bf c})=f_{i_1}^{(c_1)}
\left(T_{i_1}(f_{i_1}^{(c_2)})\right)\cdots 
\left(T_{i_1}T_{i_2}\cdots 
T_{i_{N-1}}(f_{i_N}^{(c_N)})\right)$$
and 
$$B_{\bf i}=\left\{P_{\bf i}({\bf c})~|~{\bf c}\in \nz_{\geq 0}^N\right\},
\qquad L_{\bf i}=\sum_{{\bf c}}\mathcal{A}P_{\bf i}({\bf c}).$$

\begin{prop}[\cite{L1},\cite{S}]
{\rm (1)} $B_{\bf i}$ is a $\nq(q)$-basis of $U_q^-$.
\vskip 1mm
\noindent
{\rm (2)} $L_{\bf i}=L(\infty)$.
Moreover $B_{\bf i}$ is a free $\mathcal{A}$-basis of $L(\infty)$.
\vskip 1mm
\noindent
{\rm (3)} $B_{\bf i}\equiv B(\infty)~\mbox{mod}~qL(\infty)$. 
\end{prop}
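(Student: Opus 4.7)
The plan is to prove $(1)$, $(2)$, $(3)$ simultaneously by induction on $N=\ell(w_0)$, exploiting the ``peeling'' identity
\[
P_{\bf i}({\bf c}) \;=\; f_{i_1}^{(c_1)}\,T_{i_1}\!\bigl(P_{{\bf i}'}({\bf c}')\bigr),
\]
where ${\bf i}'=(i_2,\ldots,i_N)$ (a reduced word for $s_{i_1}w_0$ viewed in the Levi associated to $I\setminus\{i_1\}$ after twisting by $T_{i_1}$) and ${\bf c}'=(c_2,\ldots,c_N)$. First one checks that each root vector $F_{\beta_k}^{\bf i}:=T_{i_1}\cdots T_{i_{k-1}}(f_{i_k})$ actually lies in $U_q^-$, by induction on $k$ using the explicit braid-group formulas for $T_i(f_j)$. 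The monomial $P_{\bf i}({\bf c})$ is then simply the ordered product of divided powers of these root vectors.

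For $(1)$, I would invoke the Levendorskii--Soibelman-type straightening relations
\[
F_{\beta_\ell}^{\bf i}\,F_{\beta_k}^{\bf i}
\;=\; q^{-(\beta_k,\beta_\ell)}\,F_{\beta_k}^{\bf i}\,F_{\beta_\ell}^{\bf i}
\;+\; \textrm{(lower terms in }F_{\beta_{k+1}}^{\bf i},\ldots,F_{\beta_{\ell-1}}^{\bf i}\textrm{)},
\]
which show that any ordered product of $f_i$'s rewrites as a $\nq(q)$-linear combination of the $P_{\bf i}({\bf c})$; hence $B_{\bf i}$ spans $U_q^-$. Linear independence follows from a weight-space count: on each weight space the number of PBW monomials of that weight equals the Kostant partition number, which matches $\dimc U_q^-[\mu]$ by specialization $q\to 1$.

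Part $(2)$ is the technical heart. The inclusion $L_{\bf i}\subseteq L(\infty)$ is obtained by induction on $N$ from the peeling identity combined with Kashiwara's decompositions
\[
U_q^- \;=\; \bigoplus_{k\geq 0} f_{i_1}^{(k)}\ker(e_{i_1}'),
\qquad
L(\infty) \;=\; \bigoplus_{k\geq 0} f_{i_1}^{(k)}\bigl(L(\infty)\cap\ker e_{i_1}'\bigr),
\]
together with the key reflection property that $T_{i_1}$ carries the crystal lattice attached to the shorter word ${\bf i}'$ into $L(\infty)\cap\ker(e_{i_1}')$. Given this, the recursion forces $P_{\bf i}({\bf c})\in L(\infty)$. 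Equality $L_{\bf i}=L(\infty)$ and freeness of $B_{\bf i}$ over $\mathcal A$ then drop out from the rank count provided by $(1)$.

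Part $(3)$ follows from the same induction: working modulo $qL(\infty)$, the peeling identity gives
\[
P_{\bf i}({\bf c}) \;\equiv\; \tf_{i_1}^{c_1}\,\tf_{i_2}^{d_2}\cdots \tf_{i_N}^{d_N}\cdot 1 \pmod{qL(\infty)},
\]
with exponents $d_j$ produced by the induction hypothesis applied to ${\bf i}'$, so each class $P_{\bf i}({\bf c})\bmod qL(\infty)$ lies in $B(\infty)$. The main obstacle throughout is the reflection step of $(2)$: one must show that the braid operator $T_{i_1}$, restricted to the ``highest weight for $i_1$'' sector $\ker(e_{i_1}')$, preserves the integral crystal lattice produced from the sub-word ${\bf i}'$, and moreover that $f_{i_1}^{(c_1)}$ coincides with $\tf_{i_1}^{c_1}$ modulo $qL(\infty)$ on this sector. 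This is precisely the refinement supplied by Saito's treatment in \cite{S}; once it is in place, the remainder reduces to a clean induction plus weight-space dimension matching.
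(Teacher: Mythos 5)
The paper offers no proof of this proposition: it is quoted verbatim from Lusztig \cite{L1} and Saito \cite{S}, so the only meaningful comparison is with the arguments in those references. Your outline is in fact a faithful reconstruction of that standard argument --- the peeling identity $P_{\bf i}({\bf c})=f_{i_1}^{(c_1)}T_{i_1}(P_{{\bf i}'}({\bf c}'))$, the decomposition $L(\infty)=\bigoplus_k f_{i_1}^{(k)}(L(\infty)\cap\ker e_{i_1}')$, and the compatibility of $T_{i_1}$ with the crystal lattice on the $e_{i_1}'$-highest sector are exactly the ingredients of \cite{S}, and you correctly isolate the reflection step as the technical heart. So the architecture is right.

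Three points deserve correction or caution. First, ${\bf i}'=(i_2,\dots,i_N)$ is a reduced word for $s_{i_1}w_0$ in the \emph{full} Weyl group, not a word in the Levi attached to $I\setminus\{i_1\}$; the induction must therefore be set up for PBW monomials $P_{\bf i_w}({\bf c})$ attached to arbitrary $w\in W$ (with the corresponding sub-lattice and subset of $B(\infty)$), which is how \cite{S} actually runs it. Second, the displayed congruence in part (3) is not what the induction produces: after the reflection step, $T_{i_1}(P_{{\bf i}'}({\bf c}'))$ is congruent to \emph{some} element $b''\in B(\infty)$ with $\eps_{i_1}(b'')=0$, and the Saito reflection does not send $\tf_{i_2}^{d_2}\cdots\tf_{i_N}^{d_N}\cdot 1$ to that same monomial string; the correct conclusion is only that $P_{\bf i}({\bf c})\equiv\tf_{i_1}^{c_1}b''$ lies in $B(\infty)\bmod qL(\infty)$, which suffices (and surjectivity/injectivity onto $B(\infty)$ then follows from the free-basis statement in (2)). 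Third, in part (1) there is a latent circularity: the Levendorskii--Soibelman straightening relations are usually \emph{derived from} the PBW theorem, so if you invoke them to prove spanning you must either cite an independent proof of them or replace the spanning argument by Lusztig's original one (triangularity of the braid-twisted generators with respect to a suitable filtration, plus the weight-space count you already give). None of these affects the overall viability of the strategy, but as written the proof of (1) is not self-supporting and the displayed formula in (3) is false as an identity.
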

\begin{defn}
For a giving reduced word ${\bf i}$, the basis 
$B_{\bf i}=\left\{P_{\bf i}({\bf c})~|~{\bf c}\in \nz_{\geq 0}^N\right\}$ is 
called {\it the PBW basis of $U_q^-$ associated to a reduced word ${\bf i}$}.
There is a bijection $\Xi_{{\bf i}}:\nz_{\geq 0}^N\overset{\sim}{\to} B(\infty)$
defined by
${\bf c}\mapsto P_{\bf i}({\bf c}) ~\mbox{mod}~qL(\infty)$. For 
$b\in B(\infty)$, we call $\Xi_{{\bf i}}^{-1}(b)\in \nz_{\geq 0}^N$ the
${\bf i}$-Lusztig datum of $b\in B(\infty)$.
\end{defn}
\subsection{The transition maps}
For a reduced word ${\bf i}$, consider
$$\mathcal{B}^{\bf i}=
\left\{{\bf a^{\bf i}}=(a_{i,j}^{\bf i})_{(i,j)\in\Pi}~|~
a_{i,j}^{\bf i}\in \nz_{\geq 0}\mbox{ for any }(i,j)\in\Pi\right\}$$
the set of all $N$-tuples of non-negative integers indexed by $\Pi$. 
From now on, we regard $\cB^{\bf i}$ as the set of all 
${\bf i}$-Lusztig data via the bijection 
$\nz_{\geq 0}^N\overset{\sim}{\to} \cB^{\bf i}$ induced from 
$\Upsilon_{\bf i}:\{1,\cdots,N\}\overset{\sim}{\to}\Pi$. 
For two reduced words ${\bf i}$ and ${\bf i}'$, let us consider the following
composition of the bijections:
$$R_{\bf i}^{{\bf i}'}=\Xi_{{\bf i}'}^{-1}\circ\Xi_{\bf i}:
\cB^{\bf i}\overset{\sim}{\to}B(\infty)
\overset{\sim}{\to}\cB^{{\bf i}'}.$$
We call $R_{\bf i}^{{\bf i}'}$ the {\it transition map form ${\bf i}$ to 
${\bf i}'$}. The explicit form of $R_{\bf i}^{{\bf i}'}$ is known (\cite{BFZ}),
but we omit to give it in this article. 

For an arbitrarily reduced word ${\bf i}$, the set of all ${\bf i}$-Lusztig 
data $\cB^{\bf i}$ has a crystal structure
which is induced form the bijection $\Xi_{{\bf i}}:\cB^{\bf i}
\overset{\sim}{\to} B(\infty)$. 
Especially, for the lexicographically minimal reduced word ${\bf i}_0$, 
we denote $\cB=\cB^{{\bf i}_0}$ and ${\bf a}={\bf a}^{{\bf i}_0}\in \cB$.
It has a central role in this article.
In the next section we will give the crystal structure on $\cB=\cB^{{\bf i}_0}$ 
in explicit way.
\subsection{$\ast$-structure}
Define a $\nq(q)$-algebra anti-involution $\ast$ of $U_q$ by
$$e_i^*=e_i,\quad f_i^*=f_i,\quad (t_i^{\pm})^*=t_i^{\mp}\qquad (i\in I).$$
By the construction, it is easy to see $L(\infty)^*=L(\infty)$. Therefore
$\ast$ induces a $\nq$-linear automorphism of $L(\infty)/qL(\infty)$. Moreover
the following theorem is known.
\begin{thm}[\cite{K3}]\label{thm:ast}
The anti-involution $\ast$ induces an involutive bijection on $B(\infty)$.
\end{thm}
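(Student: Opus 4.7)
The plan is to promote the anti-involution $\ast$ to an operator on the crystal lattice $L(\infty)$ by comparing it with a second system of Kashiwara-type operators obtained by conjugating the original ones by $\ast$. Since $\ast$ is a $\nq(q)$-algebra anti-involution with $f_i^\ast=f_i$, it preserves $U_q^-$ and is $\mathcal{A}$-linear; the issue is purely that the crystal lattice $L(\infty)$ is constructed using $\tf_i$ acting on the \emph{left}, while $\ast$ will transport it to an action on the \emph{right}.

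First I would introduce the starred operators $\te_i^\ast:=\ast\circ\te_i\circ\ast$ and $\tf_i^\ast:=\ast\circ\tf_i\circ\ast$ on $U_q^-$. A short computation with the defining identity $[e_i,x]=(t_ie_i''(x)-t_i^{-1}e_i'(x))/(q-q^{-1})$ shows that $\ast\circ e_i'\circ\ast=e_i''$, so $\tf_i^\ast$ is obtained from the \emph{right} unique expansion $x=\sum_{k\ge0}x_k f_i^{(k)}$ with $e_i''(x_k)=0$ (and similarly for $\te_i^\ast$). Thus the starred operators are genuinely new operators, not equal to $\te_i,\tf_i$, but they are of the same nature.

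The heart of the matter is to prove, by a simultaneous induction on the weight, the following two statements:
\begin{itemize}
\item[(i)] $\tf_i^\ast L(\infty)\subset L(\infty)$ and $\te_i^\ast L(\infty)\subset L(\infty)$;
\item[(ii)] modulo $qL(\infty)$, the operators $\tf_i$ and $\tf_j^\ast$ satisfy the commutation/compatibility relations that one would expect from a two-sided crystal structure, namely $\te_i^\ast\tf_j=\tf_j\te_i^\ast$ when $i\ne j$, and an analogous controlled relation when $i=j$.
\end{itemize}
This is exactly the Kashiwara grand-loop type argument, and it is the main obstacle: one must carefully propagate the induction hypothesis between (i) and (ii), using that $L(\infty)$ is generated over $\mathcal{A}$ by the monomials $\tf_{i_1}\cdots\tf_{i_k}\cdot 1$, so that stability under $\tf_j^\ast$ reduces to the mixed commutation in (ii). I expect all the combinatorial subtleties of the proof to live at this step.

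Granting (i) and (ii), the theorem follows quickly. From (i), $\ast$ restricts to an $\mathcal{A}$-linear involution of $L(\infty)$, and hence descends to a $\nq$-linear involution $\ast$ of $L(\infty)/qL(\infty)$. Since $b_\infty=1\mod qL(\infty)$ satisfies $b_\infty^\ast=b_\infty$, and since every $b\in B(\infty)$ has the form $b=\tf_{i_1}\cdots\tf_{i_k}b_\infty\mod qL(\infty)$, we get $b^\ast=\tf_{i_1}^\ast\cdots\tf_{i_k}^\ast b_\infty$, which by (i) lies in $L(\infty)/qL(\infty)$, and by (ii) (which, read at the level of $B(\infty)$, shows that $\tf_i^\ast$ maps $B(\infty)$ into $B(\infty)\sqcup\{0\}$) lies in $B(\infty)\sqcup\{0\}$. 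Non-vanishing of $b^\ast$ is automatic since $\ast$ is a bijection on $L(\infty)/qL(\infty)$, so $\ast(B(\infty))\subset B(\infty)$; applying $\ast$ once more and using $\ast^2=\mathrm{id}$ gives the desired involutive bijection on $B(\infty)$.
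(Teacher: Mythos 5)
You should first note that the paper offers no proof of this statement: it is imported wholesale from Kashiwara \cite{K3}, and the only thing the surrounding text adds is the (asserted) identity $L(\infty)^*=L(\infty)$. Your proposal is a faithful reconstruction of the strategy of the cited source: the identity $\ast\circ e_i'\circ\ast=e_i''$ is correct, the resulting description of $\tf_i^*$ and $\te_i^*$ through the right expansion $x=\sum_k x_kf_i^{(k)}$ with $e_i''(x_k)=0$ is correct, and the reduction of the theorem to your statements (i) and (ii) is sound. The problem is that (i) and (ii), together with the simultaneous induction that propagates them against each other, \emph{are} the proof: this is Kashiwara's grand loop, and it occupies the technical core of \cite{K1}, \cite{K3}. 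You have named the mountain and declined to climb it, so as a self-contained argument the proposal consists of correct preparation plus a deferred main lemma. Relative to the paper this is not a defect of approach --- the paper defers the same lemma by citation --- but you should be aware that nothing in your write-up actually establishes $\tf_i^*L(\infty)\subset L(\infty)$ or the mixed commutation relations.

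It is worth pointing out that in the finite-type setting of this paper a genuinely cheaper route is available, and it is the one the surrounding text is implicitly gesturing at. By Proposition 3.1.1 every PBW lattice $L_{\bf i}$ coincides with $L(\infty)$ and every PBW basis $B_{\bf i}$ projects onto $B(\infty)$ modulo $qL(\infty)$, for \emph{every} reduced word ${\bf i}$ of $w_0$; these facts are due to Lusztig and Saito and are independent of the $\ast$-theorem. On the other hand, since $T_i^*=\ast\circ T_i\circ\ast$ and the braid-group symmetries permute the PBW bases, the $\ast$-image $B_{\bf i}^*=\{P_{\bf i}^*({\bf c})\}$ of a PBW basis is, up to reindexing the exponents, the PBW basis attached to another reduced word (essentially ${\bf i}$ read backwards, twisted by $w_0$). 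Hence $L(\infty)^*=L_{{\bf i}}^{\,*}=L_{{\bf i}'}=L(\infty)$ and $B(\infty)^*\equiv B_{{\bf i}'}\equiv B(\infty)$ modulo $qL(\infty)$, which is the theorem, with no grand loop. This argument only works in finite type, whereas Kashiwara's applies to any symmetrizable Kac--Moody algebra; but for the purposes of this paper it would let you replace your unexecuted induction by results already quoted in Section 3.
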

By the above theorem, we can define the operators $\te_i^*$, $\tf_i^*$ on 
$B(\infty)$ by
$$\te_i^*=\ast \te_i \ast,\quad \tf_i^*=\ast \tf_i \ast.$$
By the definition, it is obvious that $b_{\infty}^*=b_{\infty}$. 
Here $b_{\infty}$ is the highest weight element of $B(\infty)$.
Therefore we have
$$\eps_i(b^*)=\mbox{max}\{k\geq 0~|~(\te_i^*)^k(b)\ne 0\}$$
for $b\in B(\infty)$. From now on, we denote $\eps_i^*(b)=\eps_i(b^*)$.\\

Define a $\nq(q)$-algebra automorphism $T_i^*~(i\in I)$ by
$$T_i^*=\ast\circ T_i\circ\ast.$$
Then we have
\begin{align*}
P_{\bf i}^*({\bf c})&=
\left(f_{i_1}^{(c_1)}\left(T_{i_1}(f_{i_1}^{(c_2)})\right)\cdots 
\left(T_{i_1}T_{i_2}\cdots 
T_{i_{N-1}}(f_{i_N}^{(c_N)})\right)\right)^*\\
&=\left(T_{i_1}^*T_{i_2}^*\cdots T_{i_{N-1}}^*(f_{i_N}^{(c_N)})\right)
\cdots \left(T_{i_1}^*(f_{i_1}^{(c_2)})\right)f_{i_1}.
\end{align*}
Set
$$B^*_{\bf i}=\left\{\left.P_{\bf i}^*({\bf c})~\right|
{\bf c}\in \nz_{\geq 0}^N\right\}.$$
It also gives a $\nq(q)$-basis of $U_q^-$. We call it the {\it $\ast$-PBW 
basis of $U_q^-$ associated to a reduced word ${\bf i}$}.\\
 
By Theorem \ref{thm:ast}, we have the following corollary.
\begin{cor}
$$\left\{\left.P_{\bf i}^*({\bf c})~\mbox{mod}~qL(\infty)~\right|
{\bf c}\in \nz_{\geq 0}^N\right\}=B(\infty).$$
\end{cor}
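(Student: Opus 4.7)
The corollary should follow quite directly from Theorem \ref{thm:ast}, so the plan is essentially to chase the $*$-involution through the reduction modulo $qL(\infty)$.

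First I would observe that, because $*$ is a $\nq(q)$-algebra anti-involution of $U_q$ and $q$ is fixed by $*$, it preserves the subring $\mathcal{A}\subset\nq(q)$. Combined with the already-noted fact that $L(\infty)^*=L(\infty)$, this means that $*$ sends any $\mathcal{A}$-basis of $L(\infty)$ to another $\mathcal{A}$-basis of $L(\infty)$. Applying this to the PBW basis $B_{\bf i}$, which is an $\mathcal{A}$-basis of $L(\infty)$ by Proposition above, shows that $B_{\bf i}^*=\{P_{\bf i}^*({\bf c})\mid{\bf c}\in\nz_{\geq 0}^N\}$ is likewise an $\mathcal{A}$-basis of $L(\infty)$, and in particular reduces modulo $qL(\infty)$ to a $\nq$-basis of $L(\infty)/qL(\infty)$.

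Next, since $*$ preserves $L(\infty)$ and $qL(\infty)$, it descends to a well-defined $\nq$-linear involution $\overline{*}$ on $L(\infty)/qL(\infty)$, and reduction mod $qL(\infty)$ intertwines the two actions: for every $x\in L(\infty)$ one has $x^*\bmod qL(\infty)=(x\bmod qL(\infty))^{\overline{*}}$. Applying this with $x=P_{\bf i}({\bf c})$ and using that $B_{\bf i}\equiv B(\infty)\bmod qL(\infty)$, I get
\[
\{P_{\bf i}^*({\bf c})\bmod qL(\infty)\mid{\bf c}\in\nz_{\geq 0}^N\}
=\{b^{\overline{*}}\mid b\in B(\infty)\}.
\]

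Finally, Theorem \ref{thm:ast} asserts exactly that $\overline{*}$ restricts to an (involutive) bijection of $B(\infty)$ onto itself, so the right-hand side above equals $B(\infty)$, which is the claim. There is no real obstacle here: the whole content of the corollary is the compatibility of $*$ with the crystal lattice, which has been packaged into Theorem \ref{thm:ast}; what remains is just bookkeeping to check that $*$ and reduction mod $qL(\infty)$ commute, and that $*$ sends the $\mathcal{A}$-basis $B_{\bf i}$ of $L(\infty)$ to another such basis.
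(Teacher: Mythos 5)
Your proposal is correct and follows exactly the route the paper intends: the paper offers no written proof beyond the phrase ``By Theorem \ref{thm:ast}, we have the following corollary,'' and your argument simply spells out the bookkeeping (that $\ast$ preserves $L(\infty)$ and $qL(\infty)$, hence commutes with reduction, and sends the basis $B_{\bf i}\equiv B(\infty)$ to $B(\infty)^{\ast}=B(\infty)$) that this phrase leaves implicit.
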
 
\section{Crystal structure on ${\bf i}_0$-Lusztig data}
\subsection{Definition of crystal structures}

We shall define two crystal structures on the set of all ${\bf i}_0$-Lusztig
datum $\cB$. For ${\bf a}\in \cB$, define the weight $\mbox{wt}({\bf a})$
of ${\bf a}$ by 
$$\mbox{wt}({\bf a})=-\sum_{i\in I}m_i\alpha_i,\quad\mbox{where}\quad
m_i=\sum_{k=1}^i\sum_{l=i+1}^{n+1}a_{k,l}\quad (i\in I).$$
For $i\in I$, set
$$A^{(i)}_k({\bf a})=\sum_{s=1}^k(a_{s,i+1}-a_{s-1,i})\quad (1\leq k\leq i),$$
$$A^{\ast (i)}_l({\bf a})=\sum_{t=l+1}^{n+1}(a_{i,t}-a_{i+1,t+1})\quad 
(i\leq l\leq n).$$
Here we set $a_{0,i}=0$ and $a_{i+1,n+2}=0$. Define 
$$\eps_i({\bf a})=
\mbox{max}\left\{A_1^{(i)}({\bf a}),\cdots,A_i^{(i)}({\bf a})\right\},
\quad\vphi_i({\bf a}) = \eps_i({\bf a})+\langle h_i,\mbox{wt}({\bf a})
\rangle,$$
$$\eps_i^*({\bf a})=\mbox{max}\left\{A^{\ast (i)}_i({\bf a}),\cdots,
A^{\ast (i)}_n({\bf a})\right\},\quad
\varphi_i^*({\bf a})=\eps_i^*({\bf a})+\langle h_i,\mbox{wt}({\bf a})
\rangle.$$
Let
$$k_e=\mbox{min}\left\{1\leq k\leq i\left|~\eps_i({\bf a})
=A_k^{(i)}({\bf a})\right.\right\},\quad
k_f=\mbox{max}\left\{1\leq k\leq i\left|~\eps_i({\bf a})
=A_k^{(i)}({\bf a})\right.\right\},$$
$$l_e=\mbox{max}\left\{i\leq l\leq n\left|~\eps_i^*({\bf a})
=A_l^{\ast (i)}({\bf a})\right.\right\},~
l_f=\mbox{min}\left\{i\leq l\leq n\left|~\eps_i^*({\bf a})
=A_l^{\ast (i)}({\bf a})\right.\right\}.$$
For a given ${\bf a}\in \cB$, we introduce four $N$-tuples of integers 
${\bf a}^{(p)}=\left(a_{k,l}^{(p)}\right)~(p=1,2,3,4)$ by 
\begin{align*}
{a}^{(1)}_{k,l}&=\left\{\begin{array}{ll}
a_{k_e,i}+1 & (k=k_e,~l=i),\\
a_{k_e,i+1}-1 & (k=k_e,~l=i+1),\\
a_{k,l} & (\mbox{otherwise}).
\end{array}\right.\\
{a}_{k,l}^{(2)}&=\left\{\begin{array}{ll}
a_{k_f,i}-1 & (k=k_f,~l=i),\\
a_{k_f,i+1}+1 & (k=k_f,~l=i+1),\\
a_{k,l} & (\mbox{otherwise}),
\end{array}\right.\\
{a}_{k,l}^{(3)}&=\left\{\begin{array}{ll}
a_{i,l_e+1}-1 & (k=i,~l=l_e+1),\\
a_{i+1,l_e+1}+1 & (k=i+1,~l=l_e+1),\\
a_{k,l} & (\mbox{otherwise}).
\end{array}\right.\\
{a}^{(4)}_{k,l}&=\left\{\begin{array}{ll}
a_{i,l_f+1}+1 & (k=i,~l=l_f+1),\\
a_{i+1,l_f+1}-1 & (k=i+1,~l=l_f+1),\\
a_{k,l} & (\mbox{otherwise}).
\end{array}\right.
\end{align*}

\begin{lemma}\label{lemma:well-def}
{\rm (1)} For any ${\bf a}\in \cB$ with $\eps_i({\bf a})>0$, ${\bf a}^{(1)}$
is an $N$-tuple of non-negative integers. In other words, ${\bf a}^{(1)}$ is
an element of $\cB$.\\ 
{\rm (2)} For any ${\bf a}\in \cB$ with $\eps_i^*({\bf a})>0$, ${\bf a}^{(3)}$
is an element of $\cB$.\\
{\rm (3)} For any ${\bf a}\in \cB$, both ${\bf a}^{(2)}$ and ${\bf a}^{(4)}$
are elements of $\cB$.
\end{lemma}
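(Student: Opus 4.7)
The plan is to reduce each part to a one-step comparison of consecutive values of the defining sequences $A_k^{(i)}({\bf a})$ and $A_l^{\ast(i)}({\bf a})$. In each case ${\bf a}^{(p)}$ differs from ${\bf a}$ in only two entries, and exactly one of them is decreased by $1$; it suffices to show that this entry is $\geq 1$. The key (routine) telescoping identities are
\[
A_k^{(i)}({\bf a})-A_{k-1}^{(i)}({\bf a})=a_{k,i+1}-a_{k-1,i},\qquad
A_{l-1}^{\ast(i)}({\bf a})-A_l^{\ast(i)}({\bf a})=a_{i,l}-a_{i+1,l+1}.
\]

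For (1), ${\bf a}^{(1)}$ decreases $a_{k_e,i+1}$, so I need $a_{k_e,i+1}\geq 1$. If $k_e=1$, then $A_1^{(i)}({\bf a})=a_{1,i+1}$ (using $a_{0,i}=0$) equals $\eps_i({\bf a})>0$. If $k_e\geq 2$, the \emph{minimality} of $k_e$ among the maximizers forces $A_{k_e-1}^{(i)}({\bf a})<A_{k_e}^{(i)}({\bf a})$, and the first telescoping identity yields $a_{k_e,i+1}>a_{k_e-1,i}\geq 0$, hence $\geq 1$. Part (3) is entirely symmetric: ${\bf a}^{(3)}$ decreases $a_{i,l_e+1}$; the boundary case $l_e=n$ gives $A_n^{\ast(i)}({\bf a})=a_{i,n+1}=\eps_i^\ast({\bf a})>0$, while for $l_e<n$ the \emph{maximality} of $l_e$ combined with the second telescoping identity gives $a_{i,l_e+1}>a_{i+1,l_e+2}\geq 0$.

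For (2) and (4) no positivity hypothesis is available, so one has to be slightly more careful. The operation ${\bf a}^{(2)}$ wants to decrease $a_{k_f,i}$, but when $k_f=i$ the index $(i,i)$ lies outside $\Pi$, so that piece of the piecewise definition is vacuous and only the increase $a_{i,i+1}\mapsto a_{i,i+1}+1$ takes effect. When $k_f<i$, the \emph{maximality} of $k_f$ gives $A_{k_f+1}^{(i)}({\bf a})<A_{k_f}^{(i)}({\bf a})$, and the first telescoping identity yields $a_{k_f,i}>a_{k_f+1,i+1}\geq 0$. The argument for ${\bf a}^{(4)}$ is the mirror image: the case $l_f=i$ is vacuous at the index $(i+1,i+1)\notin\Pi$, and for $l_f>i$ the \emph{minimality} of $l_f$ together with the second telescoping identity produces $a_{i+1,l_f+1}>a_{i,l_f}\geq 0$.

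I expect no serious obstacle; this is a bookkeeping lemma. The only point requiring care is the treatment of the boundary cases $k_e=i$, $k_f=i$, $l_e=n$, $l_f=i$, where one of the two edited positions falls outside $\Pi$ and must simply be ignored. Everything else follows mechanically from the extremal characterization of the indices $k_e,k_f,l_e,l_f$.
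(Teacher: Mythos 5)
Your argument is correct and coincides with the paper's: the paper writes out only part (1) (the split into $k_e=1$ versus $k_e>1$, using the telescoping identity $A_{k}^{(i)}({\bf a})-A_{k-1}^{(i)}({\bf a})=a_{k,i+1}-a_{k-1,i}$ and the minimality of $k_e$), exactly as you do, and dismisses the remaining cases as similar. Your explicit treatment of ${\bf a}^{(2)}$, ${\bf a}^{(3)}$, ${\bf a}^{(4)}$ via the extremality of $k_f,l_e,l_f$ --- including the observation that in the boundary cases $k_f=i$ and $l_f=i$ the decremented index falls outside $\Pi$ and the positivity requirement is vacuous --- is precisely the intended completion and is correct.
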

\begin{proof}
We only give a proof of (1). It is enough to show that $a_{k_e,i+1}>0$. 
If $k_e=1$, we have 
$a_{k_e,i+1}=A_{1}^{(i)}({\bf a})=\eps_i({\bf a})>0$. 
Assume $k_e>1$. Then, by the definition of $k_e$, we have 
$A_{k_e-1}^{(i)}({\bf a}) < A_{k_e}^{(i)}({\bf a})$.
Therefore $A_{k_e}^{(i)}({\bf a})-A_{k_e-1}^{(i)}({\bf a})
=a_{{k_e},i+1}-a_{{k_e-1},i}>0$. Since $a_{{k_e-1},i}\geq 0$, we have
$a_{{k_e},i+1}>0$.
\end{proof}
Now we define Kashiwara operators on $\cB$ as:
$$\te_i{\bf a}=\left\{\begin{array}{ll}
0&(\mbox{if }\eps_i({\bf a})=0),\\
{\bf a}^{(1)}&(\mbox{if }\eps_i({\bf a})>0),
\end{array}\right.\quad \tf_i{\bf a}={\bf a}^{(2)},$$
$$\te_i^*{\bf a}=\left\{\begin{array}{ll}
0&(\mbox{if }\eps_i^*({\bf a})=0),\\
{\bf a}^{(3)}&(\mbox{if }\eps_i^*({\bf a})>0),
\end{array}\right.\quad \tf_i^*{\bf a}={\bf a}^{(4)}.$$

\begin{prop}
{\rm (1)} $(\cB,\mbox{\rm wt},\eps_i,\vphi_i,\te_i,\tf_i)$ is a crystal 
in the sense of Definition \ref{defn:crystal}.\\
{\rm (2)} $(\cB,\mbox{\rm wt},\eps_i^*,\vphi_i^*,\te_i^*,\tf_i^*)$ is 
a crystal 
in the sense of Definition \ref{defn:crystal}.
\end{prop}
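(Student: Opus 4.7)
The plan is to check axioms (C1)--(C4) of Definition \ref{defn:crystal} directly from the explicit formulas. Axiom (C1) holds tautologically by the definitions of $\vphi_i$ and $\vphi_i^*$, and (C4) is vacuous: since $A_1^{(i)}({\bf a}) = a_{1,i+1}\geq 0$ and $A_n^{*(i)}({\bf a}) = a_{i,n+1}\geq 0$, both $\eps_i$ and $\eps_i^*$ take only values in $\nz_{\geq 0}$, so $\vphi_i$ and $\vphi_i^*$ never equal $-\infty$.

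For (C2), fix ${\bf a}\in\cB$ with $\eps_i({\bf a})>0$ and write $k_e = k_e({\bf a})$. The operator $\te_i$ modifies only the two entries $a_{k_e,i}$ and $a_{k_e,i+1}$ (incrementing and decrementing by one respectively), so a short sorting of cases in the formula for $m_j$ (over $j<k_e$, $k_e\leq j<i$, $j=i$, and $j>i$) shows that only $m_i$ changes, and by $-1$, yielding $\mbox{wt}(\te_i{\bf a}) = \mbox{wt}({\bf a})+\alpha_i$. Substituting the same change into $A_k^{(i)}$ gives
\[
A_k^{(i)}(\te_i{\bf a}) - A_k^{(i)}({\bf a}) = \begin{cases} 0 & (k<k_e),\\ -1 & (k=k_e),\\ -2 & (k>k_e), \end{cases}
\]
because $a_{k_e,i+1}$ enters as $+a_{s,i+1}$ precisely when $s=k_e\leq k$, while $a_{k_e,i}$ enters as $-a_{s-1,i}$ precisely when $s-1=k_e\leq k-1$. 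Since $k_e$ is by definition the \emph{smallest} index attaining the maximum, $A_k^{(i)}({\bf a})<\eps_i({\bf a})$ for $k<k_e$, and the table immediately yields $\eps_i(\te_i{\bf a})=\eps_i({\bf a})-1$ with the new maximum attained \emph{uniquely} at $k=k_e$. The $\vphi_i$ identity then follows from (C1). Axiom (C2') is proved in parallel with $k_e$ replaced by $k_f$ and signs reversed; the fact that $\tf_i{\bf a}\in\cB$ is Lemma \ref{lemma:well-def}(3).

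For (C3), the uniqueness noted above gives $k_f(\te_i{\bf a})=k_e({\bf a})$, so $\tf_i$ applied to $\te_i{\bf a}$ reverses the two-entry modification and returns ${\bf a}$. Conversely, $\tf_i$ leaves $A_k^{(i)}$ unchanged for $k<k_f({\bf a})$ (where the values stay strictly below $\eps_i({\bf a})+1$) and raises $A_{k_f}^{(i)}$ to $\eps_i({\bf a})+1$, so $k_e(\tf_i{\bf a})=k_f({\bf a})$ and $\te_i\tf_i{\bf a}={\bf a}$. Part (2) is handled by an entirely parallel calculation in which $A_l^{*(i)}$ replaces $A_k^{(i)}$ and the roles of minimum and maximum are swapped: $l_f$ (the minimum) mirrors $k_e$ and $l_e$ (the maximum) mirrors $k_f$, with the analogous change pattern $0/{-1}/{-2}$ arising for $l>l_e$, $l=l_e$, $l<l_e$ under $\te_i^*$.

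The main bookkeeping obstacle is establishing the displayed $0/{-1}/{-2}$ table (and its starred counterpart); once it is in hand, every crystal axiom reduces to reading off the table, and the mutual inversion of $\te_i$ and $\tf_i$ in (C3) is simply the statement that the transformed datum has a unique extremal index, equal to the index at which the operator acted.
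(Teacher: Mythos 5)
Your direct verification is exactly the argument the paper intends: its own ``proof'' consists of the single remark that (C1)--(C4) ``can easily be checked from the definition,'' so your $0/{-1}/{-2}$ table is the missing content, and it is computed correctly (as is the weight bookkeeping and the reduction of (C1), (C4) to the nonnegativity of $A_1^{(i)}$ and $A_n^{\ast(i)}$). One intermediate assertion is false, though: after applying $\te_i$, the new maximum $\eps_i({\bf a})-1$ need \emph{not} be attained uniquely at $k=k_e$. For $k<k_e$ you only know $A_k^{(i)}({\bf a})\leq\eps_i({\bf a})-1$, and these values are unchanged by $\te_i$, so any $k<k_e$ with $A_k^{(i)}({\bf a})=\eps_i({\bf a})-1$ is also a maximizer of $\te_i{\bf a}$ (this happens already for $n=3$, $i=3$, $a_{1,4}=a_{2,4}=1$ and all other entries $0$). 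This does not damage the proof, because the only consequence you draw from uniqueness is $k_f(\te_i{\bf a})=k_e({\bf a})$, and that follows directly from your table without uniqueness: every $k>k_e$ drops to at most $\eps_i({\bf a})-2$, so $k_e$ is the \emph{largest} maximizer of $\te_i{\bf a}$ even if smaller ones exist. The symmetric remark applies to $\tf_i$ (extra maximizers of $\tf_i{\bf a}$ can occur at $k>k_f$, but $k_f$ is still the smallest, which is what $k_e(\tf_i{\bf a})=k_f({\bf a})$ requires) and to the starred case with $l_e$, $l_f$. With that sentence weakened accordingly, the proof is complete.
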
 
From the definition, one can easily check the axiom (C1) $\sim$ (C4). So we omit
to give a detail.
\subsection{Crystal structure on PBW basis associated to ${\bf i}_0$}
As we mentioned in the previous subsection, we regard $\cB$ as the set of 
${\bf i}_0$-Lusztig datum and denote by 
$\{P_{{\bf i}_0}({\bf a})~|~{\bf a}\in\cB\}$ the corresponding PBW basis.
\begin{thm}\label{thm:PBW}
{\rm (1)} We have
$$\te_iP_{{\bf i}_0}({\bf a})\equiv P_{{\bf i}_0}(\te_i{\bf a})~
\mbox{\rm mod }qL(\infty)\quad\mbox{\rm and}\quad
\tf_iP_{{\bf i}_0}({\bf a})\equiv P_{{\bf i}_0}(\tf_i{\bf a})~
\mbox{\rm mod }qL(\infty).$$ 
{\rm (2)} We have
$$\te_iP_{{\bf i}_0}^*({\bf a})\equiv P_{{\bf i}_0}^*(\te_i^*{\bf a})~
\mbox{\rm mod }qL(\infty)\quad\mbox{\rm and}\quad
\tf_iP_{{\bf i}_0}^*({\bf a})\equiv P_{{\bf i}_0}^*(\tf_i^*{\bf a})~
\mbox{\rm mod }qL(\infty).$$ 
\end{thm}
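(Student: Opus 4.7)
The plan is to derive Theorem~\ref{thm:PBW} as a type-$A$ reformulation of Reineke's representation-theoretic description \cite{Re} of the crystal structure on the PBW basis attached to an adapted reduced word. First I would verify that ${\bf i}_0$ is adapted to the type-$A_n$ Dynkin quiver $Q$ with the linear orientation $1 \to 2 \to \cdots \to n$; a direct inspection using the recursive block structure ${\bf i}_0 = (1)(2,1)(3,2,1)\cdots(n,n{-}1,\ldots,1)$ shows that at each step the chosen vertex is a source of the successively reflected quiver. Via the Ringel--Hall construction, this identifies the PBW basis $B_{{\bf i}_0}$ with the isomorphism classes of representations of $Q$, and each representation $M$ decomposes uniquely as $M \simeq \bigoplus_{(k,l)\in\Pi} M_{k,l}^{\oplus a_{k,l}}$, where $M_{k,l}$ is the interval indecomposable supported on the vertices $\{k, k+1, \ldots, l-1\}$ with identity maps along the arrows. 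The ${\bf i}_0$-Lusztig datum $(a_{k,l})$ is exactly the multiplicity vector, so $\cB$ is identified with the set of isomorphism classes of $Q$-representations.

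By Reineke \cite{Re}, on an adapted PBW basis the Kashiwara operators admit an explicit representation-theoretic description: $\eps_i(M)$ equals the dimension of a natural subspace at vertex $i$ (a cokernel or kernel of the canonical linear map assembled from the arrows incident to $i$), and $\te_i, \tf_i$ act by moving one copy between two interval indecomposables that differ by a simple summand at vertex $i$. The key step is to translate these quantities into the combinatorial data of Section~4.1. A direct computation in type $A$ shows that, for the interval decomposition of $M$, the alternating sum
$$A_k^{(i)}({\bf a}) = \sum_{s=1}^{k}\bigl(a_{s, i+1} - a_{s-1, i}\bigr)$$
equals the dimension of the natural subspace at vertex $i$ contributed by the first $k$ summands. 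Consequently $\eps_i({\bf a}) = \max_{1 \leq k \leq i} A_k^{(i)}({\bf a})$, the minimizer $k_e$ identifies the indecomposable $M_{k_e, i+1}$ from which $\te_i$ removes one copy (producing an extra $M_{k_e, i}$), and the maximizer $k_f$ identifies the indecomposable $M_{k_f, i}$ extended by $\tf_i$ into $M_{k_f, i+1}$. These are precisely the rules ${\bf a} \mapsto {\bf a}^{(1)}$ and ${\bf a} \mapsto {\bf a}^{(2)}$, proving part~(1).

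For part~(2), I would exploit the anti-involution $\ast$. Under the Ringel--Hall identification, $\ast$ corresponds to the duality that reverses every arrow of $Q$; hence the $\ast$-PBW basis $B^*_{{\bf i}_0}$ is identified with the ordinary PBW basis associated to the opposite quiver $Q^{\mathrm{op}}$. Applying the argument of part~(1) to $Q^{\mathrm{op}}$ produces the ``dual'' combinatorial data: $A^{\ast(i)}_l({\bf a}) = \sum_{t=l+1}^{n+1}(a_{i, t} - a_{i+1, t+1})$ replaces $A_k^{(i)}$, now with $l$ running over the second coordinate, and the indices $l_e, l_f$ encode which of the indecomposables $M_{i, l+1}, M_{i+1, l+1}$ is adjusted. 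The rules ${\bf a} \mapsto {\bf a}^{(3)}$ and ${\bf a} \mapsto {\bf a}^{(4)}$ then match $\te_i^*$ and $\tf_i^*$.

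The main technical obstacle lies in the translation step: verifying the dimension identity $A_k^{(i)}({\bf a}) = \dim V_k^{(i)}$, where $V_k^{(i)}$ is the specific vertex-$i$ subspace appearing in Reineke's formula. The telescoping form of $A_k^{(i)}$ precisely reflects how each indecomposable $M_{s, i+1}$ contributes to the source of the arrow between vertices $i$ and $i+1$ while $M_{s, i}$ caps that contribution from below; once this bookkeeping is carried out, the identification of $k_e, k_f$ (resp. $l_e, l_f$) with the correct indecomposables to modify is forced by the standard ``min/max'' rule in Reineke's description, and the positivity assertions already established in Lemma~\ref{lemma:well-def} guarantee that the resulting tuples remain in $\cB$.
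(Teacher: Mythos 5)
Your proposal is essentially the paper's own argument: the paper proves this theorem simply by citing Reineke \cite{Re} --- part (2) is exactly Reineke's formula (his $\te_i,\tf_i$ on Lusztig data are this paper's $\te_i^*,\tf_i^*$), and part (1) is stated to follow ``by the similar method'' --- which is precisely the reduction to Reineke's quiver-representation description of the crystal operators on an adapted PBW basis that you outline. The only structural differences are cosmetic: you derive part (1) from Reineke and obtain part (2) by the arrow-reversal/$\ast$ duality whereas the paper takes part (2) as the literal citation, and your identification of the partial sums $A_k^{(i)}({\bf a})$ with the relevant dimensions (which, note, must be phrased as a max over possibly negative telescoping sums rather than literal subspace dimensions) is exactly the bookkeeping already carried out in \cite{Re}.
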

\begin{rem}{\rm
The formulas (2) is proved by Reineke \cite{Re} (see also \cite{Sav}, 
\cite{EK}). Note that in \cite{Re},
he denotes our $\te_i^*$ and $\tf_i^*$ on $\cB$, by $\te_i$ and $\tf_i$, 
respectively. The formulas (1) is proved by the similar method. 
}\end{rem}
By the definition, we immediately have the next corollary.
\begin{cor} {\rm (1)} We have
$$\te_i^*P_{{\bf i}_0}({\bf a})\equiv P_{{\bf i}_0}(\te_i^*{\bf a})~
\mbox{\rm mod }qL(\infty)\quad\mbox{\rm and}\quad
\tf_i^*P_{{\bf i}_0}({\bf a})\equiv P_{{\bf i}_0}(\tf_i^*{\bf a})~
\mbox{\rm mod }qL(\infty).$$
{\rm (2)} As a by-product, we have that $\cB$ is isomorphic to $B(\infty)$
as a crystal with $\ast$-structure.
\end{cor}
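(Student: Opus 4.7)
The plan is to deduce both parts of the corollary directly from Theorem \ref{thm:PBW} by conjugating with the anti-involution $\ast$.

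For (1), I would apply $\ast$ to the congruence
$$\te_i P_{{\bf i}_0}^*({\bf a}) \equiv P_{{\bf i}_0}^*(\te_i^* {\bf a}) \mbox{ mod } qL(\infty)$$
supplied by Theorem \ref{thm:PBW}(2). Because $\ast$ preserves $L(\infty)$, it also preserves $qL(\infty)$, so the congruence survives the application. Using the defining identity $\te_i^* = \ast\, \te_i\, \ast$, the left-hand side transforms as $(\te_i P_{{\bf i}_0}^*({\bf a}))^* = \te_i^*\bigl((P_{{\bf i}_0}^*({\bf a}))^*\bigr) = \te_i^* P_{{\bf i}_0}({\bf a})$, while the right-hand side becomes $\bigl(P_{{\bf i}_0}^*(\te_i^* {\bf a})\bigr)^* = P_{{\bf i}_0}(\te_i^* {\bf a})$, since $\ast$ is an involution and $(P_{{\bf i}_0}^*({\bf c}))^* = P_{{\bf i}_0}({\bf c})$. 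Running the same argument on the companion $\tf_i$ formula in Theorem \ref{thm:PBW}(2) yields the second equation of (1).

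For (2), I would verify that the bijection $\Xi_{{\bf i}_0}: \cB \to B(\infty)$, ${\bf a} \mapsto P_{{\bf i}_0}({\bf a}) \mbox{ mod } qL(\infty)$, intertwines every piece of crystal data together with the $\ast$-operators. Compatibility with $\te_i$ and $\tf_i$ is Theorem \ref{thm:PBW}(1); compatibility with $\te_i^*$ and $\tf_i^*$ is the part (1) just established. Weights match by direct computation, since each twisted generator $T_{i_1} \cdots T_{i_{k-1}}(f_{i_k}^{(c_k)})$ has weight $-c_k \beta_k$, so $P_{{\bf i}_0}({\bf a})$ has weight $-\sum_k c_k \beta_k = -\sum_i m_i \alpha_i = \mbox{wt}({\bf a})$. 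The equality $\eps_i({\bf a}) = \eps_i(\Xi_{{\bf i}_0}({\bf a}))$ then follows because the combinatorial $\te_i$ strictly decreases $\eps_i$ by $1$ at each step and returns $0$ precisely when $\eps_i = 0$; intertwining with the Kashiwara $\te_i$ on $B(\infty)$ forces the same count, and likewise for $\eps_i^*$. The values of $\vphi_i$ and $\vphi_i^*$ then agree by axiom (C1).

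I do not anticipate a substantial obstacle here: once Theorem \ref{thm:PBW} is in hand, both statements are essentially formal. The only nontrivial bookkeeping is the identification of the combinatorial $\eps_i, \eps_i^*$ on $\cB$ with their representation-theoretic counterparts in $B(\infty)$, and this is automatic from the intertwining together with the fact that the combinatorial operators vanish exactly at the expected places.
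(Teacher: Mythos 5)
Your proposal is correct and follows essentially the same route as the paper, which simply records this corollary as an immediate consequence of Theorem \ref{thm:PBW} and the definitions $\te_i^*=\ast\,\te_i\,\ast$, $\tf_i^*=\ast\,\tf_i\,\ast$, $P_{\bf i}^*({\bf c})=(P_{\bf i}({\bf c}))^*$; you have merely written out the conjugation-by-$\ast$ argument (together with the routine identification of $\mbox{wt}$, $\eps_i$, $\eps_i^*$) that the paper leaves implicit.
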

\section{Mirkovi\'c-Vilonen polytopes in type $A_n$}
\subsection{Mirkovi\'c-Vilonen polytopes and Berenstein-Zelevinsky data}
Let $\Lambda_i~(i\in I)$ be a fundamental weight for $\gtg$. Set
$$\Gamma_n=\bigcup_{i\in I}W\Lambda_i$$
and an element $\gamma\in \Gamma_n$ is called a chamber weight. 
Let ${\bf M}=(M_{\gamma})_{\gamma\in \Gamma_n}$ be a collection of
integers indexed by $\Gamma_n$. For each $\gamma\in\Gamma_n$, we call
$M_{\gamma}$ the the $\gamma$-component of ${\bf M}$, and denote it by 
$({\bf M})_{\gamma}$.

For a given ${\bf M}=(M_{\gamma})_{\gamma\in \Gamma_n}$, consider the following
polytope in $\gth_{\snr}$:
$$P({\bf M})=\{h\in \gth_{\snr}~|~\langle h,\gamma\rangle\geq M_{\gamma}~
(\forall \gamma\in\Gamma_n)\}.$$
\begin{defn}
{\rm (1)} A polytope $P({\bf M})$
is called a pseudo-Weyl polytope if it satisfies the following condition:
\vskip 1mm
\noindent
{\rm (BZ-1)} {\em ({\em edge inequalities})} for all $w\in W$ and $i\in I$,
$$M_{w\Lambda_i}+M_{ws_i\Lambda_i}
+\sum_{j\in I\setminus\{i\}}a_{ji}M_{w\Lambda_j}
\leq 0;$$
\noindent
{\rm (2)} A pseudo-Weyl polytope $P({\bf M})$ is called a Mirkovi\'c-Vilonen
{\em ({\em MV for short})} polytope if it satisfies the following condition:
\vskip 1mm
\noindent
{\rm (BZ-2)} {\em ({\em $3$-term relations})}
for every $w\in W$ and $i,j\in I$ with $a_{ij}=a_{ji}=-1$ and $ws_i>w$, $ws_j>w$,
$$M_{ws_i\Lambda_i}+M_{ws_j\Lambda_j}=\mbox{\rm min}\left\{
M_{w\Lambda_i}+M_{ws_is_j\Lambda_j},~M_{w\Lambda_j}
+M_{ws_js_i\Lambda_i}
\right\}.$$
Here $(a_{ij})_{i,j\in I}$ is the Cartan matrix of type $A_n$ and $>$ is the 
strong Bruhat ordering of $W$. If $P({\bf M})$ is a MV polytope, the 
corresponding collection of integers ${\bf M}=(M_{\gamma})_{\gamma\in \Gamma_n}$
is called Berenstein-Zelevinsky {\em ({\em BZ for short})} datum of type $A_n$.
\end{defn} 
\begin{rem}{\rm
For a collection of integers ${\bf M}=(M_{\gamma})_{\gamma\in \Gamma_n}$ 
which satisfies the condition (BZ-1), set
$$\mu_{w}=\sum_{i\in I}M_{w\Lambda_i}w h_i\in 
\gth_{\mbox{\scriptsize $\nr$}}\qquad(w\in W)$$
and consider a collection of vectors  
$$\mu_{\bullet}=(\mu_w)_{w\in W}\subset \gth_{\mbox{\scriptsize $\nr$}}.$$
Then Kamnitzer \cite{Kam2} showed the corresponding pseudo-Weyl polytope 
$P({\bf M})$ is the convex hull of $\mu_{\bullet}$. 
That is, there is a one to one correspondence between the set of pseudo-Weyl 
polytopes and the set of collections of integers which satisfies the condition 
(BZ-1).
}\end{rem}
\begin{defn}
A BZ datum ${\bf M}^{w_0}=(M_{\gamma}^{w_0})_{\gamma\in \Gamma_n}$
is called a $w_0$-BZ datum of type $A_n$ if it satisfies 
\vskip 1mm
\noindent
{\rm (BZ-0)} {\em ({\em $w_0$-normalization condition})} for all $i\in I$, 
$$M_{w_0\Lambda_i}^{w_0}=0.$$
We denote by $\mathcal{BZ}^{w_0}$ the set of all $w_0$-BZ data.
\end{defn}
A set of integers $K\subset [1,n+1]$ will be called a Maya diagram of rank $n$.
We denote by $\cM_n$ the set of all Maya diagram of rank $n$. Set
$\cM^{\times}_n=\cM_n\setminus \left\{\phi,[1,n+1]\right\}$. From now on, we 
identify the set of chamber weights $\Gamma_n$ with $\cM^{\times}_n$ by the
following way: recall that there is a natural action of 
$W\cong \mathfrak{S}_{n+1}$ on the set $\{1,2,\cdots,n+1\}$. Consider the map
$\Gamma_n\to \cM_{n}^{\times}$ defined by $\gamma=w\Lambda_i\mapsto w\cdot[1,i]$.
Since this map is bijective, we can identify $\Gamma_n$ with $\cM^{\times}_n$.
By the above identification, $\Lambda_i$ and $w_0\Lambda_i$ 
are regarded as
$$\Lambda_i~\leftrightarrow~[1,i],\quad w_0\Lambda_i~\leftrightarrow~
[n-i+2,n+1]\qquad(i\in I).$$
Under the above identification, the definition of $w_0$-BZ datum can be 
rewritten as follows:
\begin{lemma}
A collection ${\bf M}^{w_0}=(M_{K}^{w_0})_{K\in \cM_n^{\times}}$ of integers
is a $w_0$-BZ datum of type $A_n$ if and only if it satisfies the 
following conditions:
\vskip 1mm
\noindent
{\rm (BZ-0)'} for all $i\in I$, 
$$M_{[n-i+2,n+1]}^{w_0}=0;$$
\noindent
{\rm (BZ-1)'} for every two indices $i\ne j$ in $[1,n+1]$ and every 
$K\in \cM_n$ 
with $K\cap\{i,j\}=\phi$, 
$$M_{Ki}^{w_0}+M_{Kj}^{w_0}\leq M_{Kij}^{w_0}+M_{K}^{w_0};$$
\noindent
{\rm (BZ-2)'} for every three indices $i<j<k$ in $[1,n+1]$ and every 
$K\in \cM_n$ with $K\cap\{i,j,k\}=\phi$, 
$$M_{Kik}^{w_0}+M_{Mj}^{w_0}=\mbox{\rm min}\left\{
M_{Kij}^{w_0}+M_{Kk}^{w_0},~M_{Kjk}^{w_0}+M_{Ki}^{w_0}\right\}.$$
Here we denote $M_{Ki}^{w_0}=M_{K\cup\{i\}}^{w_0}$, etc., and set 
$M_{\phi}^{w_0}=M_{[1,n+1]}^{w_0}=0$.  
\end{lemma}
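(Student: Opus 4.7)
The plan is to prove the equivalence by verifying that the identification $\Gamma_n \leftrightarrow \cM_n^\times$, $w\Lambda_i \mapsto w\cdot[1,i]$, is a bijection, and then translating each of the three conditions (BZ-0), (BZ-1), (BZ-2) term-by-term into the Maya-diagram language. Bijectivity of the map is routine: the stabilizer of $\Lambda_i$ in $W = \mathfrak{S}_{n+1}$ equals $\mathfrak{S}_i \times \mathfrak{S}_{n+1-i}$, which is also the stabilizer of $[1,i]$, so $W\Lambda_i$ maps bijectively onto the set of $i$-subsets of $[1,n+1]$, and as $i$ ranges over $I$ we cover all nonempty proper subsets. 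The normalization (BZ-0) $\Leftrightarrow$ (BZ-0)$'$ is immediate since $w_0(k) = n+2-k$, so $w_0\cdot[1,i] = [n-i+2,n+1]$.

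For (BZ-1) $\Leftrightarrow$ (BZ-1)$'$, I would fix $w \in W$ and $i_W \in I$, set $K := w\cdot[1,i_W-1]$, $a := w(i_W)$, $b := w(i_W+1)$, and verify:
\begin{align*}
w\Lambda_{i_W} &\leftrightarrow K\cup\{a\}, & ws_{i_W}\Lambda_{i_W} &\leftrightarrow K\cup\{b\},\\
w\Lambda_{i_W-1} &\leftrightarrow K, & w\Lambda_{i_W+1} &\leftrightarrow K\cup\{a,b\}.
\end{align*}
Since $a_{j,i_W} = -1$ only for $j = i_W \pm 1$ and is zero otherwise, the edge inequality (BZ-1) collapses to $M_{K\cup\{a\}} + M_{K\cup\{b\}} \leq M_K + M_{K\cup\{a,b\}}$, which is (BZ-1)$'$. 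The boundary cases $i_W = 1, n$ correspond to $K = \emptyset$ and $K\cup\{a,b\} = [1,n+1]$; these are handled by the convention $M_\emptyset = M_{[1,n+1]} = 0$. Conversely, given any datum $(K,i,j)$ as in (BZ-1)$'$, one constructs $(w, i_W)$ by setting $i_W = |K|+1$, extending $w$ on $[1,|K|]$ to hit $K$ arbitrarily, and sending $i_W, i_W+1$ to $i, j$, so the two conditions are equivalent.

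For (BZ-2) $\Leftrightarrow$ (BZ-2)$'$, I would fix $w$ and $\{i_W, j_W\} \subset I$ with $a_{i_W j_W} = -1$; by symmetry take $j_W = i_W + 1$, and the Bruhat conditions $ws_{i_W} > w$, $ws_{j_W} > w$ translate to $w(i_W) < w(i_W+1) < w(i_W+2)$. Putting $K := w\cdot[1,i_W-1]$, $a := w(i_W)$, $b := w(i_W+1)$, $c := w(i_W+2)$, the six chamber weights in the $3$-term relation correspond to $K\cup\{a\}$, $K\cup\{b\}$, $K\cup\{c\}$, $K\cup\{a,b\}$, $K\cup\{a,c\}$, $K\cup\{b,c\}$, and substitution reproduces exactly (BZ-2)$'$ after renaming $(a,b,c) \mapsto (i,j,k)$. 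Every triple $i<j<k$ disjoint from $K$ arises in this way, by the same construction as before.

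The main bookkeeping task, and the only place where real care is required, is the computation of the six Maya diagrams in the $3$-term relation, in particular the two composite reflections $ws_{i_W}s_{j_W}\Lambda_{j_W} \leftrightarrow K\cup\{b,c\}$ and $ws_{j_W}s_{i_W}\Lambda_{i_W} \leftrightarrow K\cup\{c\}$. These are verified by applying the $s_k$'s one at a time on the standard set (each $s_k$ acts as the swap $k \leftrightarrow k+1$ on subsets), and then transporting by $w$. Everything else is mechanical, so no essential obstacle arises beyond this translation.
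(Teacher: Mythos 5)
Your proposal is correct and is exactly the translation the paper has in mind: the paper states this lemma without proof as an immediate rewriting of (BZ-0)--(BZ-2) under the identification $w\Lambda_i\mapsto w\cdot[1,i]$, and your verification (stabilizer argument for bijectivity, the collapse of the edge inequality using $a_{ji}=-1$ only for $j=i\pm1$, and the computation of the six Maya diagrams in the $3$-term relation, with the conventions $M_{\phi}^{w_0}=M_{[1,n+1]}^{w_0}=0$ covering the boundary cases) supplies precisely the omitted details. Note only that $M_{Mj}^{w_0}$ in the statement of (BZ-2)$'$ is a typo for $M_{Kj}^{w_0}$, which your computation implicitly corrects.
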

\begin{rem}{\rm
The conditions (BZ-2)' are just the conditions which are called the 
$3$-term relations in \cite{BFZ}.
}\end{rem}
\subsection{$e$-BZ datum}
\begin{defn}
A collection ${\bf M}^{e}=(M_{K}^{e})_{K\in \cM_n^{\times}}$ of integers is called 
a $e$-normalized Berenstein-Zelevinsky {\em ({\em $e$-BZ for short})} 
datum of type $A_n$ if it satisfies the above {\em (BZ-1)'}, {\em (BZ-2)'} and 
\vskip 1mm
\noindent
{\rm (BZ-0)''} {\em ({\em $e$-normalization condition})} for all $i\in I$, 
$$M_{[1,i]}^{e}=0.$$
We denote by $\mathcal{BZ}^{e}$ the set of all $e$-BZ data.
\end{defn}

For $K\in\cM_n^{\times}$, let $K^c=[1,n+1]\setminus K$ be the complement
of $K$ in $[1,n+1]$. For ${\bf M}^{w_0}=(M_K^{w_0})_{K\in \cM_n^{\times}}\in 
\mathcal{BZ}^{w_0}$, we 
define a new collection of integers 
${\bf M}^{w_0\ast}=(M_K^{w_0\ast})_{K\in \cM_n^{\times}}$ by
$$M_K^{w_0\ast}=M_{K^c}^{w_0}\quad (K\in \cM_n^{\times}).$$
\begin{lemma}
The map $\ast:{\bf M}^{w_0}\mapsto {\bf M}^{w_0\ast}$ defines a bijection form 
$\mathcal{BZ}^{w_0}$ to $\mathcal{BZ}^{e}$. The inverse $\mathcal{BZ}^{e}\to
\mathcal{BZ}^{w_0}$ of the map $\ast$ is given by 
$${\bf M}^{e}=(M_K^e)\mapsto {\bf M}^{e\ast}=(M_K^{e\ast}),\quad \mbox{where }
M_K^{e\ast}=M_{K^c}^e~(K\in\cM_n^{\times}).$$
\end{lemma}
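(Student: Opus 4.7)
The plan is to verify the lemma directly by pushing the involution $K \mapsto K^c$ through the defining axioms. The proof splits into three observations: (i) complementation respects the normalization, (ii) complementation together with a substitution of the ``complementary'' index preserves each instance of (BZ-1)' and (BZ-2)', and (iii) because $(K^c)^c = K$, the map defined by the same formula in the other direction is the two-sided inverse.

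For the normalization, note that $[1,i]^c = [i+1,n+1] = [n-(n-i+1)+2,\, n+1]$, so under $M_K^{w_0\ast} = M_{K^c}^{w_0}$ the condition $M_{[n-j+2,n+1]}^{w_0}=0$ for all $j\in I$ becomes $M_{[1,i]}^{w_0\ast}=0$ for all $i\in I$, which is exactly (BZ-0)''. The boundary convention $M_{\phi} = M_{[1,n+1]} = 0$ is preserved automatically since $\phi$ and $[1,n+1]$ are exchanged by complementation.

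For axiom (BZ-1)', given $K\in\cM_n$ with $K\cap\{i,j\}=\phi$, set $L := K^c\setminus\{i,j\}$; then $L\cap\{i,j\}=\phi$ as well, and a short computation gives
$$(K\cup\{i\})^c = L\cup\{j\},\quad (K\cup\{j\})^c = L\cup\{i\},\quad (K\cup\{i,j\})^c = L,\quad K^c = L\cup\{i,j\}.$$
Substituting $M_X^{w_0\ast}=M_{X^c}^{w_0}$ turns the inequality $M_{Ki}^{w_0\ast}+M_{Kj}^{w_0\ast}\le M_{Kij}^{w_0\ast}+M_K^{w_0\ast}$ into $M_{Lj}^{w_0}+M_{Li}^{w_0}\le M_L^{w_0}+M_{Lij}^{w_0}$, which is exactly (BZ-1)' for ${\bf M}^{w_0}$ at $(L,j,i)$. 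Since $K\mapsto L$ is a bijection on admissible indexing data, (BZ-1)' is preserved. An entirely parallel reindexing with $L := K^c\setminus\{i,j,k\}$, giving the six correspondences $Kik \leftrightarrow Lj$, $Kj \leftrightarrow Lik$, $Kij \leftrightarrow Lk$, $Kk \leftrightarrow Lij$, $Kjk \leftrightarrow Li$, $Ki \leftrightarrow Ljk$, transports the 3-term relation (BZ-2)' for ${\bf M}^{w_0\ast}$ at $(K,i,j,k)$ into the 3-term relation (BZ-2)' for ${\bf M}^{w_0}$ at $(L,i,j,k)$.

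Combining these observations shows that $\ast$ sends $\mathcal{BZ}^{w_0}$ into $\mathcal{BZ}^e$. By the symmetric argument (swapping the roles of the two normalization conventions in the first step) the analogous map defined by $M_K^{e\ast}=M_{K^c}^e$ sends $\mathcal{BZ}^{e}$ into $\mathcal{BZ}^{w_0}$. Finally, since $(K^c)^c=K$, composing the two maps is the identity on both sides, so they are mutually inverse bijections. There is no real obstacle beyond bookkeeping; the only point requiring care is checking that the six substitutions in (BZ-2)' are done consistently so that the inequality/equation one obtains is a genuine instance of the same axiom (not a permuted or otherwise incorrect version) — but the index triples $\{i,j,k\}$ and $\{i,j\}$ are carried across unchanged, so the axioms match on the nose.
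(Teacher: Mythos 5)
Your proof is correct and follows essentially the same route as the paper's: check the normalization directly, and reduce each instance of (BZ-1)' and (BZ-2)' for ${\bf M}^{w_0\ast}$ at $K$ to the corresponding instance for ${\bf M}^{w_0}$ at $L=K^c\setminus\{i,j\}$ (resp.\ $L=K^c\setminus\{i,j,k\}$), with invertibility coming from $(K^c)^c=K$. You actually spell out the (BZ-2)' correspondences and the normalization index shift $j=n-i+1$ more explicitly than the paper, which only treats (BZ-1)' in detail and leaves the rest to ``a similar argument.''
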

\begin{proof}
Let ${\bf M}^{w_0}=(M_K^{w_0})\in \mathcal{BZ}_{w_0}$. Then it is clear
that the collection of integers ${\bf M}^{w_0\ast}=(M_K^{w_0\ast})$ satisfies
(BZ-0)''. Let us prove that ${\bf M}^{w_0\ast}$ satisfies 
(BZ-1)'. Let $i\ne j$ be two indices in $[1,n+1]$ and $K\in \cM_n$ 
with $K\cap\{i,j\}=\phi$. For such $i,j$ and $K$, we set 
$L=K^c\setminus\{i,j\}$. Then we have $L\in \cM_n$ and 
$L\cap \{i,j\}=\phi$. Since ${\bf M}^{w_0}$ satisfies (BZ-1)', we have
$$M_{Li}^{w_0}+M_{Lj}^{w_0}\leq M_{Lij}^{w_0}+M_{L}^{w_0}.$$ 
Because
$$K^c=Lij,\quad (Ki)^c=Lj,\quad (Kj)^c=Li,\quad (Kij)^c=L,$$
we have
$$M_{K}^{w_0\ast}=M^{w_0}_{Lij},\quad M_{Ki}^{w_0\ast}=M^{w_0}_{Lj},
\quad M_{Kj}^{w_0\ast}=M^{w_0}_{Li},\quad M_{Kij}^{w_0\ast}=M^{w_0}_{L}.$$
Therefore we have
$$M^{w_0\ast}_{Kj}+M_{Ki}^{w_0\ast}\leq M^{w_0\ast}_{K}+M^{w_0\ast}_{Kij}.$$
This is nothing but (BZ-1)' for ${\bf M}^{w_0\ast}$. 

By the similar argument we can check (BZ-2)' for ${\bf M}^{w_0\ast}$. Thus, 
${\bf M}^{w_0\ast}$ is an $e$-BZ datum. The other statements are clear by the 
construction. 
\end{proof}
\subsection{Crystal structure on $w_0$-BZ data}
We denote $\mathcal{MV}=\left\{P({\bf M}^{w_0})~|~{\bf M}^{w_0}
\in \mathcal{BZ}^{w_0}\right\}$. In \cite{Kam2}, Kamnitzer defines 
a crystal structure on 
$\mathcal{MV}$ and shows it is isomorphic to $B(\infty)$ as a crystal. 
Since the map $\mathcal{BZ}^{w_0}\to \mathcal{MV}$ defined by 
${\bf M}^{w_0}\mapsto P({\bf M}^{w_0})$ is bijective, we can define a crystal 
structure on 
$\mathcal{BZ}^{w_0}$ in such a way that the above bijection gives an 
isomorphism of crystals. In the following, we recall the description of 
this crystal structure on $\mathcal{BZ}^{w_0}$ form \cite{Kam2}. 
\begin{rem}{\rm
In \cite{Kam2}, he uses the set of chamber weights $\Gamma_n$ as 
the index set of $\mathcal{BZ}^{w_0}$.
But, for later use, we will reformulate the above crystal structure on 
$\mathcal{BZ}^{w_0}$ by using the set of Maya diagrams $\cM_n^{\times}$ 
instead of $\Gamma_n$.  
}\end{rem}

Let ${\bf M}^{w_0}=(M_K^{w_0})\in \mathcal{BZ}^{w_0}$. Define
the weight $\mbox{wt}({\bf M}^{w_0})$ of ${\bf M}^{w_0}$ by 
$$\mbox{wt}({\bf M}^{w_0})=\sum_{i\in I}M_{[1,i]}^{w_0}\alpha_i.$$
For $i\in I$, we set 
\begin{align*}
\eps_i({\bf M}^{w_0})&=-\left(M_{[1,i]}^{w_0}+M_{[1,i+1]\setminus\{i\}}^{w_0}
-M_{[1,i+1]}^{w_0}-M_{[1,i]\setminus\{i\}}^{w_0}\right),\\
\vphi_i({\bf M}^{w_0})&=\eps_i({\bf M}^{w_0})
+\langle h_i,\mbox{wt}({\bf M}^{w_0})\rangle.
\end{align*}
We remark that $\eps_i({\bf M}^{w_0})$ is a non-negative integer in view of 
(BZ-1)'.

Let us define the action of Kashiwara operators $\te_i$ and $\tf_i~(i\in I)$.
We recall the following fact due to Kamnitzer:
\begin{prop}[\cite{Kam2}]\label{prop:Kas-op}
Let ${\bf M}^{w_0}=(M_K^{w_0})\in \mathcal{BZ}^{w_0}$ be a $w_0$-BZ 
datum.
\vskip 1mm
\noindent
{\rm (1)} If $\eps_i({\bf M}^{w_0})>0$, there exists a unique $w_0$-BZ 
datum which is denoted by 
$\te_i{\bf M}^{w_0}$ such that
\begin{itemize}
\item[(i)] $(\te_i{\bf M}^{w_0})_{[1,i]}=M_{[1,i]}^{w_0}+1$,
\item[(ii)] $(\te_i{\bf M}^{w_0})_K=M_K^{w_0}$ for all 
$K\in \cM_n^{\times}\setminus \cM_n^{\times}(i)$.
\end{itemize}
Here $\cM_n^{\times}(i)=\left\{K\in \cM_n^{\times}~|~i\in K\mbox{ and }
i+1\not\in K\right\}\subset \cM_n^{\times}$.
\vskip 1mm
\noindent
{\rm (2)} There exists a unique a unique $w_0$-BZ datum which is denoted by 
$\tf_i{\bf M}^{w_0}$ such that
\begin{itemize}
\item[(iii)] $(\tf_i{\bf M}^{w_0})_{[1,i]}=M_{[1,i]}^{w_0}-1$,
\item[(iv)] $(\tf_i{\bf M}^{w_0})_K=M_K^{w_0}$ for all 
$K\in \cM_n^{\times}\setminus \cM_n^{\times}(i)$.
\end{itemize}
\end{prop}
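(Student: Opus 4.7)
The plan is to prove (1) and (2) in parallel by a two-step argument: first, I will show uniqueness by proving that the 3-term relations (BZ-2)$'$ recursively determine every component $M_K^{w_0}$ with $K\in\cM_n^{\times}(i)$ in terms of $M_{[1,i]}^{w_0}$ and the components indexed by $\cM_n^{\times}\setminus\cM_n^{\times}(i)$; second, I will establish existence by taking the resulting recursion as a definition and verifying the BZ axioms for the modified collection.

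For uniqueness, I would fix $K\in\cM_n^{\times}(i)\setminus\{[1,i]\}$ and observe that since $K$ contains $i$ but not $i+1$ and differs from $[1,i]$, either (a) there is some $k\in K$ with $k>i+1$, or (b) there is some $k\in[1,i-1]$ with $k\notin K$. In case (a), I would set $K_0=K\setminus\{i,k\}$ and apply (BZ-2)$'$ to the triple $i<i+1<k$:
$$M_{K_0ik}^{w_0}+M_{K_0,i+1}^{w_0}=\min\bigl(M_{K_0,i,i+1}^{w_0}+M_{K_0,k}^{w_0},\ M_{K_0,i+1,k}^{w_0}+M_{K_0,i}^{w_0}\bigr).$$
Among the six terms only $M_K=M_{K_0ik}^{w_0}$ and $M_{K_0,i}^{w_0}$ have indices in $\cM_n^{\times}(i)$, and $|(K_0\cup\{i\})\Delta[1,i]|=|K\Delta[1,i]|-1$. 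In case (b), setting $K_0=K\setminus\{i\}$ and applying (BZ-2)$'$ to the triple $k<i<i+1$ expresses $M_K=M_{K_0,i}^{w_0}$ in terms of $M_{K\cup\{k\}}^{w_0}=M_{K_0,k,i}^{w_0}$ (with $|(K\cup\{k\})\Delta[1,i]|=|K\Delta[1,i]|-1$) together with exterior data. Induction on $|K\Delta[1,i]|$ then completes uniqueness.

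For existence, I would promote the recursion above to a definition of a candidate collection $\tilde{\bf M}=(\tilde M_K)$ with $\tilde M_{[1,i]}=M_{[1,i]}^{w_0}\pm 1$ and $\tilde M_K=M_K^{w_0}$ for $K\notin\cM_n^{\times}(i)$. The normalization (BZ-0)$'$ holds automatically since $[n-j+2,n+1]\notin\cM_n^{\times}(i)$ for every $j\in I$, and the 3-term relations used in the recursion hold by construction. What remains is to check (i) the edge inequalities (BZ-1)$'$ for pairs whose four Maya-diagram indices include elements of $\cM_n^{\times}(i)$, and (ii) the 3-term relations (BZ-2)$'$ for triples not directly used in the recursion; after substituting the recursive formulas, each such check should reduce to an identity or inequality already holding for ${\bf M}^{w_0}$.

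The main obstacle will be the systematic verification of (i) and (ii). In case (1), the hypothesis $\eps_i({\bf M}^{w_0})>0$ is exactly the slack in the edge inequality $M_{[1,i-1]}^{w_0}+M_{[1,i+1]}^{w_0}\geq M_{[1,i]}^{w_0}+M_{[1,i+1]\setminus\{i\}}^{w_0}$ needed to absorb the $+1$ shift at $[1,i]$; I expect its propagation through the recursion to supply the slack required at every other edge inequality touching $\cM_n^{\times}(i)$. In case (2), the $-1$ shift only relaxes the same edge inequality, so no hypothesis is needed. The cleanest framework for organizing these checks is the tropical Pl\"ucker combinatorics of Berenstein--Fomin--Zelevinsky \cite{BFZ}, which underlies Kamnitzer's original argument \cite{Kam2}; alternatively, one can translate to the geometric picture of growing or shrinking the MV polytope by one unit along the $\alpha_i$-direction, where (i) and (ii) become transparent from the convexity of the polytope.
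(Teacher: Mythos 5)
The paper does not prove this proposition at all: it is quoted verbatim from Kamnitzer \cite{Kam2} (the text introduces it with ``We recall the following fact due to Kamnitzer''), so there is no in-paper argument to compare yours against. Judged on its own terms, your uniqueness half is correct and essentially complete: the case split (some $k\in K$ with $k>i+1$, or some $k\in[1,i-1]$ with $k\notin K$) is exhaustive for $K\in\cM_n^{\times}(i)\setminus\{[1,i]\}$, in each of your two instances of (BZ-2)$'$ exactly two of the six indices lie in $\cM_n^{\times}(i)$, and the second such index has strictly smaller symmetric difference with $[1,i]$, so induction on $|K\Delta[1,i]|$ pins down every component on $\cM_n^{\times}(i)$ from $M_{[1,i]}^{w_0}$ and the components outside $\cM_n^{\times}(i)$. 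That forces any two $w_0$-BZ data satisfying (i)--(ii) (resp.\ (iii)--(iv)) to coincide.

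The existence half, however, has a genuine gap. You propose to \emph{define} $\tilde M_K$ by the recursion, but a given $K$ typically admits several applicable 3-term relations (several choices of $k$ in each case, and possibly both cases at once), each producing its own formula for $\tilde M_K$; you must either make a canonical choice and then prove all the remaining relations hold, or prove directly that the different formulas agree. That consistency check, together with the verification of every edge inequality (BZ-1)$'$ whose index set meets $\cM_n^{\times}(i)$, is where the entire difficulty of the proposition lives, and your proposal only asserts that these checks ``should reduce to an identity or inequality already holding for ${\bf M}^{w_0}$'' and that the slack from $\eps_i({\bf M}^{w_0})>0$ ``propagates.'' You correctly identify that $\eps_i>0$ is exactly the slack needed in the single edge inequality indexed by $K=[1,i-1]$, $\{i,i+1\}$, but you give no mechanism for transporting that slack to the other inequalities, and the closing appeal to ``convexity of the polytope'' is not sufficient either, since being an MV polytope is strictly stronger than being a convex pseudo-Weyl polytope --- it is precisely the tropical Pl\"ucker relations that must be preserved. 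As it stands the proposal proves uniqueness but not existence.
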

If $\eps_i({\bf M}^{w_0})=0$, we set $\te_i{\bf M}^{w_0}=0$.
\begin{thm}[\cite{Kam2}]\label{thm:BZ-crys}
$(\mathcal{BZ}^{w_0},\mbox{\em wt},\eps_i,\vphi_i,\te_i,\tf_i)$ is a crystal
in the sense of Definition \ref{defn:crystal}, which is isomorphic to 
$B(\infty)$. 
\end{thm}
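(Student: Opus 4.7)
My approach splits into two parts: first, directly verifying the crystal axioms (C1)–(C4) for $(\mathcal{BZ}^{w_0}, \mbox{wt}, \eps_i, \vphi_i, \te_i, \tf_i)$; and second, constructing a crystal isomorphism $\Phi: \cB \xrightarrow{\sim} \mathcal{BZ}^{w_0}$, which combined with the identification $\cB \cong B(\infty)$ from Theorem \ref{thm:PBW} yields the result.

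For the axioms, (C1) is the definition of $\vphi_i$, and (C4) is vacuous since $\vphi_i$ takes values in $\nz$. For (C2), note that by Proposition \ref{prop:Kas-op} the operator $\te_i$ alters only the components indexed by $K \in \cM_n^\times(i)$. Among the four subsets $[1,i]$, $[1,i+1]$, $[1,i+1]\setminus\{i\}$, $[1,i]\setminus\{i\}$ involved in the definitions of $\mbox{wt}$ and $\eps_i$, only $[1,i]$ lies in $\cM_n^\times(i)$. Hence $\mbox{wt}(\te_i {\bf M}^{w_0}) = \mbox{wt}({\bf M}^{w_0}) + \alpha_i$ (since $M_{[1,i]}^{w_0}$ increases by one and the other $M_{[1,j]}^{w_0}$ are unchanged) and $\eps_i(\te_i {\bf M}^{w_0}) = \eps_i({\bf M}^{w_0}) - 1$. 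The axiom (C2') follows symmetrically, and (C3) is precisely the uniqueness statement of Proposition \ref{prop:Kas-op}.

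The core of the argument is to produce the crystal isomorphism $\Phi$. The plan is to define $\Phi({\bf a})$ by an explicit formula expressing each coordinate $M_K^{w_0}$ as a minimum of certain linear functionals in the entries $a_{k,l}$ of the $\mathbf{i}_0$-Lusztig datum ${\bf a}$; this is the content of Theorem \ref{prop:BZ}. Once $\Phi$ is defined, I verify that it preserves weight and that it intertwines the Kashiwara operators: for ${\bf a} \in \cB$, one checks $\Phi(\tf_i {\bf a}) = \tf_i \Phi({\bf a})$ by comparing, on one side, the explicit piecewise-linear formula for $\tf_i$ from Section 4, and on the other, the local alteration on $\cM_n^\times(i)$ prescribed by Proposition \ref{prop:Kas-op}. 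Bijectivity of $\Phi$ then follows either by exhibiting an explicit inverse or from the fact that both sides are isomorphic to $B(\infty)$ via the highest weight element.

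The main obstacle is Proposition \ref{prop:Kas-op} itself: the existence of a new BZ datum obtained by altering only the single coordinate $M_{[1,i]}^{w_0}$ is not at all formal. One must propagate the change consistently to the entire family indexed by $\cM_n^\times(i)$ using the three-term relations (BZ-2)', and then verify that the resulting collection still satisfies all edge inequalities (BZ-1)' and all three-term relations (BZ-2)' across $\cM_n^\times$. Once Proposition \ref{prop:Kas-op} is available, the remaining verification that $\Phi$ intertwines $\te_i, \tf_i$ reduces to a direct but delicate comparison of the two explicit combinatorial recipes, which will be carried out using the quiver-theoretic interpretation of BZ data developed in Sections 6 and 7.
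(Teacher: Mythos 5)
The paper does not prove this theorem: it is quoted from Kamnitzer \cite{Kam2}, where both the crystal structure on $\mathcal{MV}$ (equivalently on $\mathcal{BZ}^{w_0}$) and the isomorphism with $B(\infty)$ are established by entirely different means, via MV cycles and the geometry of the affine Grassmannian. So there is no internal proof to compare against, and your proposal has to stand on its own; as written it does not.

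Two gaps. First, you correctly identify Proposition \ref{prop:Kas-op} as the main obstacle --- the existence and uniqueness of a $w_0$-BZ datum obtained by shifting the single coordinate $M_{[1,i]}^{w_0}$ and propagating consistently through (BZ-1)' and (BZ-2)' --- but you then simply assume it. That proposition is itself a theorem of \cite{Kam2} and carries essentially all the content of the first half of the statement; your verification of (C1)--(C4) on top of it is correct but routine (and you are right that among $[1,i]$, $[1,i+1]$, $[1,i+1]\setminus\{i\}$ and $[1,i]\setminus\{i\}$ only $[1,i]$ lies in $\cM_n^{\times}(i)$, so the weight and $\eps_i$ computations go through). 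Second, and more seriously, your route to the isomorphism with $B(\infty)$ is circular relative to the paper's logical order. Theorem \ref{prop:BZ} produces an isomorphism $\Psi:\cB\to\mathcal{BZ}^{e}$ of $\ast$-crystals, not a map $\cB\to\mathcal{BZ}^{w_0}$ of ordinary crystals, so you would in any case need to transport through the bijection $\ast:\mathcal{BZ}^{e}\to\mathcal{BZ}^{w_0}$ and the $\ast$-involution on $B(\infty)$; more to the point, its proof (Lemma \ref{lemma:isom2}) opens with an appeal to Corollary \ref{cor:ast-crys}, which the paper derives from Proposition \ref{prop:Kas-op} \emph{and Theorem \ref{thm:BZ-crys}} --- the very statement you are trying to prove. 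To make your argument non-circular you would have to check that Lemmas \ref{lemma:isom1} and \ref{lemma:isom2} only ever use the existence--uniqueness characterization of $\te_i^{*},\tf_i^{*}$ (parts (1)--(2) of Corollary \ref{cor:ast-crys}, which follow from Proposition \ref{prop:Kas-op} alone) and never the fact that $\mathcal{BZ}^{w_0}\cong B(\infty)$. This is plausible, but it is exactly the point that needs to be argued, and your proposal does not address it.
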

\subsection{$\ast$-crystal structure on $e$-BZ data}
By using Theorem \ref{thm:BZ-crys}, we can define a crystal structure on the
set of $e$-BZ data $\mathcal{BZ}^{e}$ as follows. Recall the bijection 
$\ast:\mathcal{BZ}^{w_0}\overset{\sim}{\to} \mathcal{BZ}^{e}$ and its inverse
which is also denoted by $\ast$. For ${\bf M}^e\in \mathcal{BZ}^{e}$, we set
$$\mbox{wt}({\bf M}^e)=\mbox{wt}({\bf M}^{e\ast}),\quad 
\eps_i^*({\bf M}^e)=\eps_i({\bf M}^{e\ast}),\quad
\vphi_i^*({\bf M}^e)=\vphi_i({\bf M}^{e\ast}).$$
Here we remark that ${\bf M}^{e\ast}$ is a $w_0$-BZ datum 
and the right hand sides are already defined. The Kashiwara operators 
$\te_i^*,~\tf_i^*~(i\in I)$ on $\mathcal{BZ}^{e}$ are defined by 
$$\te_i^*=\ast\circ\te_i\circ\ast,\quad \tf_i^*=\ast\circ\tf_i\circ\ast.$$
The following corollary is an easy consequence of Proposition \ref{prop:Kas-op}
and Theorem \ref{thm:BZ-crys}.
\begin{cor}\label{cor:ast-crys}
{\rm (1)} Let ${\bf M}^{e}=(M_K^{e})\in \mathcal{BZ}^{e}$ be an $e$-BZ 
datum. If $\eps_i^*({\bf M}^{e})>0$, there exists a unique $e$-BZ 
datum which is denoted by 
$\te_i^*{\bf M}^{e}$ such that
\begin{itemize}
\item[(i)] $(\te_i^*{\bf M}^{e})_{[1,i]^c}=M_{[1,i]^c}^{e}+1$,
\item[(ii)] $(\te_i^*{\bf M}^{e})_K=M_K^{e}$ for all 
$K\in \cM_n^{\times}\setminus \cM_n^{\times}(i)^{\ast}$. 
\end{itemize}
Here $\cM_n^{\times}(i)^*=\left\{K\in \cM_n^{\times}~|~i\not\in K\mbox{ and }
i+1\in K\right\}\subset \cM_n^{\times}$.
\vskip 1mm
\noindent
{\rm (2)} There exists a unique a unique $e$-BZ datum which is denoted by 
$\tf_i^*{\bf M}^{e}$ such that
\begin{itemize}
\item[(iii)] $(\tf_i^*{\bf M}^{e})_{[1,i]^c}=M_{[1,i]^c}^{e}-1$,
\item[(iv)] $(\tf_i^*{\bf M}^{e})_K=M_K^{e}$ for all 
$K\in \cM_n^{\times}\setminus \cM_n^{\times}(i)^*$.
\end{itemize}
\vskip 1mm
\noindent
{\rm (3)} $(\mathcal{BZ}^{e},\mbox{\em wt},\eps_i^*,\vphi_i^*,\te_i^*,\tf_i^*)$ 
is a crystal in the sense of Definition \ref{defn:crystal}, 
which is isomorphic to $B(\infty)$.
\end{cor}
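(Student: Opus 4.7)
The plan is to transport everything through the involutive bijection $\ast:\mathcal{BZ}^{e}\to\mathcal{BZ}^{w_0}$ already established. By the very definitions of $\mbox{wt}$, $\eps_i^*$, $\vphi_i^*$, $\te_i^*$ and $\tf_i^*$ on $\mathcal{BZ}^{e}$, the map $\ast$ intertwines each of these with the corresponding ordinary datum on $\mathcal{BZ}^{w_0}$, so every assertion of the corollary should follow from the analogous assertion for $\mathcal{BZ}^{w_0}$ given by Proposition \ref{prop:Kas-op} and Theorem \ref{thm:BZ-crys}.

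For part (1), I would start from ${\bf M}^{e}$ with $\eps_i^*({\bf M}^e)>0$, note that $\eps_i({\bf M}^{e\ast})=\eps_i^*({\bf M}^e)>0$ by definition, and apply Proposition \ref{prop:Kas-op}(1) to ${\bf M}^{e\ast}\in\mathcal{BZ}^{w_0}$. This produces a unique $w_0$-BZ datum $\te_i{\bf M}^{e\ast}$ satisfying (i) and (ii) of that proposition. Setting $\te_i^*{\bf M}^{e}:=\ast(\te_i{\bf M}^{e\ast})$ and using $M^{e\ast}_K=M^e_{K^c}$, one then translates (i), (ii) back into the statements of the corollary. The essential combinatorial observation is that complementation $K\mapsto K^c$ induces a bijection $\cM_n^{\times}(i)\leftrightarrow\cM_n^{\times}(i)^*$: indeed $i\in K$ and $i+1\notin K$ is equivalent to $i\notin K^c$ and $i+1\in K^c$. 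In particular $[1,i]\leftrightarrow[1,i]^c$, and ``$(\te_i{\bf M}^{e\ast})_K=M_K^{e\ast}$ for $K\notin\cM_n^{\times}(i)$'' becomes condition (ii) of the corollary verbatim. Uniqueness is inherited from Proposition \ref{prop:Kas-op}. Part (2) is handled the same way via Proposition \ref{prop:Kas-op}(2), with no positivity hypothesis needed.

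For part (3), parts (1) and (2) together show that $\ast$ is an isomorphism of sixtuples $(\mathcal{BZ}^e,\mbox{wt},\eps_i^*,\vphi_i^*,\te_i^*,\tf_i^*)\to(\mathcal{BZ}^{w_0},\mbox{wt},\eps_i,\vphi_i,\te_i,\tf_i)$. The crystal axioms (C1)--(C4) therefore transfer from $\mathcal{BZ}^{w_0}$, which is a crystal isomorphic to $B(\infty)$ by Theorem \ref{thm:BZ-crys}; composing $\ast$ with that isomorphism gives the desired isomorphism $\mathcal{BZ}^{e}\cong B(\infty)$. The only point in the argument requiring genuine care is the complementation bijection $\cM_n^{\times}(i)\leftrightarrow\cM_n^{\times}(i)^*$ together with the identification $[1,i]\leftrightarrow[1,i]^c$; once this bookkeeping is explicit, the remaining verifications are purely mechanical.
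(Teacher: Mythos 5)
Your argument is correct and is exactly the route the paper intends: the corollary is stated there as an immediate consequence of Proposition \ref{prop:Kas-op} and Theorem \ref{thm:BZ-crys}, obtained by transporting the crystal structure through the involution $\ast$, with the complementation bijection $\cM_n^{\times}(i)\leftrightarrow\cM_n^{\times}(i)^*$ and the identification $[1,i]\leftrightarrow[1,i]^c$ doing the bookkeeping. Nothing further is needed.
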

\subsection{Anderson-Mirkovi\'c conjecture}\label{conj:AM}
Let ${\bf M}^{w_0}=(M_K^{w_0})\in \mathcal{BZ}^{w_0}$ be be a $w_0$-BZ datum.
In \cite{Kam2}, Kamnitzer gives the the explicit form of $\tf_i{\bf M}^{w_0}$.
We shall recall his result under the identification $\Gamma_n\cong
\cM_n^{\times}$.
\begin{thm}[\cite{Kam2}]\label{thm:AM}
For each $i\in I$, we have
$$(\tf_i{\bf M}^{w_0})_K=\left\{\begin{array}{ll}
\mbox{\em min}\left\{M^{w_0}_K,~M^{w_0}_{s_iK}+c_i({\bf M}^{w_0})\right\} &
(K\in \cM_n^{\times}(i)),\\
M^{w_0}_K & (\mbox{\em otherwise}).
\end{array}\right.$$
Here $c_i({\bf M}^{w_0})=M_{[1,i]}^{w_0}-M^{w_0}_{[1,i+1]\setminus\{i\}}-1$.
\end{thm}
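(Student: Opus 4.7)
The plan is to combine the uniqueness statement of Proposition \ref{prop:Kas-op}(2) with the explicit isomorphism $\cB \simeq \mathcal{BZ}^{w_0}$ constructed in Theorem \ref{prop:BZ} (together with the explicit formula of Corollary \ref{cor:BZ}). Denote by $\widetilde{\bf M}$ the collection of integers given by the right-hand side of Theorem \ref{thm:AM}. By Proposition \ref{prop:Kas-op}(2), $\tf_i {\bf M}^{w_0}$ is the unique $w_0$-BZ datum satisfying $(\tf_i{\bf M}^{w_0})_{[1,i]} = M_{[1,i]}^{w_0} - 1$ and $(\tf_i{\bf M}^{w_0})_K = M_K^{w_0}$ for all $K \notin \cM_n^{\times}(i)$. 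Hence it suffices to verify that $\widetilde{\bf M}$ (i) is a $w_0$-BZ datum, and (ii) satisfies these two conditions.

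Condition (ii) is an immediate check. For $K \notin \cM_n^{\times}(i)$ the AM formula simply returns $M_K^{w_0}$. For $K = [1,i]$, which lies in $\cM_n^{\times}(i)$, the identity $s_i[1,i] = [1,i+1] \setminus \{i\}$ together with the definition of $c_i({\bf M}^{w_0})$ gives
\[
M_{s_i[1,i]}^{w_0} + c_i({\bf M}^{w_0}) = M_{[1,i+1]\setminus\{i\}}^{w_0} + M_{[1,i]}^{w_0} - M_{[1,i+1]\setminus\{i\}}^{w_0} - 1 = M_{[1,i]}^{w_0} - 1,
\]
and thus the corresponding minimum equals $M_{[1,i]}^{w_0} - 1$, as required.

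For (i), I would transport the problem to the Lusztig-datum side. Let ${\bf a} \in \cB$ correspond to ${\bf M}^{w_0}$ under Theorem \ref{prop:BZ} and set ${\bf a}^{(2)} = \tf_i {\bf a}$, which differs from ${\bf a}$ only in the two entries $a_{k_f, i}$ (decremented) and $a_{k_f, i+1}$ (incremented), where $k_f$ is the largest index attaining the maximum of $A_k^{(i)}({\bf a})$. Corollary \ref{cor:BZ} expresses each $M_K^{w_0}({\bf a})$ as a minimum over a combinatorial family determined by ${\bf a}$ via the quiver description of Section 6. Comparing $M_K^{w_0}({\bf a}^{(2)})$ term by term with $\widetilde{M}_K$, (i) reduces to a case analysis on $K$: for $K \notin \cM_n^{\times}(i)$ the defining minimum is invariant under the prescribed change of coordinates (the two altered entries cancel in every relevant sum), whereas for $K \in \cM_n^{\times}(i)$ the change either shifts the attained minimum by $-1$ or leaves it unchanged, the two possibilities corresponding exactly to the two arguments of the minimum in the AM formula.

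The main obstacle is this final case analysis: for $K \in \cM_n^{\times}(i)$ one must show that the altered pair $(a_{k_f,i}, a_{k_f,i+1})$ contributes to the defining minimum for $M_K^{w_0}$ precisely when the first argument of the AM minimum is the smaller one, and fails to contribute precisely when the second argument wins. This is where the choice of $k_f$ as the \emph{rightmost} maximizer of $A_k^{(i)}$ plays the essential role: via the quiver interpretation of Section 6 it guarantees that the altered coordinates are exactly those that govern which branch of the minimum is attained, so that the two branches of the AM formula are selected in the right regimes.
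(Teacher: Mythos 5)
Your outer reduction is fine as far as it goes: by Proposition \ref{prop:Kas-op}(2), once one knows that the collection $\widetilde{\bf M}$ defined by the right-hand side is a $w_0$-BZ datum, the two checks you perform (agreement off $\cM_n^{\times}(i)$, and the evaluation of the minimum at $K=[1,i]$ giving $M_{[1,i]}^{w_0}-1$) do force $\widetilde{\bf M}=\tf_i{\bf M}^{w_0}$. The problem is that the entire content of the theorem sits in your step (i), and that step is not proved but only restated. Asserting that for $K\in\cM_n^{\times}(i)$ the modification of the two Lusztig coordinates ``either shifts the attained minimum by $-1$ or leaves it unchanged, the two possibilities corresponding exactly to the two arguments of the minimum'' is exactly the claim at issue; you give no mechanism explaining why the dichotomy is governed by the specific pair $M_K^{w_0}$ and $M_{s_iK}^{w_0}+c_i({\bf M}^{w_0})$. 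There are also two translation errors that would derail the computation if you attempted it. First, Theorem \ref{prop:BZ} is an isomorphism of $\ast$-crystals $\cB\to\mathcal{BZ}^{e}$, and under the complementation bijection $\mathcal{BZ}^{e}\leftrightarrow\mathcal{BZ}^{w_0}$ the operator corresponding to $\tf_i$ on $\mathcal{BZ}^{w_0}$ is $\tf_i^*$ on $\cB$, i.e.\ ${\bf a}^{(4)}$, which modifies $a_{i,l_f+1}$ and $a_{i+1,l_f+1}$ with $l_f$ the \emph{leftmost} maximizer of $A^{\ast(i)}_l$ --- not ${\bf a}^{(2)}=\tf_i{\bf a}$ and not $k_f$. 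Second, Corollary \ref{cor:BZ} expresses the $e$-BZ component $M_K({\bf a})$ as $-\dimc$ of a cokernel of a single quiver map, not as a minimum over a combinatorial family; the minimum formula is the BFZ $K$-tableau expression of Subsection 5.6, and neither is stated for $w_0$-BZ components.

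For comparison, the paper does not attempt a direct term-by-term analysis of the minimum. It first passes to the $e$-BZ form (Corollary \ref{cor:AM-e}), uses Lemma \ref{lemma:isom2} and (7.3.2) to reduce the theorem to the single identity (8.1.1) for $K\in\cM_n^{\times}(i)^*$, and then (Lemma \ref{lemma:AM-e}) shows (8.1.1) is equivalent to the identity (8.1.2) relating $M_K(\Lambda)$ to $M_K(\overline{\Lambda})$ and $M_{s_iK}(\overline{\Lambda})$ for $\overline{\Lambda}=\te_i^{\ast max}\Lambda$. The identity (8.1.2) is then proved in Proposition \ref{prop:new-AM} by a linear-algebra computation on a generic point of the Lagrangian component: one builds the injective map $\Phi$ into $\oplus_{k\in\mbox{\scriptsize out}(K)}V_k$, forms $N=\mbox{Coker}(\Phi)$, and the minimum emerges from the genericity statement $\dimc\mbox{Ker}\,\varphi=\mbox{max}\{\dimc\mbox{Ker}\,\psi-\eps_i^*(\Lambda),0\}$ together with two dimension counts identifying the two branches with $-M_K(\overline{\Lambda})$ and $-M_{s_iK}(\overline{\Lambda})-\langle h_i,\mbox{wt}(\overline{\Lambda})\rangle+\eps_i^*(\Lambda)$. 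That computation is precisely the missing core of your step (i); without it, or an equivalent verification of (BZ-1)$'$ and (BZ-2)$'$ for $\widetilde{\bf M}$ in the style of Kamnitzer, the proposal does not constitute a proof.
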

\begin{rem}{\rm
(1) If $K=[1,i]$, then we have $(\tf_i{\bf M}^{w_0})_{[1,i]}=M^{w_0}_{[1,i]}-1$. 
Indeed, $[1,i]$ is an element of $\cM_n^{\times}(i)$. Since 
$s_i[1,i]=[1,i+1]\setminus\{i\}$, we have
\begin{align*}
(\tf_i{\bf M}^{w_0})_{[1,i]}&=
\mbox{min}\left\{M^{w_0}_{[1,i]},~M^{w_0}_{[1,i+1]\setminus\{i\}}+
M_{[1,i]}^{w_0}-M^{w_0}_{[1,i+1]\setminus\{i\}}-1\right\}\\
&=\mbox{min}\left\{M^{w_0}_{[1,i]},~M^{w_0}_{[1,i]}-1\right\}\\
&=M^{w_0}_{[1,i]}-1.
\end{align*}
(2) As we already mentioned in the introduction, the above formula is 
conjectured by Anderson and Mirkovi\'c (unpublished)
(See \cite{Kam2}).
So it is called the {\it Anderson-Mirkovi\'c} ({\it AM for short}) 
{\it conjecture}.
}\end{rem}
By using the above formula, we can also calculate the explicit form of
the $\tf_i^*$-action on an $e$-BZ datum.
\begin{cor}\label{cor:AM-e}
For ${\bf M}^e=(M_K^e)\in \mathcal{BZ}^e$ we have 
$$(\tf_i^*{\bf M}^{e})_K=\left\{\begin{array}{ll}
\mbox{\em min}\left\{M^{e}_K,~M^{e}_{s_iK}+c_i^*({\bf M}^{e})\right\} &
(K\in \cM_n^{\times}(i)^*),\\
M^{e}_K & (\mbox{\em otherwise}).
\end{array}\right.$$
Here $c_i^*({\bf M}^{e})=M_{[1,i]^c}^{e}-M^{e}_{([1,i+1]\setminus\{i\})^c}-1
=M_{[i+1,n+1]}^{e}-M^{e}_{\{i\}\cup[i+2,n+1]}-1$.
\end{cor}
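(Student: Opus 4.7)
The plan is to derive the formula directly from the Anderson--Mirkovi\'c theorem (Theorem \ref{thm:AM}) by conjugating with the involution $\ast:\mathcal{BZ}^e\overset{\sim}{\to}\mathcal{BZ}^{w_0}$ introduced in Section~5.2. By the very definition of the $\ast$-Kashiwara operators on $\mathcal{BZ}^e$ we have $\tf_i^*=\ast\circ\tf_i\circ\ast$, so for any ${\bf M}^e\in\mathcal{BZ}^e$ and any $K\in\cM_n^{\times}$ I can write
$$(\tf_i^*{\bf M}^e)_K=(\tf_i{\bf M}^{e\ast})_{K^c},$$
because the $\ast$-bijection is given on components by $L\mapsto L^c$.

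Next I invoke Theorem \ref{thm:AM} applied to the $w_0$-BZ datum ${\bf M}^{e\ast}$: the right-hand side equals $M^{e\ast}_{K^c}=M^e_K$ unless $K^c\in\cM_n^{\times}(i)$, in which case it equals the minimum of $M^{e\ast}_{K^c}=M^e_K$ and $M^{e\ast}_{s_iK^c}+c_i({\bf M}^{e\ast})$. To translate these two subcases into the statement, I will verify the two simple set-theoretic identities
$$K^c\in\cM_n^{\times}(i)\;\Longleftrightarrow\;K\in\cM_n^{\times}(i)^*,\qquad (s_iK^c)^c=s_iK,$$
the first following from $\cM_n^{\times}(i)=\{L\mid i\in L,\ i+1\notin L\}$ and its starred counterpart swapping the roles of $i$ and $i+1$, the second because $s_i$ acts as the transposition $(i,i+1)$ on $[1,n+1]$ and therefore commutes with complementation.

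Finally, I unravel the constant:
$$c_i({\bf M}^{e\ast})=M^{e\ast}_{[1,i]}-M^{e\ast}_{[1,i+1]\setminus\{i\}}-1=M^e_{[1,i]^c}-M^e_{([1,i+1]\setminus\{i\})^c}-1,$$
which is exactly $c_i^*({\bf M}^e)$ after noting $[1,i]^c=[i+1,n+1]$ and $([1,i+1]\setminus\{i\})^c=\{i\}\cup[i+2,n+1]$. Substituting back yields precisely the formula of the corollary on $\cM_n^{\times}(i)^*$, and leaves $(\tf_i^*{\bf M}^e)_K=M^e_K$ otherwise.

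No serious obstacle is anticipated: the argument is purely a change of variables $K\leftrightarrow K^c$ built on top of the already-proved AM formula. The only point requiring a moment of care is checking that $s_i$ commutes with complementation, but this is immediate once $s_i$ is identified with the transposition of $i$ and $i+1$ acting on subsets of $[1,n+1]$.
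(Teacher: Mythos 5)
Your proposal is correct and is exactly the argument the paper intends: the corollary is stated as an immediate consequence of Theorem \ref{thm:AM} obtained by conjugating with the complementation bijection $\ast:\mathcal{BZ}^{w_0}\overset{\sim}{\to}\mathcal{BZ}^{e}$, translating $\cM_n^{\times}(i)$ into $\cM_n^{\times}(i)^*$ and $c_i$ into $c_i^*$ via $K\mapsto K^c$. Your verification that $s_i$ commutes with complementation and the identification of the two forms of $c_i^*({\bf M}^e)$ supply precisely the details the paper leaves implicit.
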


\subsection{Comparison 
}
As we explained above both $\cB$ (with the $\ast$-crystal structure) and 
$\mathcal{BZ}^{e}$ are crystal which are isomorphic to $B(\infty)$. Therefore,
as abstract crystals, they are isomorphic. In this subsection we will construct 
an explicit isomorphism form $\cB$ to $\mathcal{BZ}^{e}$.\\

Following \cite{BFZ}, we introduce a notion of $K$-tableau for a Maya diagram
$K\in\cM_n^{\times}$.
\begin{defn}
Let $K=\{k_1<k_2<\cdots<k_l\}\in\cM_n^{\times}$ be a Maya diagram. For such $K$,
we define a $K$-tableau as an upper-triangular matrix 
$C=(c_{p,q})_{1\leq p\leq q\leq l}$ with integer entries satisfying
$$c_{p,p}=k_p\qquad (1\leq p\leq l),$$
and the usual monotonicity conditions for semi-standard tableaux:
$$c_{p,q}\leq c_{p,q+1},\qquad c_{p,q}<c_{p+1,q}.$$
\end{defn}
For a giving ${\bf i}_0$-Lusztig datum ${\bf a}=(a_{i,j})\in\cB$, let ${\bf M}
({\bf a})=(M_K({\bf a}))_{K\in \cM_n^{\times}}$ be a collection of integers
defined by
$$M_K({\bf a})=-\sum_{j=1}^l\sum_{i=1}^{k_j-1}a_{i,k_j}+
\mbox{min}\left\{\left.\sum_{1\leq p<q\leq l}a_{c_{p,q},c_{p,q}+(q-p)}
~\right|~C=(c_{p,q})\mbox{ is a $K$-tableau }\right\}$$
and denote the map ${\bf a}\mapsto {\bf M}({\bf a})$ by $\Psi$.
\begin{prop}[\cite{BFZ}]
For any ${\bf a}\in\cB$, $\Psi({\bf a})={\bf M}({\bf a})$ is an $e$-BZ datum.
Moreover $\Psi:\cB\to \mathcal{BZ}^{e}$ is a bijection.
\end{prop}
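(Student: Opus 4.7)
The plan is to split the statement into two assertions: (A) $\mathbf{M}({\bf a})$ is an $e$-BZ datum for every ${\bf a} \in \cB$, and (B) $\Psi$ is bijective.

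For (A), I would first verify (BZ-0)''. Taking $K = [1,i]$ gives $k_p = p$ for $1 \le p \le i$, and the constraints $c_{p,p} = p$, $c_{p,q} < c_{p+1,q}$, $c_{p,q} \le c_{p,q+1}$ force $c_{p,q} = p$ for every $1 \le p \le q \le i$. The unique such $K$-tableau contributes $\sum_{1 \le p < q \le i} a_{p,q}$, exactly cancelling the linear prefix $-\sum_{p=1}^{i}\sum_{r=1}^{p-1} a_{r,p}$, so $M_{[1,i]}({\bf a}) = 0$. To prove (BZ-1)' and (BZ-2)', I would follow the philosophy of \cite{BFZ}: the expression defining $M_K({\bf a})$ is the tropical (min-plus) shadow of a subtraction-free rational formula for a flag minor $\Delta_K$ of the unipotent matrix obtained by substituting the Lusztig coordinates into the one-parameter subgroup factorization indexed by ${\bf i}_0$. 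Under the tropical dictionary that replaces sums by minima and products by sums, the trivial termwise inequality between appropriate products of flag minors tropicalizes to (BZ-1)', while the three-term Pl\"ucker relation $\Delta_{Kik}\Delta_{Kj} = \Delta_{Kij}\Delta_{Kk} + \Delta_{Kjk}\Delta_{Ki}$, being subtraction-free, tropicalizes to the min-plus equality in (BZ-2)'.

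For (B), I would establish injectivity by recovering each Lusztig coordinate $a_{i,j}$ as an explicit integer linear combination of the $M_K({\bf a})$'s: concretely, evaluating the defining formula for $K$ ranging over the chain of Maya diagrams cut out by the prefixes $s_{i_1}\cdots s_{i_k}\cdot [1,i_k]$ of ${\bf i}_0$ and forming successive differences reads off each individual $a_{i,j}$. Surjectivity then follows because both $\cB$ and $\mathcal{BZ}^e$ are in bijection with $B(\infty)$ (via the material of Sections 3 and 5 recalled above) and $\Psi$ is weight-preserving with finite weight-graded pieces. The principal obstacle is the tropicalization step, since the min-plus identity in (BZ-2)' must be verified with exact equality, not merely inequality: one has to identify the optimal $K\cup\{i,k\}$-tableau attaining the left-hand side and decompose it combinatorially into witnesses for one of the two summands on the right. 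This is the combinatorial core of the argument and essentially the content of the BFZ proof of the three-term relations for flag minors.
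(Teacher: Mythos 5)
The paper offers no proof of this proposition: it is imported wholesale from \cite{BFZ} (the bracketed citation in the proposition header is the entire ``proof''), so there is no internal argument to measure yours against. Your outline is a faithful reconstruction of the Berenstein--Fomin--Zelevinsky route, and the one step you actually carry out, (BZ-0)$''$, is correct and complete: for $K=[1,i]$ the column strictness $c_{1,q}<c_{2,q}<\cdots<c_{q,q}=q$ forces $c_{p,q}\le p$, row weakness forces $c_{p,q}\ge c_{p,p}=p$, and the unique tableau contributes $\sum_{1\le p<q\le i}a_{p,q}$, cancelling the linear prefix. The remaining steps are correctly identified but deferred, and you are candid about that. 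For a self-contained argument the essential missing bridge is the very first assertion of your tropicalization paragraph: that the minimum over $K$-tableaux \emph{is} the tropicalization of the flag minor $\Delta_K$ of the matrix built from the one-parameter subgroups along ${\bf i}_0$. In \cite{BFZ} this identification is itself a theorem (via the Lindstr\"om lemma, matching monomials of $\Delta_K$ with families of non-intersecting paths and then with $K$-tableaux); without it neither the subtraction-free inequality behind (BZ-1)$'$ nor the three-term Pl\"ucker identity behind (BZ-2)$'$ can be transported to the $M_K$'s, and the well-definedness of tropicalization for subtraction-free expressions is also used silently.

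On bijectivity you genuinely diverge from \cite{BFZ}: there the inverse of $\Psi$ is written down explicitly (the Chamber Ansatz), which yields injectivity and surjectivity in one stroke, whereas you propose injectivity via chamber minors along ${\bf i}_0$ and then surjectivity by counting, using that $\cB$ and $\mathcal{BZ}^{e}$ have weight-graded pieces of equal finite cardinality because both are parametrized by $B(\infty)$. That counting argument is sound and softer than the explicit inversion, but note that it requires weight-preservation of $\Psi$; this must be taken from the direct computation $M_{[i+1,n+1]}({\bf a})=-m_i$ inside the proof of Lemma \ref{lemma:isom1} rather than from that lemma as stated, since its statement (and the identity $\mbox{wt}({\bf M}({\bf a}))=\sum_i M_{[1,i]^c}({\bf a})\alpha_i$ used there) already presupposes that ${\bf M}({\bf a})$ is an $e$-BZ datum.
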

In this article, we prove the next theorem.
\begin{thm}\label{prop:BZ}
The bijection $\Psi:\cB\to \mathcal{BZ}^{e}$ is an isomorphism of crystals
with respect to $\ast$-crystal structures. 
\end{thm}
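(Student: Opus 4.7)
The plan is to factor the map $\Psi:\cB\to\mathcal{BZ}^{e}$ through the Lagrangian (quiver) realization (c) of $B(\infty)$ from Kashiwara--Saito \cite{KS}. Since the explicit isomorphism between realization (c) and the ${\bf i}_0$-Lusztig-datum realization $\cB$ is already known (via the Reineke-style identification of irreducible components of the Lusztig nilpotent variety with generic dimension vectors for the type-$A_n$ Dynkin quiver), the problem reduces to showing that the induced composition $\text{(c)}\to\cB\to\mathcal{BZ}^{e}$ is a morphism of crystals with respect to the $\ast$-crystal structures. The technical bridge for this reduction is the quiver-theoretic description of BZ data proved in Section 6 (Corollary \ref{cor:BZ}), which reinterprets each component $M_K({\bf a})$ as a natural dimension datum attached to the representation of the Dynkin quiver determined by ${\bf a}$.

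First I would verify compatibility of $\Psi$ with the weight and the $e$-normalization. Evaluating the tableau minimum on $K=[1,i]$ gives $0$ (only the trivial $K$-tableau occurs), so $e$-normalization is automatic, and evaluating on $K=[1,i+1]\setminus\{i\}$ and comparing with the weight formula of Section 4.1 yields $\mbox{wt}(\Psi({\bf a}))=\mbox{wt}({\bf a})$. Next I would check that $\Psi$ intertwines $\tf_i^*$. By Corollary \ref{cor:AM-e}, $\tf_i^*$ on the BZ side only modifies entries $M_K^{e}$ with $K\in\cM_n^{\times}(i)^*$, via a two-term minimum controlled by $c_i^*({\bf M}^{e})$. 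On the $\cB$ side, $\tf_i^*{\bf a}={\bf a}^{(4)}$ alters only $a_{i,l_f+1}$ and $a_{i+1,l_f+1}$, which, under the quiver picture of Section 6, amounts to replacing one copy of the indecomposable supported on $[i+1,l_f]$ by one copy of the indecomposable supported on $[i,l_f]$. Inserting this local modification into the tableau minimum defining $M_K({\bf a})$ and matching it against the AM-type minimum of Corollary \ref{cor:AM-e} yields the intertwining. Once $\tf_i^*$ is matched, compatibility with $\te_i^*$ follows automatically from axiom (C3), and the equality of $\eps_i^*$ values is then forced by the matching of weights and $\tf_i^*$.

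The main obstacle will be the combinatorial bookkeeping in the tableau minimum defining $\Psi$: controlling how the ``optimal'' $K$-tableau moves under the replacement ${\bf a}\mapsto{\bf a}^{(4)}$ requires interpreting $K$-tableaux as filtrations of the underlying quiver representation, and identifying the minimum with a canonical geometric invariant. This is precisely what the Berenstein--Fomin--Zelevinsky framework \cite{BFZ}, imported through Section 6, is designed to supply. Once this translation is in place, reconciling the two independently defined minima --- the one inside $M_K({\bf a})$ and the one in Corollary \ref{cor:AM-e} --- reduces to tracking short exact sequences of type-$A_n$ Dynkin quiver representations, for which the Lagrangian realization of \cite{KS} furnishes the required tools.
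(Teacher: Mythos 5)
Your overall skeleton overlaps with the paper's (reduce to checking the weight, the normalization, and the intertwining of $\tf_i^*$, using the quiver description of $M_K$ from Section 6 and the Lagrangian model of \cite{KS}), but at the decisive step you take a route that both hides the real difficulty and misses the simplification the paper is built on. You propose to verify that $\Psi$ intertwines $\tf_i^*$ by computing $M_K(\tf_i^*{\bf a})$ for $K\in\cM_n^{\times}(i)^*$ and matching it against the two-term minimum of Corollary \ref{cor:AM-e}. That matching \emph{is} the Anderson--Mirkovi\'c formula itself (formula (8.1.1) of the paper), i.e.\ exactly the statement whose proof occupies all of Section 8 (Lemma \ref{lemma:AM-e} and Proposition \ref{prop:new-AM}, with the auxiliary maps $\Phi$, $\psi$, $\varphi$ and a genericity argument). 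Saying it ``reduces to tracking short exact sequences'' is not a proof; moreover the local modification ${\bf a}\mapsto{\bf a}^{(4)}$ is simple only in the $\Omega_0$-picture, while $M_K$ for general $K$ is computed after applying the transition map $R_{{\bf i}_0}^{\bf i}$ to a reduced word adapted to $\Omega(K)$, where the modification is no longer local. Finally, importing Corollary \ref{cor:AM-e} (which rests on Kamnitzer's Theorem \ref{thm:AM}) into the proof of Theorem \ref{prop:BZ} would make the paper's Section 8, which derives the AM formula \emph{from} Theorem \ref{prop:BZ}, circular as a ``new proof.''

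The point you are missing is Kamnitzer's uniqueness statement (Proposition \ref{prop:Kas-op}, transported to $e$-BZ data in Corollary \ref{cor:ast-crys}): $\tf_i^*{\bf M}^e$ is the \emph{unique} $e$-BZ datum with $(\tf_i^*{\bf M}^e)_{[1,i]^c}=M^e_{[1,i]^c}-1$ and $(\tf_i^*{\bf M}^e)_K=M^e_K$ for all $K\notin\cM_n^{\times}(i)^*$. Hence one never has to compute the components indexed by $\cM_n^{\times}(i)^*$ at all: it suffices to check (7.3.1), which follows from the weight computation, and (7.3.2), namely $M_K(\tf_i^*{\bf a})=M_K({\bf a})$ for $K\notin\cM_n^{\times}(i)^*$. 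The latter is the easy direction, and the paper proves it geometrically by reducing to $M_K(\te_i^{*\max}\Lambda_{\bf a})=M_K(\Lambda_{\bf a})$ and inspecting the relevant paths in $\Omega(K)$ at a generic point. Separately, the equalities of wt, $\eps_i^*$ and $\vphi_i^*$ (Lemma \ref{lemma:isom1}) are established by a direct $K$-tableau computation rather than being deduced a posteriori as you suggest. If you replace your AM-matching step by the uniqueness argument, your outline becomes essentially the paper's proof; as written, the central step is a gap.
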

To prove this theorem, it is enough to show the next two lemmas.
\begin{lemma}\label{lemma:isom1}
For any ${\bf a}\in\cB$, we have 
$$\mbox{\rm wt}({\bf M}({\bf a}))=\mbox{\rm wt}({\bf a}),\quad
\eps_i^*({\bf M}({\bf a}))=\eps_i^*({\bf a}),\quad
\vphi_i^*({\bf M}({\bf a}))=\vphi_i^*({\bf a}).$$
\end{lemma}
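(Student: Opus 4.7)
My plan is to reduce each equality to a direct computation of $M_K({\bf a})$ for a short list of Maya diagrams, using the BFZ formula for $\Psi$ together with the $\ast$-duality $\mathcal{BZ}^e\cong\mathcal{BZ}^{w_0}$. First, using the bijection $\ast:\mathcal{BZ}^e\to\mathcal{BZ}^{w_0}$ and the definitions of $\mbox{wt}$ and $\eps_i$ on $\mathcal{BZ}^{w_0}$ recalled in \S 5.3--5.4, I would rewrite
$$\mbox{wt}({\bf M}({\bf a}))=\sum_{i\in I}M_{[i+1,n+1]}({\bf a})\,\alpha_i,$$
$$\eps_i^{\ast}({\bf M}({\bf a}))=-M_{[i+1,n+1]}({\bf a})-M_{\{i\}\cup[i+2,n+1]}({\bf a})+M_{[i+2,n+1]}({\bf a})+M_{[i,n+1]}({\bf a}).$$
So only the four diagrams $[j,n+1]$ ($j=i,i+1,i+2$) and $\{i\}\cup[i+2,n+1]$ need to be evaluated.

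The first key observation is that $M_{[j,n+1]}({\bf a})=-m_{j-1}$ for every $j$. Indeed, for $K=[j,n+1]$ the diagonal condition $c_{p,p}=j+p-1$ together with weak row and strict column monotonicity forces the \emph{unique} $K$-tableau $c_{p,q}=j+p-1$; its tableau sum $\sum_{j\le r<s\le n+1}a_{r,s}$ cancels exactly the $r\ge j$ contributions in the first sum of the BFZ formula, leaving $-\sum_{r=1}^{j-1}\sum_{s=j}^{n+1}a_{r,s}=-m_{j-1}$. In particular $\mbox{wt}({\bf M}({\bf a}))=-\sum_i m_i\alpha_i=\mbox{wt}({\bf a})$.

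The heart of the proof is the computation of $M_K({\bf a})$ for $K=\{i\}\cup[i+2,n+1]$. The same rigidity argument shows that on rows $p\ge 2$ the tableau is forced to be $c_{p,q}=i+p$, so the only freedom is in row $1$. The constraints $i=c_{1,1}\le c_{1,q}<c_{2,q}=i+2$ together with weak monotonicity along row $1$ force $c_{1,\bullet}$ to be a non-decreasing $\{i,i+1\}$-valued sequence, hence parametrised by a single threshold $s\in\{1,\ldots,n-i+1\}$ with $c_{1,q}=i$ for $q\le s$ and $c_{1,q}=i+1$ for $q>s$. A short computation of the resulting tableau sum $T(s)$ gives the discrete difference $T(s)-T(s-1)=a_{i,\,i+s-1}-a_{i+1,\,i+s}$; under the substitution $l=i+s-1$ this is precisely $A^{\ast(i)}_{l-1}({\bf a})-A^{\ast(i)}_{l}({\bf a})$. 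Telescoping from $s=1$ yields
$$\min_{s}T(s)=T(1)+A^{\ast(i)}_i({\bf a})-\max_{i\le l\le n}A^{\ast(i)}_l({\bf a}).$$

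Substituting the four values into the formula for $\eps_i^{\ast}({\bf M}({\bf a}))$ and expanding $m_i-m_{i\pm 1}$ as sums of the $a_{r,s}$'s, all explicit $a_{r,s}$-terms cancel and one is left with $\max_{i\le l\le n}A_l^{\ast(i)}({\bf a})=\eps_i^{\ast}({\bf a})$. The equality $\vphi_i^{\ast}({\bf M}({\bf a}))=\vphi_i^{\ast}({\bf a})$ is then automatic from axiom (C1) applied to both $\ast$-crystal structures, since both sides equal $\eps_i^{\ast}+\langle h_i,\mbox{wt}\rangle$ and the weights already agree. I expect the main obstacle to be the bookkeeping in the threshold computation for $\{i\}\cup[i+2,n+1]$: the threshold index $s$ must be aligned correctly with the index $l$ of $A_l^{\ast(i)}$, and one must verify that the rigidity of rows $p\ge 2$ really does kill all of the non-threshold freedoms before the telescoping step can be applied.
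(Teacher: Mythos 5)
Your proposal is correct and follows essentially the same route as the paper: reduce to the four Maya diagrams $[i,n+1]$, $[i+1,n+1]$, $[i+2,n+1]$, $\{i\}\cup[i+2,n+1]$, observe that intervals admit a unique $K$-tableau giving $M_{[j,n+1]}=-m_{j-1}$, and parametrize the $K$-tableaux of $\{i\}\cup[i+2,n+1]$ by the single threshold in the first row before minimizing. The only cosmetic difference is that you organize the minimization by telescoping the differences $T(s)-T(s-1)=A^{\ast(i)}_{i+s-2}-A^{\ast(i)}_{i+s-1}$, whereas the paper converts $\sum_t a_{i,t}-\min_r\{\cdots\}$ into $\max_l A^{\ast(i)}_l$ directly; both yield the same cancellation.
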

\begin{lemma}\label{lemma:isom2}
For any ${\bf a}\in\cB$, we have 
$$\te_i^*({\bf M}({\bf a}))={\bf M}(\te_i^*{\bf a}),\quad
\tf_i^*({\bf M}({\bf a}))={\bf M}(\tf_i^*{\bf a}).$$
Here we set ${\bf M}(0)=0$.
\end{lemma}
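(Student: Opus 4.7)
The theorem reduces immediately to Lemmas \ref{lemma:isom1} and \ref{lemma:isom2}, so my plan is to attack both lemmas by unpacking the $K$-tableau definition of $M_K({\bf a})$ and using the characterization of the $\ast$-crystal structure on $\mathcal{BZ}^e$ from Corollary \ref{cor:ast-crys}: $\te_i^*$ and $\tf_i^*$ are uniquely pinned down by their effect on the single value $M^e_{[1,i]^c}=M^e_{[i+1,n+1]}$ together with the invariance of $M^e_K$ for $K\notin\cM_n^\times(i)^*$.

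For Lemma \ref{lemma:isom1}, I would begin with the observation that for the interval Maya diagram $K=[i+1,n+1]$ the monotonicity constraints $c_{p,q}\geq c_{p,q-1}$ and $c_{p,q}<c_{p+1,q}$ force a unique $K$-tableau, namely $c_{p,q}=i+p$. Hence the minimum in the definition of $M_K$ is attained at this tableau and a direct computation collapses to the clean identity $M_{[i+1,n+1]}({\bf a})=-m_i$. Combined with $\mathrm{wt}({\bf M}^e)=\sum_{i}M^e_{[i+1,n+1]}\alpha_i$, this yields the weight equality. The formulas for $\eps_i^*$ and $\vphi_i^*$ reduce, via $\eps_i^*({\bf M}^e)=\eps_i({\bf M}^{e\ast})$, to a four-term expression in $M^e_K$ with $K$ ranging over $\{[i+1,n+1],\,[i+2,n+1],\,[i,n+1],\,\{i\}\cup[i+2,n+1]\}$; I would compute each of these four $K$-tableau minima explicitly and, after cancellation, recognize the result as $\max_{i\leq l\leq n}A^{*(i)}_l({\bf a})$.

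For Lemma \ref{lemma:isom2} I would combine Corollary \ref{cor:ast-crys} with Theorem \ref{thm:PBW}: the latter tells us that $\tf_i^*$ on $\cB$ modifies only the two entries $a_{i,l_f+1}$ and $a_{i+1,l_f+1}$ (by $+1$ and $-1$ respectively), so there are two things to verify. First, $M_{[i+1,n+1]}$ must drop by exactly $1$; this is immediate from $M_{[i+1,n+1]}({\bf a})=-m_i$ together with the observation that the perturbation increases $m_i$ by $1$, since $a_{i,l_f+1}$ is inside the defining sum of $m_i$ while $a_{i+1,l_f+1}$ is not. Second, $M_K({\bf a})$ must be invariant whenever $K\notin\cM_n^\times(i)^*$, i.e.\ whenever $i\in K$ or $i+1\notin K$. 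The parallel argument, replacing $l_f$ by $l_e$ and the signs accordingly, handles $\te_i^*$.

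The main obstacle will be the invariance check in the preceding paragraph: the perturbation affects $M_K({\bf a})$ both through its explicit first sum and through the minimum over $K$-tableaux, whose optimizer may itself shift when the two entries move. I would attempt an exchange argument on optimal $K$-tableaux, showing that the hypothesis $i\in K$ or $i+1\notin K$ forces each occurrence of a cell carrying $a_{i,l_f+1}$ to be paired in the optimizer with a cell carrying $a_{i+1,l_f+1}$, so that the $+1$ and $-1$ perturbations cancel on the optimizing tableau. If this purely combinatorial approach proves too delicate, the alternative is the quiver-theoretic route previewed in the introduction: via the quiver interpretation of BZ data from Section 6 (Corollary \ref{cor:BZ}), $M_K({\bf a})$ becomes essentially a Hom-dimension in the category of representations of the Dynkin quiver, and the invariance of $M_K$ under the $\tf_i^*$-move can then be read off from a short-exact-sequence computation, matching the strategy that the paper indicates for Section 7.
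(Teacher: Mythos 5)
Your reduction is the right one and matches the paper's: by Corollary \ref{cor:ast-crys} the operator $\tf_i^*$ on $\mathcal{BZ}^e$ is pinned down by the drop of $M_{[1,i]^c}$ by $1$ together with the invariance of $M_K$ for all $K\notin\cM_n^{\times}(i)^*$, and your verification of the first condition (via $M_{[i+1,n+1]}({\bf a})=-m_i$ and the observation that the move $a_{i,l_f+1}\mapsto a_{i,l_f+1}+1$, $a_{i+1,l_f+1}\mapsto a_{i+1,l_f+1}-1$ raises $m_i$ by exactly $1$) is correct and equivalent to the paper's argument, which instead uses that $\mbox{wt}(\tf_i^*{\bf a})=\mbox{wt}({\bf a})-\alpha_i$. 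A minor misattribution: the fact that $\tf_i^*$ touches only those two entries is the definition of ${\bf a}^{(4)}$ in Section 4, not a consequence of Theorem \ref{thm:PBW}.

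The genuine gap is the invariance statement $M_K(\tf_i^*{\bf a})=M_K({\bf a})$ for $K\notin\cM_n^{\times}(i)^*$, which is the entire content of the lemma and which you leave at the level of a strategy. Your primary proposal --- an exchange argument pairing, in an optimal $K$-tableau, cells carrying $a_{i,l_f+1}$ with cells carrying $a_{i+1,l_f+1}$ --- meets a concrete obstruction: a cell $(p,q)$ contributes $a_{c_{p,q},\,c_{p,q}+(q-p)}$, so the two perturbed entries are read off from cells lying on \emph{different} diagonals, $q-p=l_f+1-i$ versus $q-p=l_f-i$, with different required diagonal values $c_{p,q}=i$ resp.\ $i+1$; there is no natural pairing inside a single tableau, the minimizing tableau can itself change under the perturbation, and the explicit prefactor $-\sum_j\sum_{s<k_j}a_{s,k_j}$ also moves. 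The paper explicitly declines the direct calculation and takes your fallback route, but two ingredients absent from your sketch are essential there: first, the reduction from $\tf_i^*$ to $\te_i^{*\,max}$ (since $\te_i^{*\,max}\tf_i^*\Lambda_{\bf a}=\te_i^{*\,max}\Lambda_{\bf a}$, one only has to show $M_K(\te_i^{*\,max}\Lambda_{\bf a})=M_K(\Lambda_{\bf a})$, which follows from the commutative diagram identifying the images of $B_{\sigma(s_m\leftarrow t_m)}$ and $\overline{B}_{\sigma(s_m\leftarrow t_m)}$); and second, the identity $M_K({\bf a})=M_K(\Lambda_{\bf a})$ itself, which needs the independence (7.1.1) of the irreducible component from the choice of adapted reduced word, because the orientation $\Omega(K)$ appearing in Corollary \ref{cor:BZ} varies with $K$. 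Until these are supplied, the ``short-exact-sequence computation'' is a heading, not a proof.
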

\vskip 1mm
\noindent
{\it Proof of Lemma \ref{lemma:isom1}}.
%
Firstly let us compute $\mbox{wt}({\bf M}({\bf a}))$. 
Since ${\bf M}({\bf a})$ is an $e$-BZ datum we have
\begin{align*}
\mbox{wt}({\bf M}({\bf a}))&
=\sum_{i\in I}M_{[1,i]^c}({\bf a})\alpha_i\\
&=\sum_{i\in I}M_{[i+1,n+1]}({\bf a})\alpha_i.
\end{align*}
For $K=[i+1,n+1]$, there exist a unique $K$-tableau 
$$C=(c_{p,q})_{1\leq p\leq q\leq n+1-i}=\left(\begin{array}{cccccccc}
i+1 & i+1 & \cdots & \cdots & i+1\\
      & i+2 & i+2  & \cdots & i+2\\
      &       & \ddots & \ddots & \vdots\\
      &       &        & n      & n\\
      &       &        &        & n+1
\end{array}\right).$$
That is, $c_{p,q}=i+p~(1\leq p\leq q\leq n+1-i)$. Therefore, for any $i\in I$, 
we have
\begin{align*}
M_{[i+1,n+1]}&=-\sum_{t=i+1}^{n+1}\sum_{s=1}^{t-1}a_{s,t}
+\sum_{1\leq p<q\leq n+1-i} a_{i+p,i+p+(q-p)}\\
&=-\sum_{t=i+1}^{n+1}\sum_{s=1}^{i}a_{s,t}\\
&=-m_i.
\end{align*}
Here we set $m_0=m_{n+1}=0$.
This equalities says that $\mbox{\rm wt}({\bf M}({\bf a}))=
\mbox{\rm wt}({\bf a})$.\\

Nextly let us calculate $\eps_i^*({\bf M}({\bf a}))$. We have
\begin{align*}
\eps_i^*({\bf M}({\bf a}))&=\eps_i({\bf M}({\bf a})^*)\\
&=-M_{[i+1,n+1]}({\bf a})-M_{\{i\}\cup [i+2,n+1]}({\bf a})
+M_{[i+2,n+1]}({\bf a})+M_{[i,n+1]}({\bf a}).
\end{align*}
From the proof of the first formula we already know
$$M_{[k+1,n+1]}({\bf a})=-\sum_{t=k+1}^{n+1}\sum_{s=1}^{k}a_{s,t}
\quad(k=i-1,i,i+1).$$
For $K=\{i\}\cup [i+2,n+1]$, the set of all 
$K$-tableaux is given by
$\left\{C^{(r)}\right\}_{1\leq r\leq n+1-i}$
where
$$C^{(r)}=\left(c_{p,q}^{(r)}\right)_{1\leq p\leq q\leq n+1-i}
=\left(\begin{array}{cccccccc}
i & c_{1,2}^{(r)} & \cdots & \cdots & c_{1,n+1-i}^{(r)}\\
      & i+2 & i+2  & \cdots & i+2\\
      &       & \ddots & \ddots & \vdots\\
      &       &        & n      & n\\
      &       &        &        & n+1
\end{array}\right)$$
and 
$$c_{1,q}^{(r)}=\left\{\begin{array}{ll}
i & (2\leq q\leq r),\\
i+1 & (r<q\leq n+1-i).
\end{array}\right.$$
Since 
$$\sum_{q=2}^{n+1-i}a_{c_{1,q}^{(r)},c_{1,q}^{(r)}+(q-1)}=
\sum_{q=i+1}^{i+r-1}a_{i,q}+\sum_{q=i+r+1}^{n+1}a_{i+1,q},$$
we have
\begin{align*}
M_{\{i\}\cup [i+2,n+1]}({\bf a})&=-\sum_{s=1}^{i-1}a_{s,i}
-\sum_{t=i+2}^{n+1}\sum_{s=1}^{t-1}a_{s,t}+\mathop{\mbox{min}}_{1\leq r\leq n+1-i}
\left\{\sum_{1\leq p<q\leq l}a_{c_{p,q}^{(r)},c_{p,q}^{(r)}+(q-p)}\right\}\\
&=-\sum_{s=1}^{i-1}a_{s,i}
-\sum_{t=i+2}^{n+1}\sum_{s=1}^{t-1}a_{s,t}+\sum_{q=i+3}^{n+1}\sum_{p=i+2}^{q-1}a_{p,q}\\
&\qquad\qquad+ 
\mathop{\mbox{min}}_{1\leq r\leq n+1-i}
\left\{\sum_{q=2}^{n+1-i}a_{c_{1,q}^{(r)},c_{1,q}^{(r)}+(q-1)}\right\}\\
&=-\sum_{s=1}^{i-1}a_{s,i}-\sum_{t=i+2}^{n+1}\sum_{s=1}^{i+1}a_{s,t}
+\mathop{\mbox{min}}_{1\leq r\leq n+1-i}
\left\{\sum_{q=i+1}^{i+r-1}a_{i,q}+\sum_{q=i+r+1}^{n+1}a_{i+1,q}\right\}.
\end{align*}
Putting together all formulas, we have
\begin{align*}
\eps_i^*({\bf M}({\bf a}))&=\sum_{t=i+1}^{n+1}\sum_{s=1}^{i}a_{s,t}-
\sum_{t=i}^{n+1}\sum_{s=1}^{i-1}a_{s,t}-\sum_{t=i+2}^{n+1}\sum_{s=1}^{i+1}a_{s,t}
+\sum_{s=1}^{i-1}a_{s,i}+
\sum_{t=i+2}^{n+1}\sum_{s=1}^{i+1}a_{s,t}\\
&\qquad\qquad
-\mathop{\mbox{min}}_{1\leq r\leq n+1-i}
\left\{\sum_{q=i+1}^{i+r-1}a_{i,q}+\sum_{q=i+r+1}^{n+1}a_{i+1,q}\right\}\\
&=\sum_{t=i+1}^{n+1}a_{i,t}-\mathop{\mbox{min}}_{1\leq r\leq n+1-i}
\left\{\sum_{q=i+1}^{i+r-1}a_{i,q}+\sum_{q=i+r+1}^{n+1}a_{i+1,q}\right\}\\
&=\mathop{\mbox{max}}_{1\leq r\leq n+1-i}
\left\{\sum_{q=i+r}^{n+1}a_{i,q}-\sum_{q=i+r+1}^{n+1}a_{i+1,q}\right\}\\
&=\mathop{\mbox{max}}_{1\leq r\leq n+1-i}
\left\{A_{i-1+r}^{\ast(i)}({\bf a})\right\}\\
&=\eps_i^*({\bf a}).
\end{align*}
Finally let us prove $\vphi_i^*({\bf M}({\bf a}))=\vphi_i^*({\bf a})$. But it
is clear by the first and second formulas.
\hspace*{10mm}\hfill $\square$
\vskip 3mm
We can prove Lemma \ref{lemma:isom2} by direct calculation. But in this article
we give another proof by using a Lagrangian constriction of $B(\infty)$, which
we will explain later. (See Subsection 7.3.)
\section{Quivers of type $A_n$}
\subsection{Quivers and their representations} 
Let $(I,H)$ be the double quiver of type $A_n$. Here $I=\{1,2,\cdots,n\}$ is
the set of vertices and $H$ is the set of arrows. If $\tau\in H$ is the arrow
from $i$ to $j$, we denote $\out(\tau)=i$ and $\inn(\tau)=j$. For that 
$\tau\in H$, let $\overline{\tau}$ be the arrow from $j$ to $i$. The map
$\tau\mapsto \overline{\tau}$ defines an involution of $H$. An orientation
$\Omega$ is a subset of $H$ such that $\Omega\cap\overline{\Omega}=\phi$ and
$\Omega\cup\overline{\Omega}=H$. Then $(I,\Omega)$ is a Dynkin quiver of type
$A_n$. 

Let ${\bf V}=(V,B)$ be a representation of the quiver $(I,\Omega)$. Here
$V=\oplus_{i\in I}V_i$ be a finite dimensional $I$-graded complex vector space
with the dimension vector $\dim V=(\dimc V_i)_{i\in I}\in\nz_{\geq 0}^I$, 
and $B=(B_{\tau})_{\tau\in\Omega}$ is a collection of linear maps
$B_{\tau}:V_{\sout(\tau)}\to V_{\sinn(\tau)}$. 
We denote by $M\Omega$ 
the category of representations of the quiver $(I,\Omega)$. Let 
${\bf V}=(V,B)$, ${\bf V}'=(V',B')\in M\Omega$. A morphism 
$\phi=(\phi_i)_{i\in I}$ form ${\bf V}$ to 
${\bf V}'$ is a collection of linear maps $\phi_i:V_i\to V_i'$ such that
$\phi_{\sinn(\tau)}B_{\tau}=B_{\tau}'\phi_{\sout(\tau)}$ for any $\tau\in\Omega$. 
It is well-known that $M\Omega$ is a Krull-Schmidt category. That is, any
object in $M\Omega$ has a unique indecomposable decomposition.
For $i\in I$ let ${\bf e}(i;\Omega)$ be a representation of 
$(I,\Omega)$ defined by $V_i=\nc$ and $V_j=0$ for $j\ne i$. 
This is simple and any simple representation isomorphic to ${\bf e}(i;\Omega)$,
for a unique $i$. 

Assume that $i\in I$ is a sink ({\it resp.} a source) of an orientation 
$\Omega$. That is, there is no arrow $\tau\in\Omega$ such that $\out(\tau)=i$
({\it resp.} $\inn(\tau)=i$). We denote by $\mbox{sink}(\Omega)$ ({\it resp.}
$\mbox{source}(\Omega)$) the set of all sink ({\it resp.} source) vertices. 
Let $s_i\Omega$ be the orientation obtained from
$\Omega$ by reversing each arrow $\tau$ such that $\inn(\tau)=i$ ({\it resp.}
$\out(\tau)=i$).   
\begin{defn}
Fix an orientation $\Omega$. A reduced word ${\bf i}=(i_1,\cdots,i_N)$ of $w_0$
is said to be adapted to $\Omega$ if $i_k$ is a sink of 
$\Omega_k=s_{i_{k-1}}\cdots s_1\Omega$ for $1\leq k\leq N$.
\end{defn}

For a representation of a quiver ${\bf V}=(V,B)$, we set $\dim {\bf V}=\dim V$.
From now on, we identify the dimension vector 
$\dim V=(\dimc V_i)_{i\in I}\in\nz_{\geq 0}^I$ with an element of 
$Q_+=\oplus_{i\in I}\nz_{\geq 0}\alpha_i$ by
$$(\dimc V_i)_{i\in I}\mapsto \sum_{i\in I}(\dimc V_i)\alpha_i.$$

\begin{prop}{\cite{L1}}
{\rm (1)} For a giving orientation $\Omega$, there exist a reduced word 
${\bf i}$ of $w_0$ adapted to $\Omega$.
\vskip 1mm
\noindent
{\rm (2)} For each $\beta\in \Delta^+$, there is a 
unique indecomposable representation {\rm ({\em up to isomorphism})} 
${\bf e}({\beta};\Omega)$ such that $\dim {\bf e}({\beta};\Omega)=\beta$. 
Moreover any indecomposable representation is isomorphic to 
${\bf e}({\beta};\Omega)$ for a unique $\beta$ {\rm ({\em Gabriel's theorem})}. 
\vskip 1mm
\noindent
{\rm (3)} If $\beta >_{~{\bf i}}\beta'$, we have
$\mbox{\rm Hom}_{M\Omega}({\bf e}({\beta};\Omega),{\bf e}({\beta'};\Omega))=0$.
\end{prop}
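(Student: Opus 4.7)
The plan is to establish the three parts in sequence: part (1) by a direct inductive construction using sinks, part (2) by reducing the classification of indecomposables to the simple case via Bernstein-Gelfand-Ponomarev (BGP) reflection functors, and part (3) by propagating a Hom-vanishing statement through those same functors.

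For part (1), since $(I,\Omega)$ is a finite acyclic quiver of type $A_n$ it always admits at least one sink. I would pick a sink $i_1$ of $\Omega_1:=\Omega$, set $\Omega_2:=s_{i_1}\Omega_1$, and iterate: at each stage $k$ the orientation $\Omega_k$ is still of type $A_n$, hence has a sink $i_k$, and we set $\Omega_{k+1}:=s_{i_k}\Omega_k$. The key point is that at each step the sink condition forces $s_{i_1}s_{i_2}\cdots s_{i_{k-1}}(\alpha_{i_k})$ to be a positive root, so the partial product $s_{i_1}\cdots s_{i_k}$ remains reduced at every stage. A bookkeeping argument in type $A_n$ shows that the process yields exactly $N=n(n+1)/2$ reflections before one reaches $w_0$, producing the required adapted reduced word.

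For part (2), I would invoke the BGP reflection functors $\Sigma_i^{\pm}:M\Omega\to M(s_i\Omega)$, which are equivalences of the full subcategories of representations having no simple summand at $i$, and which act as $s_i$ on dimension vectors. Fixing an adapted reduced word ${\bf i}=(i_1,\ldots,i_N)$ from part (1), I would define
\[
{\bf e}(\beta_k;\Omega):=\Sigma_{i_1}^-\Sigma_{i_2}^-\cdots\Sigma_{i_{k-1}}^-\bigl({\bf e}(i_k;\Omega_k)\bigr),
\]
where ${\bf e}(i_k;\Omega_k)$ is the simple at the sink $i_k$ of $\Omega_k$. The dimension-vector identity gives $\dim {\bf e}(\beta_k;\Omega)=s_{i_1}\cdots s_{i_{k-1}}(\alpha_{i_k})=\beta_k$, and indecomposability is preserved at each step. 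Since there are $N=|\Delta^+|$ such representations with pairwise distinct dimension vectors, Krull-Schmidt together with a count of indecomposables (bounded via positive-definiteness of the Tits form in Dynkin type) forces these to exhaust the indecomposables up to isomorphism, yielding Gabriel's theorem.

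For part (3), the ordering $<_{\bf i}$ on $\Pi=\Delta^+$ coincides with the order in which the positive roots $\beta_k$ appear in the construction above. Given $\beta>_{\bf i}\beta'$ (say $\beta=\beta_k$ with $k<\ell$ where $\beta'=\beta_\ell$, after matching conventions), I would peel off $k-1$ BGP functors: by the equivalence they induce, the Hom space in question is isomorphic to $\mathrm{Hom}_{M\Omega_k}({\bf e}(i_k;\Omega_k),\,\Sigma_{i_{k-1}}^+\cdots\Sigma_{i_1}^+{\bf e}(\beta_\ell;\Omega))$, reducing the problem to showing that a simple at a sink has no maps into a certain indecomposable in $M\Omega_k$. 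Because $i_k$ is a sink, the simple ${\bf e}(i_k;\Omega_k)$ is injective in $M\Omega_k$, so this Hom equals the $i_k$-component of the target, which one checks vanishes by tracing through the definition of the target as $\Sigma_{i_k}^-\cdots\Sigma_{i_{\ell-1}}^-({\bf e}(i_\ell;\Omega_\ell))$: the outermost functor $\Sigma_{i_k}^-$ produces representations with zero stalk at $i_k$ in the source orientation $\Omega_k$. The main obstacle is this last step, namely the careful verification that conventions (sink vs.\ source, $\Sigma^+$ vs.\ $\Sigma^-$, the direction of $<_{\bf i}$) align so that the desired vanishing emerges cleanly; once conventions are pinned down, the argument is essentially a classical property of BGP functors.
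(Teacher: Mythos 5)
The paper does not actually prove this proposition; it is quoted from Lusztig \cite{L1} without argument, so there is no internal proof to compare against and your sketch has to stand on its own. Your overall strategy (sink-by-sink construction of an adapted word, BGP reflection functors for Gabriel's theorem, and propagation of a Hom-vanishing through those functors) is indeed the standard route. Parts (1) and (2) are right in outline, but in (1) the one nontrivial point is exactly the one you assert without proof: the sink condition at each step does \emph{not} by itself force $s_{i_1}\cdots s_{i_{k-1}}(\alpha_{i_k})$ to be a positive root. The usual fix is to take an admissible ordering $i_1,\dots,i_n$ of all vertices, observe that $s_{i_n}\cdots s_{i_1}\Omega=\Omega$, and prove that the periodic word truncated at length $N$ is reduced using properties of the Coxeter element $c=s_{i_1}\cdots s_{i_n}$ (or, equivalently, by identifying the roots produced with the dimension vectors of the $N$ distinct indecomposables); some such argument is the actual content of (1) and cannot be waved away as bookkeeping.

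Part (3) as written is broken, and not merely by a convention mismatch. First, with the paper's definition, $\beta>_{\bf i}\beta'$ means $\beta=\beta_k$, $\beta'=\beta_\ell$ with $k>\ell$; in that case ${\bf e}(\beta_\ell;\Omega)$ is annihilated by $\Sigma^+_{i_\ell}\cdots\Sigma^+_{i_1}$, so peeling off $k-1$ functors is not licensed by the equivalence (the functors are only fully faithful on modules with no simple summand at the reflected vertex, and the second argument acquires exactly such a summand at stage $\ell$). Second, the mechanism you propose proves the wrong vanishing: the simple at a sink is \emph{projective}, not injective, so $\mathrm{Hom}_{M\Omega_k}({\bf e}(i_k;\Omega_k),Y)\cong Y_{i_k}$, and $Y_{i_k}$ does \emph{not} vanish for $Y$ in the image of $\Sigma^-_{i_k}$ --- that stalk is a cokernel which is generically nonzero. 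Concretely, for $\Omega\colon 1\to 2$ and ${\bf i}=(2,1,2)$ one has $\beta_1=\alpha_2<_{\bf i}\beta_2=\alpha_1+\alpha_2$ and $\mathrm{Hom}(S_2,P_1)=\mathbb{C}\neq 0$, so the direction you are computing genuinely fails to vanish. The correct argument peels off only $\ell-1$ functors, giving
\begin{equation*}
\mathrm{Hom}_{M\Omega}\bigl({\bf e}(\beta_k;\Omega),{\bf e}(\beta_\ell;\Omega)\bigr)\cong
\mathrm{Hom}_{M\Omega_\ell}\bigl(\Sigma^-_{i_\ell}\cdots\Sigma^-_{i_{k-1}}(S_{i_k}),\,S_{i_\ell}\bigr),
\end{equation*}
and then uses that any module in the image of $\Sigma^-_{i_\ell}$ has its stalk at the sink $i_\ell$ generated by the images of the incoming arrows (it is a cokernel with jointly surjective structure maps), hence admits no nonzero map \emph{onto} $S_{i_\ell}$. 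So the vanishing comes from maps \emph{into} the simple at the sink, not out of it; please rework (3) along these lines.
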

\subsection{Orientations arising from Maya diagrams}
Any Maya diagram
$K\in\cM_n^{\times}$ can be written as a disjoint union of intervals
$$\begin{array}{c}
K=[s_1+1,t_1]\sqcup [s_2+1,t_2]\sqcup\cdots\sqcup[s_l+1,t_l]\\
(0\leq s_1<t_1<s_2<t_2<\cdots <s_l<t_l\leq n+1);\end{array}$$
the interval $K_m=[s_m+1,t_m]~(1\leq m\leq l)$ will be called the 
$m$-th component of $K$. Define two subsets $\out(K)$ and $\inn(K)$ of 
$[1,n]$ by
$$\out(K)=\{t_m|~1\leq m\leq l\}\cap [1,n],
\quad \inn(K)=\{s_m|~1\leq m\leq l\}\cap [1,n].$$
We remark that $\out(K)\cap\inn(K)=\phi$. 
Introduce two subsets $I_t$ and $I_s$ as follows:
$$\begin{array}{lll}
I_t&=&\left\{\begin{array}{ll}
\out(K)\cup\{1,n\} & (s_1\geq 2,~t_l=n+1),\\
\out(K)\cup\{1\} & (s_1\geq 2,~t_l\leq n),\\
\out(K)\cup\{n\} & (s_1\leq 1,~t_l=n+1),\\
\out(K) & (s_1\leq 1,~t\leq n).
\end{array}\right.\\
& &\\
I_s&=&\left\{\begin{array}{ll}
\inn(K)\cup\{1,n\} & (s_1=0,~t_l\leq n-1),\\
\inn(K)\cup\{1\} & (s_1=0,~t_l\geq n),\\
\inn(K)\cup\{n\} & (s_1\geq 1,~t_l\leq n-1),\\
\inn(K) & (s_1\geq 1,~t_l\geq n).
\end{array}\right.
\end{array}$$
\begin{defn}
{\rm (1)} 
In the above setting, there exist a unique orientation $\Omega(K)$ so that
$\mbox{\em source}(\Omega(K))=I_t$ and $\mbox{\em sink}(\Omega(K))=I_s$. 
We call $\Omega(K)$ the orientation arising from a Maya diagram 
$K\in\cM^{\times}_n$.
\vskip 1mm
\noindent
{\rm (2)} Let $s_K=\mbox{\em min}\{k~|~k\not\in K\}$ and
$t_K=\mbox{\em max}\{k~|~k\in K\}$. Define 
$\beta_K\in \Delta^+\cup\{0\}$
by 
$$\beta_K=\left\{\begin{array}{ll}
\alpha_{s_K}+\alpha_{s_K+1}+\cdots+\alpha_{t_K-1} & (s_K<t_K),\\
0 & (\mbox{otherwise})
\end{array}\right.$$
and we call it the characterizing positive root of a Maya diagram $K$.
\end{defn}
\begin{rem}{\rm (1) In general, we have
$$\mbox{out}(K)\subset \mbox{source}(\Omega(K)),\qquad
\mbox{in}(K)\subset \mbox{sink}(\Omega(K)).$$
(2) The characterizing positive root $\beta_K=0$ if and only if $K=[1,t_1]$ for
some $1\leq t_1\leq n$.
}\end{rem}
\begin{ex}{\rm
Let $n=17$ and $K=[3,4]\sqcup [7,8]\sqcup [10,12]\sqcup [14,15]$. Then we have
$$\mbox{out}(K)=\{4,8,12,15\},\qquad \mbox{in}(K)=\{2,6,9,13\}.$$
Since $s_1=2$ and $t_l=t_4=15$, we have
$$I_t=\out(K)\cup\{1\}=\{1,4,8,12,15\},\qquad  
I_s=\inn(K)\cup\{17\}=\{2,6,9,13,17\}.$$
In this case, the orientation $\Omega(K)$ is given as follows:
\begin{center}
\setlength{\unitlength}{1mm}
\begin{picture}(123,10)
\put(0,5){$\Omega(K)=$}
\put(17,6){\vector(1,0){4}}\put(27,6){\vector(-1,0){4}}
\put(33,6){\vector(-1,0){4}}\put(35,6){\vector(1,0){4}}
\put(41,6){\vector(1,0){4}}\put(51,6){\vector(-1,0){4}}
\put(57,6){\vector(-1,0){4}}\put(59,6){\vector(1,0){4}}
\put(69,6){\vector(-1,0){4}}\put(75,6){\vector(-1,0){4}}
\put(81,6){\vector(-1,0){4}}\put(83,6){\vector(1,0){4}}
\put(93,6){\vector(-1,0){4}}\put(99,6){\vector(-1,0){4}}
\put(101,6){\vector(1,0){4}}\put(107,6){\vector(1,0){4}}
\put(15,1){{\scriptsize $1$}}
\put(21,1){{\scriptsize $2$}}
\put(33,1){{\scriptsize $4$}}
\put(45.5,1){{\scriptsize $6$}}
\put(57.5,1){{\scriptsize $8$}}
\put(63.5,1){{\scriptsize $9$}}
\put(80.5,1){{\scriptsize $12$}}
\put(86.5,1){{\scriptsize $13$}}
\put(98.5,1){{\scriptsize $15$}}
\put(110.5,1){{\scriptsize $17$}}
\put(16,6){\circle*{2}}\put(22,6){\circle{2}}\put(27.5,5){$\cdot$}
\put(34,6){\circle*{2}}\put(39.5,5){$\cdot$}
\put(46,6){\circle{2}}\put(51.5,5){$\cdot$}
\put(58,6){\circle*{2}}\put(64,6){\circle{2}}
\put(69.5,5){$\cdot$}\put(75.5,5){$\cdot$}\put(82,6){\circle*{2}}
\put(88,6){\circle{2}}\put(93.5,5){$\cdot$}\put(100,6){\circle*{2}}
\put(105.5,5){$\cdot$}\put(112,6){\circle{2}}
\put(115,5){.}
\end{picture}
\end{center}
Here $\circ$ is a sink and $\bullet$ is a source．

Since $s_K=1$ and $t_K=15$, the characterizing positive root $\beta_K$ is 
given by
$$\beta_K=\sum_{i=1}^{14}\alpha_i.$$
}\end{ex}
\subsection{From Lusztig data to $e$-BZ data}
Let ${\bf i}$ be a reduced word adapted to the orientation $\Omega(K)$ and
consider the set of all ${\bf i}$-Lusztig data 
$$\cB^{\bf i}
=\left\{\left.{\bf a}^{\bf i}=(a^{\bf i}_{i,j})_{(i,j)\in \Pi}~\right|~
a^{\bf i}_{i,j}\in\nz_{\geq 0}\right\}.$$ 
Recall the identification 
$\Delta^+\overset{\sim}{\to}\Pi$ (see Subsection 2.4) and denote the image of
$\beta\in \Delta^+$ by $(i_{\beta},j_{\beta})\in \Pi$. Set
${\bf e}((i_{\beta},j_{\beta});\Omega(K))={\bf e}({\beta};\Omega(K))$. 
Then, for each ${\bf V}\in M\Omega(K)$, there is a unique 
${\bf a}^{\bf i}\in\cB^{\bf i}$ such that ${\bf V}$ is isomorphic to 
${\bf V}({\bf a}^{\bf i})$. Here
$${\bf V}({\bf a}^{\bf i})=
\mathop{\oplus}_{(i,j)\in \Pi}{\bf e}((i,j);\Omega(K))^{\oplus a_{i,j}^{\bf i}}.$$
We introduce a non-positive integer 
$$M_K({\bf V}({\bf a}^{\bf i}))=-\dimc\mbox{Hom}_{M\Omega(K)}
\left({\bf V}({\bf a}^{\bf i}),{\bf e}(\beta_K;\Omega(K))\right).$$
\begin{lemma}\label{lemma:quiver-BZ}
{\rm (1)} We have
$$M_K({\bf V}({\bf a}^{\bf i}))=-\sum_{(i,j)\in\Pi;i\not\in K,j\in K}a_{i,j}^{\bf i}.
$$
\noindent
{\rm (2)} Denote ${\bf V}({\bf a}^{\bf i})=
(\oplus_iV_i,(B_{\tau})_{\tau\in\Omega(K)})$. Then we have
$$\sum_{(i,j)\in\Pi;i\not\in K,j\in K}a_{i,j}^{\bf i}=
\dimc\mbox{\rm Coker}\left(\mathop{\oplus}_{k\in \mbox{\rm out}(K)}V_k
\overset{\oplus B_{\sigma}}{\longrightarrow}
\mathop{\oplus}_{l\in \mbox{\rm in}(K)}V_l\right).$$
Here $\sigma$ is a path in $\Omega(K)$ form some $k\in \mbox{\rm out}(K)$ to 
some $l\in\mbox{\rm in}(K)$
\end{lemma}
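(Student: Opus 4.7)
The plan is to reduce both parts to statements about individual indecomposable summands of ${\bf V}({\bf a}^{\bf i})$ via its Krull--Schmidt decomposition
\[
{\bf V}({\bf a}^{\bf i}) = \bigoplus_{(i,j) \in \Pi} {\bf e}((i,j); \Omega(K))^{\oplus a_{i,j}^{\bf i}},
\]
and then verify each piece by a direct calculation inside the category of interval modules over the type-$A$ quiver $\Omega(K)$.

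For (1), additivity of $\dimc\mbox{Hom}_{M\Omega(K)}(-,\, {\bf e}(\beta_K; \Omega(K)))$ in the first variable reduces the statement to
\[
\dimc\mbox{Hom}_{M\Omega(K)}\bigl({\bf e}((i,j); \Omega(K)),\, {\bf e}(\beta_K; \Omega(K))\bigr) = \begin{cases} 1 & \text{if } i \notin K \text{ and } j \in K,\\ 0 & \text{otherwise.} \end{cases}
\]
A morphism between these two interval modules is a family of scalars $(\phi_p)$ indexed by $p$ in the overlap $[i,j-1]\cap [s_K, t_K-1]$, subject to the compatibility $\phi_{\sinn(\tau)} = \phi_{\sout(\tau)}$ along each arrow $\tau$ of $\Omega(K)$ with both endpoints in the overlap, together with a vanishing condition $\phi_p = 0$ at each boundary vertex $p$ of the overlap whose neighbor in $\Omega(K)$ exits the $V$-interval while staying in the $N$-interval, or exits the $N$-interval while staying in the $V$-interval. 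A four-case analysis keyed on whether $i \in K$ and whether $j \in K$, combined with the alternating source/sink pattern of $\Omega(K)$, shows that as soon as $i \in K$ or $j \notin K$ some such boundary arrow is present and kills $\phi$ entirely by propagation, while in the remaining case $i \notin K, j \in K$ the solution space is exactly one-dimensional.

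For (2), the Krull--Schmidt decomposition induces a splitting of the map $\bigoplus_{k\in\out(K)} V_k \to \bigoplus_{l \in \inn(K)} V_l$ into a direct sum of contribution maps indexed by the indecomposable summands of ${\bf V}({\bf a}^{\bf i})$. Since cokernel commutes with direct sums of linear maps, it suffices to prove that the contribution from a single ${\bf e}((i,j); \Omega(K))$ has cokernel of dimension $1$ when $i \notin K, j \in K$, and dimension $0$ otherwise. For a single indecomposable, this contribution is a $0/1$-matrix whose nonzero entries encode exactly those paths of $\Omega(K)$ between $\out(K) \cap [i,j-1]$ and $\inn(K) \cap [i,j-1]$ that remain inside $[i,j-1]$. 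The alternating source/sink pattern produces a near-bidiagonal matrix whose rank is read off in each of the four cases; only when $i \notin K$ and $j \in K$ does the matrix have exactly one more row than its rank.

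The main obstacle is the uniform treatment of the various edge cases---when $i=1$ or $j=n+1$, when a $K$-component consists of a single vertex, when $s_K=1$ or $t_K=n+1$, or when $\beta_K=0$---since the source/sink structure of $\Omega(K)$ is itself defined through several subcases. A conceptually cleaner alternative would unify (1) and (2) by constructing a natural linear isomorphism between $\mbox{Hom}_{M\Omega(K)}({\bf V}({\bf a}^{\bf i}), {\bf e}(\beta_K; \Omega(K)))$ and the cokernel in (2); however, the naive pairing $(\phi, (x_l)) \mapsto \sum_l \phi_l(x_l)$ fails to descend to the cokernel because of a sign mismatch at each source with two adjacent sinks, so the indecomposable-by-indecomposable case analysis seems the most direct route.
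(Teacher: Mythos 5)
Your proposal is correct and follows essentially the same route as the paper: both parts are reduced via the Krull--Schmidt decomposition to a computation for a single indecomposable interval module ${\bf e}((i,j);\Omega(K))$, where the Hom-space dimension in (1) and the cokernel dimension in (2) are each shown to equal $1$ precisely when $i\notin K$ and $j\in K$, and $0$ otherwise. The paper's own proof carries this out by the same propagation-of-scalars argument for the Hom computation and the same explicit identification of which path maps $B_{\sigma}$ are identities for the cokernel computation.
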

The proof of this lemma will be given in the next subsection. \\

The next proposition is a easy consequence of the results of 
Berenstein, Fomin and Zelevinsky \cite{BFZ}. 
\begin{prop}\label{prop:BFZ}
Let ${\bf i}$ be a reduced word adapted to the orientation $\Omega(K)$
and ${\bf a}\in \cB$ an ${\bf i}_0$-Lusztig datum. Set 
${\bf a}^{\bf i}=R_{{\bf i}_0}^{\bf i}({\bf a})$. 
Here $R_{{\bf i}_0}^{\bf i}$ is the transition map
from ${\bf i}_0$ to ${\bf i}$. Then we have
$$M_{K}({\bf a})=-\sum_{(i,j)\in\Pi;i\not\in K,j\in K}a_{i,j}^{\bf i}.$$
Here ${\bf M}({\bf a})=(M_K({\bf a}))_{K\in\cM_n^{\times}}$ is the
$e$-BZ datum defined in 5.6. 
\end{prop}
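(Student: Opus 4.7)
The plan is to reduce Proposition \ref{prop:BFZ} to the identification of two computations of the same $e$-BZ coordinate $M_K$: the $K$-tableau minimum of Section 5.6 (evaluated in the ${\bf i}_0$-coordinates ${\bf a}$) and the quiver-theoretic expression from Lemma \ref{lemma:quiver-BZ} (evaluated in the ${\bf i}$-coordinates ${\bf a}^{\bf i}$). Once this identification is in hand, Lemma \ref{lemma:quiver-BZ}(1) applied to ${\bf V}({\bf a}^{\bf i})$ gives the result immediately, since
\[
-\sum_{(i,j)\in\Pi;\,i\notin K,\,j\in K}a_{i,j}^{\bf i}
\;=\; M_K({\bf V}({\bf a}^{\bf i}))
\;=\; -\dimc\mbox{Hom}_{M\Omega(K)}\bigl({\bf V}({\bf a}^{\bf i}),\,{\bf e}(\beta_K;\Omega(K))\bigr).
\]
So the task becomes proving $M_K({\bf a})=M_K({\bf V}({\bf a}^{\bf i}))$ for ${\bf a}^{\bf i}=R_{{\bf i}_0}^{\bf i}({\bf a})$.

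Next I would invoke the framework of \cite{BFZ}. In that work the $e$-BZ coordinate $M_K$ is realised as the tropicalization of a flag minor of a unipotent matrix, and for every reduced word ${\bf i}$ a piecewise-linear formula is given expressing this tropicalization in terms of the ${\bf i}$-Lusztig coordinates. The $K$-tableau minimum in the definition of $M_K({\bf a})$ is precisely BFZ's formula in the special case ${\bf i}={\bf i}_0$. Because the transition map $R_{{\bf i}_0}^{\bf i}$ is the tropicalization of a positive (subtraction-free) rational map, pushing the tableau minimum through $R_{{\bf i}_0}^{\bf i}$ yields BFZ's piecewise-linear formula in the general ${\bf i}$-coordinates, again computing $M_K$. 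Thus it suffices to show that, for ${\bf i}$ adapted to $\Omega(K)$, this piecewise-linear expression collapses to the single linear form $\sum_{i\notin K,\,j\in K}a_{i,j}^{\bf i}$.

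The collapse step rests on two ingredients. The first is the explicit description of $\Omega(K)$ in Section 6.2, which forces the set of positive roots $\beta=(i,j)\in\Pi$ with $\mbox{Hom}_{M\Omega(K)}({\bf e}(\beta;\Omega(K)),{\bf e}(\beta_K;\Omega(K)))\neq 0$ to be exactly $\{(i,j)\mid i\notin K,\,j\in K\}$; this is the content of Lemma \ref{lemma:quiver-BZ}. The second is that adaptedness of ${\bf i}$ to $\Omega(K)$ makes the PBW ordering $\leq_{~{\bf i}}$ on $\Pi$ refine the homological ordering on indecomposables of $M\Omega(K)$, so that all off-diagonal $K$-tableaux contribute Lusztig variables $a_{i,j}^{\bf i}$ that are strictly larger in the tropical order than those of the canonical diagonal tableau $c_{p,q}=k_p$. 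Evaluating BFZ's minimum at this diagonal tableau yields exactly the desired sum.

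The main obstacle is this last degeneration: verifying combinatorially that, in the adapted case, the BFZ minimum is attained uniquely at the diagonal $K$-tableau and that its value is $\sum_{i\notin K,\,j\in K}a_{i,j}^{\bf i}$. Concretely, this demands a bijection between off-diagonal fillings $(c_{p,q})$ and strict improvements in the linear form, which is a careful unpacking of BFZ's chamber ansatz in type $A$ combined with Gabriel's theorem applied to the explicit orientation $\Omega(K)$ of Section 6.2. Once established, the proposition follows at once from Lemma \ref{lemma:quiver-BZ}(1).
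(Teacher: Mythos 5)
First, a point of comparison: the paper itself contains no proof of this proposition --- it is simply declared to be ``an easy consequence of the results of Berenstein, Fomin and Zelevinsky'' --- so your attempt is filling a gap the paper leaves open rather than paralleling an existing argument. Your opening reduction is correct: by Lemma \ref{lemma:quiver-BZ}(1) the right-hand side equals $M_K({\bf V}({\bf a}^{\bf i}))$, and you rightly identify the $K$-tableau minimum of Subsection 5.6 as a tropicalized minor in the ${\bf i}_0$-coordinates and $R_{{\bf i}_0}^{\bf i}$ as the tropicalization of a subtraction-free transition map.

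The gap is the ``collapse'' step, which is the entire content of the proposition. You claim the minimum is attained at the diagonal $K$-tableau $c_{p,q}=k_p$ and that its value is $\sum_{i\notin K,\,j\in K}a^{\bf i}_{i,j}$. But the $K$-tableaux index the terms of the minimum in the ${\bf i}_0$-variables ${\bf a}$, not in the ${\bf i}$-variables: the diagonal tableau contributes $-\sum_{j}\sum_{i}a_{i,k_j}+\sum_{p<q}a_{k_p,k_p+(q-p)}$, an expression in ${\bf a}$ that is not the asserted linear form in ${\bf a}^{\bf i}$, and in the ${\bf i}_0$-coordinates the minimum is in general not attained at the diagonal tableau (already for $K=\{i\}\cup[i+2,n+1]$ the paper's own computation of $\eps_i^*$ shows that the minimizing tableau $C^{(r)}$ depends on ${\bf a}$). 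Conversely, once you compose with the piecewise-linear map $R_{{\bf i}_0}^{\bf i}$, the resulting expression in the variables $a^{\bf i}_{i,j}$ is no longer a minimum over $K$-tableaux of linear forms, so there is no ``diagonal term'' left to evaluate. What actually has to be proved is that for every reduced word ${\bf i}$ adapted to $\Omega(K)$ the relevant minor of the product of elementary matrices ordered by ${\bf i}$ is a single monomial whose exponent vector is the characteristic vector of $\{(i,j)\mid i\notin K,\ j\in K\}$ (equivalently, that $K$ is a chamber set for every such ${\bf i}$, with the exponents matching the Hom computation (6.4.1)). Neither the remark that $\leq_{~{\bf i}}$ refines the homological order nor Lemma \ref{lemma:quiver-BZ} by itself supplies this; as written, the proposal restates the proposition in tropical language at the decisive point rather than proving it.
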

Combining the above results, we have the following corollary:
\begin{cor}\label{cor:BZ}
In the above setting, we have
$$M_{K}({\bf a})=M_K({\bf V}({\bf a}^{\bf i}))=
-\dimc\mbox{\rm Coker}\left(\mathop{\oplus}_{k\in \mbox{\rm out}(K)}V_k
\overset{\oplus B_{\sigma}}{\longrightarrow}
\mathop{\oplus}_{l\in \mbox{\rm in}(K)}V_l\right).$$
\end{cor}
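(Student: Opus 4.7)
The plan is to obtain this corollary by simply chaining the three identities that have just been assembled, since its statement is essentially a formal combination of Proposition \ref{prop:BFZ} and Lemma \ref{lemma:quiver-BZ}. First I would invoke Proposition \ref{prop:BFZ} to rewrite
$$M_K({\bf a}) = -\sum_{(i,j)\in \Pi,\ i\notin K,\ j\in K} a^{\bf i}_{i,j},$$
where ${\bf i}$ is a reduced word adapted to the orientation $\Omega(K)$ and ${\bf a}^{\bf i}=R^{\bf i}_{{\bf i}_0}({\bf a})$. This step is the one that ties the $e$-BZ datum (which is defined using the lexicographic reduced word ${\bf i}_0$) to the $\Omega(K)$-adapted Lusztig datum ${\bf a}^{\bf i}$.

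Next I would apply Lemma \ref{lemma:quiver-BZ}(1) to identify the combinatorial sum $\sum_{i\notin K,\ j\in K} a^{\bf i}_{i,j}$ with $-M_K({\bf V}({\bf a}^{\bf i}))$, that is, with $\dimc \mbox{Hom}_{M\Omega(K)}({\bf V}({\bf a}^{\bf i}),{\bf e}(\beta_K;\Omega(K)))$. This gives the first equality $M_K({\bf a})=M_K({\bf V}({\bf a}^{\bf i}))$ of the corollary. Finally, I would apply Lemma \ref{lemma:quiver-BZ}(2) to replace this $\mbox{Hom}$-dimension by the cokernel dimension
$$\dimc \mbox{Coker}\left(\bigoplus_{k\in\out(K)} V_k \xrightarrow{\ \oplus B_{\sigma}\ } \bigoplus_{l\in\inn(K)} V_l\right),$$
yielding the second equality. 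Stringing the three identities together proves the corollary.

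The only non-formal point to be checked while writing this down is that the orientation used in ${\bf V}({\bf a}^{\bf i})$ is truly $\Omega(K)$, so that $\out(K)\subset\mbox{source}(\Omega(K))$ and $\inn(K)\subset\mbox{sink}(\Omega(K))$ match the sources and sinks appearing in the cokernel map; this is guaranteed by the construction of $\Omega(K)$ in Definition 6.2.1 together with the remark following it. Consequently the real work is not in this corollary but in its two inputs: Proposition \ref{prop:BFZ}, which is essentially a specialization of the Berenstein--Fomin--Zelevinsky transition formula to reduced words adapted to $\Omega(K)$, and Lemma \ref{lemma:quiver-BZ}, whose proof is deferred to the next subsection. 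Once those are in place the corollary follows by pure bookkeeping, and there is no single hard step to single out here.
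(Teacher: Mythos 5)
Your proposal is correct and matches the paper exactly: the corollary is stated immediately after the sentence ``Combining the above results, we have the following corollary,'' and its proof is precisely the chaining of Proposition \ref{prop:BFZ} with parts (1) and (2) of Lemma \ref{lemma:quiver-BZ} that you describe. Your remark that the only point to verify is the compatibility of $\mbox{out}(K)$ and $\mbox{in}(K)$ with the sources and sinks of $\Omega(K)$ is also consistent with the paper's setup.
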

\subsection{Proof of Lemma \ref{lemma:quiver-BZ}}
Let us prove the formula (1). It is enough to show that
$$\dimc\mbox{Hom}_{M\Omega(K)}
\left({\bf e}((i,j);\Omega(K)),{\bf e}(\beta_K;\Omega(K))\right)=
\left\{\begin{array}{ll}
1 & (i\not\in K\mbox{ and }j\in K),\\
0 & (\mbox{otherwise}).
\end{array}\right.\eqno{(6.4.1)}$$
Let us denote ${\bf e}((i,j);\Omega(K))=
(\oplus V_k',(B_{\tau}'))$ and ${\bf e}(\beta_K;\Omega(K))
=(\oplus V_k'',(B_{\tau}''))$. Then
$$V_k'=\left\{\begin{array}{ll}
\nc & (i\leq k\leq j-1),\\
0 & (\mbox{otherwise})
\end{array}\right.\quad\mbox{and}\quad
V_k''=\left\{\begin{array}{ll}
\nc & (s_K\leq k\leq t_K-1),\\
0 & (\mbox{otherwise}).
\end{array}\right.
$$

Firstly assume $i\not\in K\mbox{ and }j\in K$. Since our quiver is of 
type $A_n$, the left hand side of $(6.4.1)$ is less than $1$. So it is enough 
to show that there is a non-trivial morphism form 
$(\oplus V_k',(B_{\tau}'))$ to $(\oplus V_k'',(B_{\tau}''))$. 
By the assumption we have $s_K\leq i<j\leq t_K$. 
Therefore we can define 
a linear map $\psi=(\psi_k):\oplus V_k'\to
\oplus V_k''$ by
$$\psi_k=\left\{\begin{array}{ll}
\mbox{id}_{\snc} & (i\leq k\leq j-1)\\
0 & (\mbox{otherwise})
\end{array}\right.$$
and it is easy to check that the above map is a non-trivial 
morphism of $M\Omega(K)$. 

Secondly let us consider the case that $i\in K\mbox{ or }j\not\in K$.
The goal is to prove 
$$\mbox{Hom}_{M\Omega(K)}\left((\oplus V_k',(B_{\tau}')),
(\oplus V_k'',(B_{\tau}''))\right)=0.\eqno{(6.4.2)}$$

Assume $i\in K$ and $s_K<i$. Then $i\geq 2$, $i+1\leq t_K$ and there is an
arrow $\tau_1$ form $i$ to $i-1$ in $\Omega(K)$. 
Let $\psi=(\psi_k)\in \mbox{Hom}_{M\Omega(K)}\left((\oplus V_k',(B_{\tau}')),
(\oplus V_k'',(B_{\tau}''))\right)$. 
Since $V_{i-1}'=0$, we have $B''_{\tau_1}\psi_i=\psi_{i-1}B_{\tau_1}'=0$.
On the other hand, $B''_{\tau_1}\ne 0$ because $s_K\leq i-1$.
Therefore we have $\psi_i=0$.

If $j=i+1$ or $t_K=i+1$, it means that the left hand side of 
$(6.4.2)$ is equal to zero. So we may assume $j>i+1$ and $t_K>i+1$. 
However one can show that $\psi_{i+1}=0$. Indeed, if there is an arrow 
$\tau_2$ form $i$ to $i+1$ in $\Omega(K)$,
then we have $\psi_{i+1}B_{\tau_2}'=B_{\tau_2}''\psi_i$ with non-trivial 
$B_{\tau_2}'$
and $B_{\tau_2}''$. Since $\psi_i=0$, we have $\psi_{i+1}=0$. On the other hand,
if there is an arrow $\tau_2$ form $i+1$ to $i$ in $\Omega(K)$, then we 
have $\psi_iB_{\tau_2}'=B_{\tau_2}''\psi_{i+1}$. Then we also have $\psi_{i+1}=0$.
By repeating this method, we have $\psi_k=0$ for any $k\in I$. 

For the other cases, we can show (6.4.2)
by similar way.\\

We will give a proof of (2). Since ${\bf V}({\bf a}^{\bf i})=
\oplus_{(i,j)\in \Pi}{\bf e}((i,j);\Omega(K))^{\oplus a_{i,j}^{\bf i}}$ is the 
indecomposable decomposition, it is enough to prove that
$$\dimc\mbox{\rm Coker}\left(\mathop{\oplus}_{k\in \mbox{\rm out}(K)}V_k'
\overset{\oplus B_{\sigma}'}{\longrightarrow}
\mathop{\oplus}_{l\in \mbox{\rm in}(K)}V_l'\right)=\left\{
\begin{array}{ll}
1 & (i\not\in K\mbox{ and }j\in K),\\
0 & (\mbox{otherwise}).
\end{array}
\right.\eqno{(6.4.3)}$$

Recall the decomposition of $K$:
$$K=K_1\sqcup\cdots\sqcup K_l,\quad \mbox{where }
K_m=[s_m+1,t_m]\quad (1\leq m\leq l).$$
 
Firstly assume $i\not\in K\mbox{ and }j\in K$. More precisely, we assume 
$t_{u-1}<i<s_u+1$ and $s_v+1\leq j\leq t_v$ with $u\leq v$. Let 
$\sigma(t_{m-1}\to s_{m})$ ({\it resp}. $\sigma(s_m\leftarrow t_m)$) be the path 
form $t_{m-1}$ to $s_{m}$ ({\it resp}. from $t_m$ to $s_m$) in $\Omega(K)$. 
Then, by the definition, we have 
$$B'_{\sigma(t_{m-1}\to s_{m})}=\left\{\begin{array}{ll}
\mbox{id}_{\snc} & (u+1\leq m\leq v),\\
0 & (\mbox{otherwise}),
\end{array}\right.\quad
B'_{\sigma(s_m\leftarrow t_m)}=\left\{\begin{array}{ll}
\mbox{id}_{\snc} & (u\leq m\leq v-1),\\
0 & (\mbox{otherwise}).
\end{array}\right.$$ 
Therefore we have
\begin{align*}
\dimc\mbox{\rm Coker}\left(\mathop{\oplus}_{k\in \mbox{\rm out}(K)}V_k'
\overset{\oplus B_{\sigma}'}{\longrightarrow}
\mathop{\oplus}_{l\in \mbox{\rm in}(K)}V_l'\right)&=
\dimc\mbox{\rm Coker}\left(\mathop{\oplus}_{m=u}^{v-1}V_{t_m}'
{\longrightarrow}
\mathop{\oplus}_{m=u}^{v}V_{s_m}'\right)\\
&=\dimc\mbox{\rm Coker}\left(\nc^{v-u-1}\hookrightarrow \nc^{v-u}\right)\\
&= 1.
\end{align*}

Secondly assume $i,j\in K$. Then there exist $u$ and $v$ with $u\leq v$ such
that $i\in K_u$ and $j\in K_v$. In this case, we have
$$B'_{\sigma(t_{m-1}\to s_{m})}=\left\{\begin{array}{ll}
\mbox{id}_{\snc} & (u+1\leq m\leq v),\\
0 & (\mbox{otherwise}),
\end{array}\right.\quad
B'_{\sigma(s_m\leftarrow t_m)}=\left\{\begin{array}{ll}
\mbox{id}_{\snc} & (u+1\leq m\leq v-1),\\
0 & (\mbox{otherwise}).
\end{array}\right.$$ 
Therefore we have
\begin{align*}
\dimc\mbox{\rm Coker}\left(\mathop{\oplus}_{k\in \mbox{\rm out}(K)}V_k'
\overset{\oplus B_{\sigma}'}{\longrightarrow}
\mathop{\oplus}_{l\in \mbox{\rm in}(K)}V_l'\right)&=
\dimc\mbox{\rm Coker}\left(\mathop{\oplus}_{m=u}^{v-1}V_{t_m}'
{\longrightarrow}
\mathop{\oplus}_{m=u+1}^{v}V_{s_m}'\right)\\
&=\dimc\mbox{\rm Coker}\left(\nc^{v-u-1}\overset{\sim}{\to} \nc^{v-u-1}\right)\\
&= 0.
\end{align*}
For the other cases, we can prove that the left hand side of (6.4.3) equals
to zero by similar arguments. Thus, the lemma is proved.
\section{Lagrangian construction of crystal basis}
\subsection{Varieties associated to quivers}
For $\nu\in Q_+$, let $\cV_{\nu}$ be the category of $I$-graded complex 
vector spaces $V$ with $\dim V=\nu$. 
For $V=\oplus_{i\in I}V_i\in \cV_{\nu}$, introduce two complex vector spaces
$$E_{V,\Omega}=\mathop{\oplus}_{\tau\in\Omega}\homc(V_{\sout(\tau)},
V_{\sinn(\tau)}),\qquad 
X_{V}=\mathop{\oplus}_{\tau\in H}\homc(V_{\sout(\tau)},V_{\sinn(\tau)}).$$
An element of $E_{V,\Omega}$ or $X_{V}$ will be denoted by 
$B=(B_{\tau})$ where 
$B_{\tau}\in \homc(V_{\sout(\tau)},V_{\sinn(\tau)})$. Define a 
symplectic form 
$\omega$ on $X_{V}$ by
$$\omega(B,B')=\sum_{\tau\in H}\eps(\tau)\mbox{tr}(B_{\overline{\tau}}B_{\tau}')$$
where $\eps(\tau)=1$ for $\tau\in\Omega$ and $\eps(\tau)=-1$ for $\tau\in
\overline{\Omega}$. We regard $X_{V}$ as the cotangent bundle 
$T^*E_{V,\Omega}$ of $E_{V,\Omega}$ via the symplectic form $\omega$.  

The group $G_{V}=\prod_{i\in I}
GL(V_i)$ acts on $E_{V,\Omega}$ and $X_{V}$ by
$$G_V\ni g=(g_i):(B_{\tau})\mapsto (g_{\sinn(\tau)}B_{\tau}g_{\sout(\tau)}^{-1}).$$
Since the action of $G_{V}$ on $X_V$ preserves the symplectic form $\omega$, 
we can consider the corresponding moment map
$\mu:X_V\to \bigl(\gtg_V\bigr)^*\cong \gtg_V$. Here $\gtg_V=\mbox{Lie }G_V$
and we identify $\gtg_V$ with its dual via the Killing form. Set
$$\Lambda_V=\mu^{-1}(0).$$
It is known that $\Lambda_V$ is a $G_V$-invariant closed Lagrangian subvariety
of $X_V$. It is clear that, for $V,~V'\in\cV_{\nu}$, there are natural 
isomorphisms $V\cong V'$, $X_V\cong X_{V'}$ and $\Lambda_V\cong \Lambda_{V'}$. 
Hence we denote them $V(\nu)$, $X(\nu)$ and $\Lambda(\nu)$, respectively.

Let $\mbox{Irr}\Lambda(\nu)$ be a set of all irreducible components of 
$\Lambda(\nu)$.
Since our quiver is of type $A_n$, 
there is a bijection from the set of all $G_{V(\nu)}$-orbits in 
$E_{V(\nu),\Omega}$ to $\mbox{Irr}\Lambda(\nu)$ defined by $\mathcal{O}\mapsto
\overline{T_{\mathcal{O}}^*E_{V(\nu),\Omega}}$. We remark that $E_{V(\nu),\Omega}$ has
finitely many $G_{V(\nu)}$-orbits because our quiver is of type $A_n$.\\
 
For $B\in E_{V(\nu),\Omega}$, a pair ${\bf V}=(V(\nu),B)$ is nothing but a 
representation of a quiver $(I,\Omega)$ with a dimension vector $\nu$.
Moreover there is a natural one to one correspondence
between isomorphism classes of representations of a quiver $(I,\Omega)$ with
a dimension vector $\nu$ and $G_{V(\nu)}$-orbits in $E_{V(\nu),\Omega}$.

Let $\Omega$ be an orientation and ${\bf i}$ a reduced word adapted to 
$\Omega$. As we mentioned before, for each ${\bf V}=(V(\nu),B)
\in M\Omega$, there is a 
unique ${\bf i}$-Lusztig datum ${\bf a}^{\bf i}\in\cB^{\bf i}$ such that 
${\bf V}$ is isomorphic to 
$${\bf V}({\bf a}^{\bf i})=
\mathop{\oplus}_{(i,j)\in \Pi}{\bf e}((i,j);\Omega)^{\oplus a_{i,j}^{\bf i}}.$$
Let $\cO_{{\bf a}^{\bf i}}$ be the $G_{V(\nu)}$-orbit of $E_{V(\nu),\Omega}$ through 
${\bf V}({\bf a}^{\bf i})$. Denote $\Lambda_{{\bf a}^{\bf i}}=
\overline{T_{\cO_{{\bf a}^{\bf i}}}^*E_{V(\nu),\Omega}}$. Then we have a bijection
$\Psi_{\bf i}:\cB^{\bf i}\overset{\sim}{\to}
\bigsqcup_{\nu\in Q^-}\mbox{Irr}\Lambda(\nu)$ defined by
${\bf a}^{\bf i}\mapsto \Lambda_{{\bf a}^{\bf i}}$.
Especially, consider the following special orientation
\begin{center}
\setlength{\unitlength}{1mm}
\begin{picture}(103,10)
\put(4,5){$\Omega_0:$}
\put(27,6){\vector(-1,0){6}}\put(17,6){\line(1,0){10}}
\put(39,6){\vector(-1,0){6}}\put(29,6){\line(1,0){10}}
\put(51,6){\vector(-1,0){6}}\put(41,6){\line(1,0){10}}
\put(54,4.5){$\cdots$}
\put(74,6){\vector(-1,0){6}}\put(64,6){\line(1,0){10}}
\put(86,6){\vector(-1,0){6}}\put(76,6){\line(1,0){10}}
\put(98,6){\vector(-1,0){6}}\put(88,6){\line(1,0){10}}
\put(15,1){{\scriptsize $1$}}
\put(27.5,1){{\scriptsize $2$}}
\put(39.5,1){{\scriptsize $3$}}
\put(71.5,1){{\scriptsize $n-2$}}
\put(83.5,1){{\scriptsize $n-1$}}
\put(98.5,1){{\scriptsize $n$}}
\put(16,6){\circle{2}}\put(28,6){\circle{2}}
\put(40,6){\circle{2}}\put(75,6){\circle{2}}
\put(87,6){\circle{2}}\put(99,6){\circle{2}}
\put(101,5){.}
\end{picture}
\end{center}
Then the lexicographically minimal reduced word ${\bf i}_0$ is adapted to
$\Omega_0$. For ${\bf a}\in\cB$, let $\cO_{\bf a}$ be the corresponding orbit 
in $E_{V(\nu),\Omega_0}$ and $\Lambda_{{\bf a}}=
\overline{T_{\cO_{{\bf a}}}^*E_{V(\nu),\Omega_0}}$. We remark that  
$$\Lambda_{\bf a}=\Lambda_{{\bf a}^{\bf i}},\eqno{(7.1.1)}$$
where ${\bf i}$ is an arbitrarily reduced word and ${\bf a}^{\bf i}=
R_{{\bf i}_0}^{\bf i}({\bf a})$.
\begin{rem}{\rm
It seems to us that the formula (7.1.1) is known for experts. However 
the proof of it was not appeared until recently. A detailed proof was
firstly given by Kimura in his Master thesis \cite{Kim}
(see Appendix A, in detail).
  
On the other hand, in 2010, Baumann and Kamnitzer give another explicit proof 
of it by using representation theory of preprojective algebra (see \cite{BK}).
}\end{rem}

For $B\in X(\nu)$ we set
$$M_K(B)=-\dimc\mbox{\rm Coker}\left(\mathop{\oplus}_{k\in \mbox{\rm out}(K)}
V(\nu)_k\overset{\oplus B_{\sigma}}{\longrightarrow}
\mathop{\oplus}_{l\in \mbox{\rm in}(K)}V(\nu)_l\right)$$
and for $\Lambda\in \mbox{Irr}\Lambda(\nu)$ define
$$M_K(\Lambda)=M_K(B)$$
by taking a generic point $B$ of $\Lambda$. By Corollary \ref{cor:BZ}, we 
immediately have the following statement.
\begin{cor}\label{cor:Lag}
Recall the setting of Corollary \ref{cor:BZ}: let ${\bf i}$ be a reduced word 
which is adapted to the orientation $\Omega(K)$ and 
${\bf a}\in\cB$ an ${\bf i}_0$-Lusztig datum. 
Set ${\bf a}^{\bf i}=R_{{\bf i}_0}^{\bf i}({\bf a})$. Then we have 
$$M_{K}({\bf a})=M_K({\bf V}({\bf a}^{\bf i}))=M_K(\Lambda_{{\bf a}^{\bf i}}).$$
\end{cor}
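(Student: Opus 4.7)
The corollary is essentially a repackaging of the preceding results, so the plan is to trace definitions carefully. The proof splits into two equalities. The first, $M_K({\bf a}) = M_K({\bf V}({\bf a}^{\bf i}))$, is literally the content of Corollary \ref{cor:BZ}, which combined with Lemma \ref{lemma:quiver-BZ}(2) also gives the cokernel expression. So the real content is to verify $M_K({\bf V}({\bf a}^{\bf i})) = M_K(\Lambda_{{\bf a}^{\bf i}})$.

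To do this, I would unwind the definition of $\Lambda_{{\bf a}^{\bf i}}$: by construction it is the closure of the conormal bundle $T^*_{\cO_{{\bf a}^{\bf i}}} E_{V(\nu),\Omega(K)}$ inside $X(\nu)$, where $X(\nu)$ is identified with $T^*E_{V(\nu),\Omega(K)}$ via the symplectic form $\omega$ attached to the orientation $\Omega(K)$ that ${\bf i}$ is adapted to. The natural projection $X(\nu) = T^*E_{V(\nu),\Omega(K)} \to E_{V(\nu),\Omega(K)}$ sends the open dense subset $T^*_{\cO_{{\bf a}^{\bf i}}}E_{V(\nu),\Omega(K)}$ of $\Lambda_{{\bf a}^{\bf i}}$ surjectively onto $\cO_{{\bf a}^{\bf i}}$. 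Hence for any generic point $B = (B_\tau)_{\tau \in H} \in \Lambda_{{\bf a}^{\bf i}}$, its $\Omega(K)$-components $(B_\tau)_{\tau \in \Omega(K)}$ define an element of $\cO_{{\bf a}^{\bf i}}$, i.e.\ a representation isomorphic to ${\bf V}({\bf a}^{\bf i})$.

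Next I would observe that the quantity $M_K(B)$ only depends on $(B_\tau)_{\tau \in \Omega(K)}$: indeed, the defining formula
$$M_K(B) = -\dimc \mbox{Coker}\left(\mathop{\oplus}_{k \in \sout(K)} V(\nu)_k \overset{\oplus B_\sigma}{\longrightarrow} \mathop{\oplus}_{l \in \sinn(K)} V(\nu)_l\right)$$
involves only compositions $B_\sigma$ along paths $\sigma$ inside the Dynkin quiver $(I,\Omega(K))$. Therefore at a generic $B \in \Lambda_{{\bf a}^{\bf i}}$ we have $M_K(B) = M_K({\bf V}({\bf a}^{\bf i}))$, and by definition this is $M_K(\Lambda_{{\bf a}^{\bf i}})$.

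There is no serious obstacle here; the only subtlety worth pinning down is that the identification $X(\nu) \cong T^*E_{V(\nu),\Omega(K)}$, and consequently the very definition of $\Lambda_{{\bf a}^{\bf i}}$ used in the statement, is taken with respect to the orientation $\Omega(K)$ adapted to ${\bf i}$ (as in the paragraph introducing $\Lambda_{{\bf a}^{\bf i}}$). This compatibility—combined with the orientation-independence formula (7.1.1)—is what allows one to pass between the Lusztig-data picture and the quiver picture, and it is exactly what makes the dim-cokernel reading of $M_K$ available on the Lagrangian side.
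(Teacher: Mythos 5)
Your argument is correct and follows the same route as the paper, which simply declares the corollary immediate from Corollary \ref{cor:BZ} together with the definitions of $M_K(B)$ and $M_K(\Lambda)$; you have merely made explicit the two points the paper leaves tacit, namely that a generic point of $\Lambda_{{\bf a}^{\bf i}}=\overline{T^*_{\cO_{{\bf a}^{\bf i}}}E_{V(\nu),\Omega(K)}}$ projects into $\cO_{{\bf a}^{\bf i}}$ and that $M_K(B)$ depends only on the components $(B_\tau)_{\tau\in\Omega(K)}$. No gap.
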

Combining the above corollary with (7.1.1), we have
$$M_{K}({\bf a})=M_K(\Lambda_{{\bf a}}).\eqno{(7.1.2)}$$
\subsection{Lagrangian construction of $B(\infty)$}
In this subsection we will give a review of Lagrangian construction of 
$B(\infty)$ following \cite{KS}.\\

Let $\nu,\nu',\overline{\nu}\in Q_+$ with $\nu=\nu'+\overline{\nu}$.
Consider a diagram
$$\Lambda(\nu')\times\Lambda(\overline{\nu})\overset{q_1}{\longleftarrow}
\Lambda(\nu',\onu)\overset{q_2}{\longrightarrow}\Lambda(\nu).
\eqno{(7.2.1)}$$
Here $\Lambda(\nu',\onu)$ is a variety of $(B,\phi',\ophi)$, where
$B\in \Lambda(\nu)$ and $\phi'=(\phi_i'),~\ophi=(\ophi_i)$ give an exact 
sequence
$$0\longrightarrow V(\nu')_i\overset{\phi_i'}{\longrightarrow} V(\nu)
\overset{\ophi_i}{\longrightarrow}V(\onu)\longrightarrow 0$$
such that $\mbox{Im }\phi'$ is stable by $B$. Hence $B$ induces 
$B':V(\nu')\to V(\nu')$ and $\overline{B}:V(\onu)\to V(\onu)$. 
The maps $q_1$ and $q_2$ are defined by
$q_1(B,\phi',\ophi)=(B',\overline{B})$ and $q_2(B,\phi',\ophi)=B$, 
respectively.

For $i\in I$ and $\Lambda\in\mbox{Irr}\Lambda(\nu)$, set
$$\eps_i(\Lambda)=\eps_i(B)\quad\mbox{and}\quad \eps_i^*(\Lambda)=\eps_i^*(B),$$
where $B$ is a general point of $\Lambda$ and 
$$\eps_i(B)=\dimc\mbox{\rm Coker}\left(\mathop{\oplus}_{\tau;\sinn(\tau)=i}
V(\nu)_{\sout(\tau)}
\overset{\oplus B_{\tau}}{\longrightarrow}V(\nu)_i\right),$$
$$\eps_i^*(B)=\dimc\mbox{\rm Ker}\left(V(\nu)_i
\overset{\oplus B_{\tau}}{\longrightarrow}
\mathop{\oplus}_{\tau;\sout(\tau)=i}V(\nu)_{\sinn(\tau)}\right).$$
For $k,l\in \nz_{\geq 0}$, we define
$$\bigl(\mbox{Irr}\Lambda(\nu)\bigr)_{i,k}=
\{\Lambda\in \mbox{Irr}\Lambda(\nu)~|~\eps_i(\Lambda)=k\}~~\mbox{ and }~~
\bigl(\mbox{Irr}\Lambda(\nu)\bigr)_{i}^l=
\{\Lambda\in \mbox{Irr}\Lambda(\nu)~|~\eps_i^*(\Lambda)=l\}.$$ 
Assume $\onu=c\alpha_i$ ({\it resp}. $\nu'=c\alpha_i$) for 
$c\in\nz_{\geq 0}$. Since $\Lambda(c\alpha_i)=\{0\}$, we have the following 
diagrams as special cases of (7.2.1): 
$$\Lambda(\nu')\cong\Lambda(\nu')\times\Lambda(c\alpha_i)
\overset{q_1}{\longleftarrow}
\Lambda(\nu',c\alpha_i)\overset{q_2}{\longrightarrow}\Lambda(\nu),
\eqno{(7.2.2)}$$
$$\Lambda(\onu)\cong\Lambda(c\alpha_i)\times\Lambda(\onu)
\overset{q_1}{\longleftarrow}
\Lambda(c\alpha_i,\onu)\overset{q_2}{\longrightarrow}\Lambda(\nu).
\eqno{(7.2.3)}$$
It is known that the diagrams (7.2.2) and (7.2.3) induce bijections 
$$\te_i^{max}:\bigl(\mbox{\rm Irr}\Lambda(\nu)\bigr)_{i,c}
\overset{\sim}{\to}
\bigl(\mbox{\rm Irr}\Lambda(\nu')\bigr)_{i,0}
\quad\mbox{and}\quad
\te_i^{\ast max}:\bigl(\mbox{\rm Irr}\Lambda(\nu)\bigr)_{i}^c
\overset{\sim}{\to}
\bigl(\mbox{\rm Irr}\Lambda(\onu)\bigr)_{i}^0,$$
respectively. We introduce maps 
$$\te_i,\te_i^*:\bigsqcup_{\nu\in Q_+}\mbox{Irr}\Lambda(\nu)\to
\bigsqcup_{\nu\in Q_+}\mbox{Irr}\Lambda(\nu)\sqcup\{0\}\quad\mbox{and}\quad
\tf_i,\tf_i^*:\bigsqcup_{\nu\in Q_+}\mbox{Irr}\Lambda(\nu)\to
\bigsqcup_{\nu\in Q_+}\mbox{Irr}\Lambda(\nu)$$
as follows: if $c>0$ we define 
$$\begin{array}{llllllllllll}
\te_i:& \bigl(\mbox{\rm Irr}\Lambda(\nu)\bigr)_{i,c} & 
\overset{\sim}{\longrightarrow} & 
\bigl(\mbox{\rm Irr}\Lambda(\nu')\bigr)_{i,0} & 
\overset{\sim}{\longrightarrow} & 
\bigl(\mbox{\rm Irr}\Lambda(\nu+\alpha_i)\bigr)_{i,c-1},\\
\te_i^*:& \bigl(\mbox{\rm Irr}\Lambda(\nu)\bigr)_i^c & 
\overset{\sim}{\longrightarrow} & 
\bigl(\mbox{\rm Irr}\Lambda(\onu)\bigr)_i^0 & 
\overset{\sim}{\longrightarrow} & 
\bigl(\mbox{\rm Irr}\Lambda(\nu+\alpha_i)\bigr)_i^{c-1}
\end{array}$$
and $\te_i\Lambda=0$ and $\te_i^*\Lambda'=0$ for 
$\Lambda\in\bigl(\mbox{\rm Irr}\Lambda(\nu)\bigr)_{i,0}$ and
$\Lambda'\in\bigl(\mbox{\rm Irr}\Lambda(\nu)\bigr)_{i}^0$, respectively. 
Define
$$\begin{array}{llllllllllll}
\tf_i:& \bigl(\mbox{\rm Irr}\Lambda(\nu)\bigr)_{i,c} & 
\overset{\sim}{\longrightarrow} & 
\bigl(\mbox{\rm Irr}\Lambda(\nu')\bigr)_{i,0} & 
\overset{\sim}{\longrightarrow} & 
\bigl(\mbox{\rm Irr}\Lambda(\nu-\alpha_i)\bigr)_{i,c+1},\\
\tf_i^*:& \bigl(\mbox{\rm Irr}\Lambda(\nu)\bigr)_i^c & 
\overset{\sim}{\longrightarrow} & 
\bigl(\mbox{\rm Irr}\Lambda(\onu)\bigr)_i^0 & 
\overset{\sim}{\longrightarrow} & 
\bigl(\mbox{\rm Irr}\Lambda(\nu-\alpha_i)\bigr)_i^{c+1}.
\end{array}$$
\begin{thm}\cite{KS}\label{thm:KS}
{\rm (1)} 
For $\Lambda\in \mbox{\em Irr}\Lambda(\nu)$, we set
$\mbox{\rm wt}\Lambda=-\nu$, $\vphi_i(\Lambda)=\eps_i(\Lambda)+
\langle h_i,\mbox{\rm wt}\Lambda\rangle$.
Then
$(\bigsqcup_{\nu\in Q_+}\mbox{\em Irr}\Lambda(\nu);\mbox{\rm wt}, \eps_i, 
\vphi_i, \te_i, \tf_i)$ is a crystal isomorphic to
$(B(\infty);\mbox{\rm wt}, \eps_i, \vphi_i, \te_i, \tf_i)$.
More precisely, the explicit form of the isomorphism 
$\Phi:B(\infty)\overset{\sim}{\to}\bigsqcup_{\nu\in Q_+}\mbox{\em Irr}
\Lambda(\nu)$ 
is given by $\Phi=\Psi_{\bf i}\circ\Xi_{\bf i}^{-1}$.  
\\
{\rm (2)} Set $\vphi_i^*(\Lambda)=\eps_i^*(\Lambda)+
\langle h_i,\mbox{\rm wt}\Lambda\rangle$. Then 
$(\bigsqcup_{\nu\in Q_+}\mbox{\em Irr}\Lambda(\nu);\mbox{\rm wt}, \eps_i^*, 
\vphi_i^*, \te_i^*, \tf_i^*)$ is a crystal and the bijection $\Phi$ gives
an isomorphism of crystals form $(B(\infty);\mbox{\rm wt}, \eps_i^*, 
\vphi_i^*, \te_i^*, \tf_i^*)$ to it.
\end{thm}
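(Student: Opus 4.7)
The plan is to prove Theorem \ref{thm:KS} in two stages: first verify that the proposed operators give a well-defined crystal structure on $\bigsqcup_{\nu}\mbox{Irr}\Lambda(\nu)$, then exhibit the explicit isomorphism $\Phi$ with $B(\infty)$ and show it intertwines the crystal operations on both sides.

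For the first stage, I would analyse the correspondence diagram (7.2.2), namely $\Lambda(\nu')\overset{q_1}{\longleftarrow}\Lambda(\nu',c\alpha_i)\overset{q_2}{\longrightarrow}\Lambda(\nu)$, and show that $q_2$ restricted to $q_2^{-1}(\bigl(\mbox{Irr}\Lambda(\nu)\bigr)_{i,c})$ is smooth with connected fibers onto this stratum, while $q_1$ becomes an isomorphism onto $\bigl(\mbox{Irr}\Lambda(\nu')\bigr)_{i,0}$. This yields the bijection $\te_i^{\mathrm{max}}$; the analogous analysis of (7.2.3) gives $\te_i^{\ast\mathrm{max}}$. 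Composing with the trivial weight-shift step one obtains $\te_i, \tf_i, \te_i^*, \tf_i^*$, and the axioms (C1)--(C4) reduce to routine bookkeeping of dimension vectors using the defining formulas of $\eps_i(\Lambda)$ and $\eps_i^*(\Lambda)$.

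For the second stage, I would use the explicit formula $\Phi = \Psi_{\bf i}\circ\Xi_{\bf i}^{-1}$, first invoking the reduced-word independence $\Lambda_{\bf a} = \Lambda_{{\bf a}^{\bf i}}$ recorded in equation (7.1.1) so that the choice of reduced word ${\bf i}$ is immaterial. To prove $\Phi\circ\te_i = \te_i\circ\Phi$, pick ${\bf i}$ with $i_1 = i$; by Theorem \ref{thm:PBW}, $\Xi_{\bf i}$ intertwines $\te_i$ on $B(\infty)$ with the decrement of the first coordinate $c_1$. In the orientation $\Omega$ adapted to such ${\bf i}$, the vertex $i$ is a sink, so the multiplicity of the simple summand ${\bf e}(i;\Omega)$ in ${\bf V}({\bf a}^{\bf i})$ equals $\dim\mbox{Coker}(\bigoplus_{\tau;\sinn(\tau)=i}V_{\sout(\tau)}\to V_i)$, which coincides with both $\eps_i(B)$ and $c_1$. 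Consequently, the geometric $\te_i$ defined via (7.2.2) matches the decrement of $c_1$ through $\Psi_{\bf i}$, proving part (1). Part (2) follows by the mirror construction: pick ${\bf i}$ ending in $i$, use the $\ast$-PBW basis $B_{\bf i}^*$ (so that $\Xi_{\bf i}^{-1}$ intertwines $\te_i^*$ with the decrement of $c_N$), and argue through diagram (7.2.3) with the source-vertex counterpart of the above cokernel identity.

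The main obstacle is the reduced-word independence identity (7.1.1). Reversing arrows of $\Omega$ at a sink or source changes the category $M\Omega$ and redistributes its $G_{V(\nu)}$-orbits, so it is not a priori clear that the closures of the conormal bundles $\overline{T_{\cO_{{\bf a}^{\bf i}}}^*E_{V(\nu),\Omega}}$ coincide as subvarieties of $X(\nu)$ under the transition map $R_{{\bf i}_0}^{\bf i}$. As the remark following (7.1.1) notes, this identity was first established by Kimura in his Master thesis via a detailed study of BGP-type reflection functors at sinks, and independently by Baumann--Kamnitzer using the representation theory of the preprojective algebra; reproducing it requires substantial additional work. Without granting (7.1.1), the remaining alternative is Kashiwara's axiomatic characterization of $B(\infty)$ via both the $\te_i$ and the $\te_i^*$ structures, following the grand-loop induction of \cite{KS}; this would avoid explicit reliance on reduced-word independence but at the cost of a considerably longer and less transparent argument.
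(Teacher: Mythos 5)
This theorem is not proved in the paper at all: it is quoted from \cite{KS}, and the only ingredient the paper itself supplies is the reduced-word independence (7.1.1), established in Appendix A following Kimura. So there is no internal proof to compare against, and your proposal has to be judged as a reconstruction of the argument of \cite{KS}. As such, it takes a genuinely different route. In \cite{KS} the isomorphism with $B(\infty)$ is obtained abstractly, by checking Kashiwara's axiomatic characterization of $B(\infty)$ through the strict embeddings into a tensor product with $B_i$ that the correspondences (7.2.2)--(7.2.3) induce on irreducible components; the identification $\Phi=\Psi_{\bf i}\circ\Xi_{\bf i}^{-1}$ with the PBW parametrization is then a separate comparison with Lusztig's work. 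Your route instead proves the isomorphism directly by matching the geometric operators against the combinatorial ones on adapted Lusztig data. This is closer in spirit to Sections 6--7 of the present paper and yields the explicit form of $\Phi$ from the outset, at the price of needing (7.1.1) to pass between the reduced words adapted to the various sinks $i$ --- a dependence you correctly isolate, and which the paper discharges in Appendix A, so you may simply cite it rather than treat it as an open obstacle.

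Two points in your sketch need repair before it is a proof. First, you invoke Theorem \ref{thm:PBW} for an arbitrary reduced word ${\bf i}$ with $i_1=i$, but that theorem is stated only for ${\bf i}_0$; the fact you actually need is the standard identity $\eps_{i_1}(P_{\bf i}({\bf c}))=c_1$ for any reduced word beginning with $i_1$ (from \cite{S}, \cite{L1}), from which $\te_{i_1}$ and $\tf_{i_1}$ decrement and increment $c_1$. Second, when you identify $\eps_i(B)$ for a generic point $B$ of $\Lambda_{{\bf a}^{\bf i}}\subset X(\nu)$ with the multiplicity of ${\bf e}(i;\Omega)$ in ${\bf V}({\bf a}^{\bf i})$, note that the cokernel defining $\eps_i$ ranges over all arrows of the double quiver $H$ with head $i$, not only those of $\Omega$; it is precisely because $i$ is a sink of $\Omega$ (so no arrow of $\overline{\Omega}$ has head $i$, and a generic point of the conormal variety projects to a generic point of $\cO_{{\bf a}^{\bf i}}$) that the computation reduces to one inside $M\Omega$. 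With these two points made explicit, and with (7.1.1) taken from Appendix A, the sketch is sound.
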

\begin{rem}{\rm
Because of (7.1.1), the definition of the map
$\Phi:B(\infty)\overset{\sim}{\to}\bigsqcup_{\nu\in Q_+}\mbox{Irr}\Lambda(\nu)$
is independent of the choice of ${\bf i}$.
}\end{rem}
\subsection{A proof of Lemma \ref{lemma:isom2}}
We only show the second formula:
$$\tf_i^*({\bf M}({\bf a}))={\bf M}(\tf_i^*{\bf a})\quad({\bf a}\in\cB),$$ 
because the first one is proved by similar method. \\

By Corollary \ref{cor:ast-crys}, it is enough to show that
$$M_{[1,i]^c}(\tf_i^*{\bf a})=M_{[1,i]^c}({\bf a})-1
,\eqno{(7.3.1)}$$
$$M_K(\tf_i^*{\bf a})=M_{K}({\bf a})\mbox{ for all }K\in\cM_n^{\times}
\setminus\cM_n^{\times}(i)^*.\eqno{(7.3.2)}$$
It is easy to see (7.3.1). Indeed, as in the proof of Lemma \ref{lemma:isom1}, 
we have
$$
\mbox{wt}({\bf a})=\mbox{wt}({\bf M}({\bf a}))
=\sum_{i\in I}M_{[1,i]^c}({\bf a})\alpha_i.
$$ 
Similarly we have
$$\mbox{wt}(\tf_i^*{\bf a})=\sum_{i\in I}M_{[1,i]^c}(\tf_i^*{\bf a})\alpha_i.$$
Since $\cB$ is a $\ast$-crystal, we have $\mbox{wt}(\tf_i^*{\bf a})
=\mbox{wt}({\bf a})-\alpha_i.$ Therefore (7.3.1) holds.\\

We shall prove (7.3.2). Assume $K\in\cM_n^{\times}\setminus\cM_n^{\times}(i)^*.$
Namely, $i\in K$ or $i+1\not\in K$. By (7.1.2) and Theorem \ref{thm:KS},
it is enough to prove that $M_K(\tf_i^*\Lambda_{{\bf a}})=M_K(\Lambda_{{\bf a}})$
for $i\in K$ or $i+1\not\in K$. Moreover, since $\te_i^{\ast max}
(\tf_i^*\Lambda_{\bf a})=\te_i^{\ast max}\Lambda_{\bf a}$, it is enough to show 
that
$$M_K(\te_i^{\ast max}\Lambda_{{\bf a}})=M_K(\Lambda_{{\bf a}})\qquad
(i\in K\mbox{ or }i+1\not\in K).\eqno{(7.3.3)}$$
 
Assume $i\in K_m=[s_m+1,t_m]~\subset K$. Then, there are the following three 
cases; (a) $m=1$ and $s_1=1$, (b) $m=l$ and $t_m=n+1$, (c) otherwise. 
In the case (a), there is the 
path $\sigma(1\leftarrow t_1)$ from $t_1$ to $1$ in 
$\Omega(K)$ which is is trough $i$. 
Since $1$ (the end point of this path) is not an element of $\inn(K)$, 
this path does not appear in the definition of $M_K(\Lambda)$ for any 
$\Lambda$. Therefore $M_K(\te_i^{\ast max}\Lambda_{{\bf a}})
=M_K(\Lambda_{{\bf a}})$. By the similar way, we have (7.3.3) in the case (b).

Let us consider the case (c). In this case, 
the path $\sigma(s_m\leftarrow t_m)$ in $\Omega(K)$ is trough $i$ and 
$s_m\in\inn(K)$, $t_m\in\out(K)$. We remark that $s_m<i$ since $i\in K$.
For simplicity, we denote $\Lambda=\Lambda_{{\bf a}}$ and $\overline{\Lambda}=
\te_i^{\ast max}\Lambda_{{\bf a}}$. Let $\nu=\mbox{wt}(\Lambda)$ and $\onu
=\mbox{wt}(\overline{\Lambda})$, respectively.
Take a general point
$\overline{B}=(\overline{B}_{\tau})_{\tau\in H}\in\overline{\Lambda}$. Recall the
diagram (7.2.3) and take a general point $B=(B_{\tau})_{\tau\in H}\in\Lambda(\nu)$
of $q_2\circ q_1^{-1}(B)$. Then $B$ is a general point of $\Lambda$. 
By the constriction we have the following commutative diagram:
\begin{center}
\setlength{\unitlength}{1mm}
\begin{picture}(50,40)
\put(5,30){$V_{t_m}(\nu)$}\put(19,30){$\overset{\sim}{\to}$}
\put(25,30){$V_{t_m}(\onu)$}
\put(9,23){$\downarrow$}\put(29,23){$\downarrow$}
\put(6,16){$V_i(\nu)$}\put(19,16){$\twoheadrightarrow$}
\put(26,16){$V_i(\onu)$}
\put(9,9){$\downarrow$}\put(29,9){$\downarrow$}
\put(5,2){$V_{s_m}(\nu)$}\put(19,2){$\overset{\sim}{\to}$}
\put(25,2){$V_{s_m}(\onu)$.}
\put(-5,23.5){\scriptsize{$B_{\sigma(i\leftarrow t_m)}$}}
\put(33,23.5){\scriptsize{$\overline{B}_{\sigma(i \leftarrow t_m)}$}}
\put(-5,9.5){\scriptsize{$B_{\sigma(s_m\leftarrow i)}$}}
\put(33,9.5){\scriptsize{$\overline{B}_{\sigma(s_m\leftarrow i)}$}}
\put(19.5,36){\scriptsize{$\ophi_{t_m}$}}
\put(19.5,20){\scriptsize{$\ophi_{i}$}}
\put(19.5,8){\scriptsize{$\ophi_{s_m}$}}
\end{picture}
\end{center}
Therefore we have
$$\mbox{Im}(B_{\sigma(s_m\leftarrow t_m)})=
\mbox{Im}(\overline{B}_{\sigma(s_m\leftarrow t_m)})$$
and this formula tells us (7.3.3) holds.\\

For the case of $i+1\not\in K$, we have (7.3.3) by the similar method. Thus we
complete a proof of Lemma \ref{lemma:isom2}.
\section{A new proof of the Anderson-Mirkovi\'c conjecture}
\subsection{Reformulation of the Anderson-Mirkovi\'c conjecture}
Let us denote $\Lambda=\Lambda_{\bf a}$. Then Corollary \ref{cor:AM-e} can be 
written as
$$(\tf_i^*{\bf M}(\Lambda))_K=\left\{\begin{array}{ll}
\mbox{min}\left\{M_K(\Lambda),~M_{s_iK}(\Lambda)+c_i^*({\bf M}(\Lambda))
\right\} &
(K\in \cM_n^{\times}(i)^*),\\
M_K(\Lambda) & (\mbox{otherwise}).
\end{array}\right.$$
Here $c_i^*({\bf M}(\Lambda))=M_{[1,i]^c}(\Lambda)-
M_{([1,i+1]\setminus\{i\})^c}(\Lambda)-1$.
By Lemma \ref{lemma:isom2}, we already know that
$$(\tf_i^*{\bf M}(\Lambda))_K=M_K(\tf_i^*\Lambda)\quad\mbox{ for }
K\in\cM_n^{\times}.$$
Moreover, by (7.3.2), we have
$$(\tf_i^*{\bf M}(\Lambda))_K=M_K(\Lambda)\quad\mbox{ for }K\in 
\cM_n^{\times}\setminus\cM_n^{\times}(i)^*.$$
Therefore it is enough to show
$$M_K(\tf_i^*\Lambda)=
\mbox{min}\left\{M_K(\Lambda),~M_{s_iK}(\Lambda)+c_i^*({\bf M}(\Lambda))\right\}
\mbox{ for }K\in \cM_n^{\times}(i)^*.\eqno{(8.1.1)}$$
\begin{lemma}\label{lemma:AM-e}
The formula $(8.1.1)$ is equivalent to the following: 
$$M_K(\Lambda)=\mbox{\em min}\{M_K(\overline{\Lambda}),~
M_{s_iK}(\overline{\Lambda})
+\langle h_i,\mbox{\em wt}(\overline{\Lambda})\rangle-\eps_i^*(\Lambda)\}
\mbox{ for }
K\in \cM_n^{\times}(i)^*.
\eqno{(8.1.2)}$$
Here $\overline{\Lambda}=\te_i^{\ast max}\Lambda$.
\end{lemma}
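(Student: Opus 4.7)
My approach is to establish the equivalence by iterating (8.1.1) upward from $\overline{\Lambda}$ to $\Lambda$ for the forward direction, and by comparing (8.1.2) at $\Lambda$ and at $\tf_i^*\Lambda$ for the reverse. Throughout, set $c=\eps_i^*(\Lambda)$ and $\Lambda_j=(\te_i^*)^j\Lambda$ for $0\leq j\leq c$, so that $\Lambda_0=\Lambda$, $\Lambda_c=\overline{\Lambda}$, and $\Lambda_{j-1}=\tf_i^*\Lambda_j$.

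The key numerical input is the identity
$$c_i^*({\bf M}(\Lambda))=\vphi_i^*(\Lambda)-1,$$
which I would verify by expanding $\eps_i^*({\bf M}^e)=\eps_i({\bf M}^{e\ast})$ through the definitions of Subsection 5.4 and combining with $\mbox{wt}({\bf M}^e)=\sum_iM^e_{[1,i]^c}\alpha_i$; the expansion yields $\vphi_i^*({\bf M}(\Lambda))=M_{[i+1,n+1]}(\Lambda)-M_{\{i\}\cup[i+2,n+1]}(\Lambda)$, which differs from $c_i^*({\bf M}(\Lambda))$ by exactly $1$. Two stability facts are also needed for the iteration: since every $K\in\cM_n^{\times}(i)^{*}$ satisfies $i\not\in K$ and $i+1\in K$, both $s_iK=(K\setminus\{i+1\})\cup\{i\}$ and $\{i\}\cup[i+2,n+1]$ contain $i$ and therefore lie outside $\cM_n^{\times}(i)^{*}$; hence (7.3.2) shows that $M_{s_iK}$ and $M_{\{i\}\cup[i+2,n+1]}$ are constant along the string $\{\Lambda_j\}$. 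Combined with (7.3.1), this gives $c_i^*({\bf M}(\Lambda_j))=c_i^*({\bf M}(\Lambda))+j$.

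For (8.1.1)$\Rightarrow$(8.1.2), I would apply (8.1.1) with $\Lambda$ replaced by $\Lambda_j$ for each $1\leq j\leq c$, obtaining
$$M_K(\Lambda_{j-1})=\min\bigl\{M_K(\Lambda_j),\,M_{s_iK}(\Lambda)+c_i^*({\bf M}(\Lambda))+j\bigr\}.$$
Chaining these $c$ equalities and noting that the second argument is strictly increasing in $j$, hence minimized at $j=1$, the nested minimum collapses to
$$M_K(\Lambda)=\min\bigl\{M_K(\overline{\Lambda}),\,M_{s_iK}(\overline{\Lambda})+c_i^*({\bf M}(\Lambda))+1\bigr\}.$$
The rewrite $c_i^*({\bf M}(\Lambda))+1=\vphi_i^*(\Lambda)=c+\langle h_i,\mbox{wt}(\Lambda)\rangle=\langle h_i,\mbox{wt}(\overline{\Lambda})\rangle-c$, using $\mbox{wt}(\overline{\Lambda})=\mbox{wt}(\Lambda)+c\alpha_i$, then produces (8.1.2); the degenerate case $c=0$ reduces to the fact that $\Lambda=\overline{\Lambda}$ makes the min in (8.1.2) trivially equal to its first slot.

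For the converse, I would apply (8.1.2) to both $\Lambda$ and $\Lambda'=\tf_i^*\Lambda$. Since $\overline{\Lambda'}=\overline{\Lambda}$ and $\eps_i^*(\Lambda')=c+1$, the two min-expressions share the first slot while their second slots differ by exactly $1$; after rewriting $c_i^*({\bf M}(\Lambda))=\langle h_i,\mbox{wt}(\overline{\Lambda})\rangle-c-1$ via the paragraph~2 identity, one verifies directly that $\min\{M_K(\Lambda),\,M_{s_iK}(\Lambda)+c_i^*({\bf M}(\Lambda))\}$ equals the (8.1.2)-expression for $M_K(\Lambda')$, which is (8.1.1). The main obstacle is really pinning down the identity $c_i^*({\bf M}(\Lambda))=\vphi_i^*(\Lambda)-1$; once this and the stability facts are in place, both implications reduce to elementary manipulations of nested minima.
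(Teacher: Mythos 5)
Your argument is correct and follows essentially the same route as the paper: you iterate $(8.1.1)$ up the $\te_i^*$-string and collapse the nested minima (noting the second slots increase strictly in $j$) for the forward implication, and compare $(8.1.2)$ at $\Lambda$ and at $\tf_i^*\Lambda$ for the converse, exactly as in the paper's proof of Lemma \ref{lemma:AM-e}. The only real divergence is your derivation of the key identity $c_i^*({\bf M}(\Lambda))=\vphi_i^*(\Lambda)-1$ (equivalently the paper's $(8.1.3)$) combinatorially from the definitions in Subsection 5.4 together with Lemma \ref{lemma:isom1}, rather than by the paper's direct count of kernel and cokernel dimensions of the quiver maps; both verifications are valid, though note that your dismissal of the $\eps_i^*(\Lambda)=0$ case as ``trivial'' still requires the inequality $M_K(\overline{\Lambda})\leq M_{s_iK}(\overline{\Lambda})+\langle h_i,\mbox{wt}(\overline{\Lambda})\rangle$, a point the paper's proof also leaves implicit.
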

\begin{rem}{\rm
The formula (8.1.2) is a generalization of the formula which appears in our
previous paper \cite{KS}. 

Consider the case of $K=[1,i+1]\setminus\{i\}~(1\leq i\leq n)$. Then we have
$$\out(K)=\left\{\begin{array}{ll}
\{2\} & (i=1),\\
\{i-1,i+1\} & (2\leq i\leq n-1),\\
\{n-1\} & (i=n)
\end{array}\right.\quad\mbox{and}\quad
\inn(K)=\{i\}.$$
Therefore we have
\begin{align*}
M_{[1,i+1]\setminus\{i\}}(\Lambda)&
=-\dimc\mbox{\rm Coker}\left(\mathop{\oplus}_{\tau;\sinn(\tau)=i}
V(\nu)_{\sout(\tau)}
\overset{\oplus B_{\tau}}{\longrightarrow}V(\nu)_i\right)\\
&=-\eps_i(\Lambda).
\end{align*}
Here $B=(B_{\tau})$ is a general point of $\Lambda$. 
Since $s_iK=[1,i]$, the formula (8.1.2) is equivalent to
$$\eps_i(\Lambda)=\mbox{max}\bigl\{\eps_i(\overline{\Lambda}), 
-\langle h_i,\mbox{wt}(\overline{\Lambda})\rangle+\eps_i^*(\Lambda)\bigr\}.$$
This is nothing but the formula which appears in \cite{KS}, Proposition 5.3.1,
(1).
}\end{rem}

\noindent
{\it Proof of Lemma \ref{lemma:AM-e}}.
Let $\nu$, $\onu$, $B$, $\overline{B}$ be as same as in the proof of Lemma 
\ref{lemma:isom2}. For simplicity, we denote 
$\oplus_{i\in I}V_i=\oplus_{i\in I}V_i(\nu)$ and
$\oplus_{i\in I}\overline{V}_i=\oplus_{i\in I}V_i(\onu)$. \\

Before proving the equivalence, we shall show
$$c_i^*({\bf M}(\Lambda))=\langle h_i,\mbox{wt}(\overline{\Lambda})\rangle
-\varepsilon_i^*(\Lambda)-1.\eqno{(8.1.3)}$$
Since
$$M_{[1,i]^c}(\Lambda)=-\dimc V_i,$$ 
$$M_{([1,i+1]\setminus\{i\})^c}(\Lambda)=
-\dimc\mbox{Coker}\left(V_i\to \mathop{\oplus}_{\tau~;~\mbox{out}(\tau)=i}
V_{\mbox{in}(\tau)}\right),$$
we have
\begin{align*}
&M_{[1,i]^c}(\Lambda)-M_{([1,i+1]\setminus\{i\})^c}(\Lambda)\\
&\qquad=-\dimc V_i
+\dimc\mbox{Coker}\left(V_i\to \mathop{\oplus}_{\tau~;~\mbox{out}(\tau)=i}
V_{\mbox{in}(\tau)}\right)\\
&\qquad
=-2\dimc V_i+\dimc\left(\mathop{\oplus}_{\tau~;~\mbox{out}(\tau)=i}
V_{\mbox{in}(\tau)}
\right)+\dimc\mbox{Ker}\left(V_i\to \mathop{\oplus}_{\tau~;~\mbox{out}(\tau)=i}
V_{\mbox{in}(\tau)}\right)\\
&\qquad=
\langle h_i,\mbox{wt}(\Lambda)\rangle
+\varepsilon_i^*(\Lambda).
\end{align*}
Moreover, since
$$\dimc\overline{V}_k=\left\{\begin{array}{ll}
\dimc V_k & (k\ne i),\\
\dimc V_i-\eps_i^*(\Lambda)&(k=i),
\end{array}\right.$$
we have
\begin{align*}
c_i^*({\bf M}(\Lambda))&=
\langle h_i,\mbox{wt}(\Lambda)\rangle
+\varepsilon_i^*(\Lambda)-1\\
&=\langle h_i,\mbox{wt}(\overline{\Lambda})-
\varepsilon_i^*(\Lambda)\alpha_i\rangle
+\varepsilon_i^*(\Lambda)-1\\
&=\langle h_i,\mbox{wt}(\overline{\Lambda})\rangle
-\varepsilon_i^*(\Lambda)-1.
\end{align*}
Thus, (8.1.3) is proved.\\

Let us prove the equivalence. Firstly, we will show (8.1.1) $\Rightarrow$ 
(8.1.2). 
Applying (8.1.1) for $\Lambda_1=\widetilde{e}_i^{*}\Lambda$, we have
\begin{align*}
M_K(\Lambda)&=M_K(\widetilde{f}_i^*\Lambda_1)\\
&=\mbox{min}\left\{M_K(\Lambda_1),~
M_{s_iK}(\Lambda_1)+c_i^*({\bf M}(\Lambda_1))\right\}.
\end{align*}
Since the vertex $i$ is a source in $\Omega(s_iK)$ and 
$\overline{\Lambda_1}=\overline{\Lambda}$, we have
$$M_{s_iK}(\Lambda_1)=M_{s_iK}(\overline{\Lambda_1})
=M_{s_iK}(\overline{\Lambda}).$$
On the other hand, 
\begin{align*}
c_i^*({\bf M}(\Lambda_1))
&=\langle h_i,\mbox{wt}(\overline{\Lambda_1})\rangle
-\varepsilon_i^*(\Lambda_1)-1\\
&=\langle h_i,\mbox{wt}(\overline{\Lambda})\rangle
-\varepsilon_i^*(\Lambda).
\end{align*}
Therefore we have
$$M_K(\Lambda)=\mbox{min}\left\{M_K(\Lambda_1),~
M_{s_iK}(\overline{\Lambda})+
\langle h_i,\mbox{wt}(\overline{\Lambda})\rangle
-\varepsilon_i^*(\Lambda)\right\}.\eqno{(8.1.4)}$$
Similarly, applying (8.1.1) for $\Lambda_2=\widetilde{e}_i^{*}\Lambda_1=
(\widetilde{e}_i^{*})^2\Lambda$, we have
\begin{align*}
M_K(\Lambda_1)&=\mbox{min}\left\{M_K(\Lambda_2),~
M_{s_iK}(\overline{\Lambda})+
\langle h_i,\mbox{wt}(\overline{\Lambda})\rangle
-\varepsilon_i^*(\Lambda_1)\right\}\\
&=\mbox{min}\left\{M_K(\Lambda_2),~
M_{s_iK}(\overline{\Lambda})+
\langle h_i,\mbox{wt}(\overline{\Lambda})\rangle
-\varepsilon_i^*(\Lambda)+1\right\}.
\end{align*}
By substituting this formula for (8.1.4), we have
\begin{align*}
M_K(\Lambda)&=\mbox{min}\left\{\mbox{min}\left\{M_K(\Lambda_2),~
M_{s_iK}(\overline{\Lambda})+
\langle h_i,\mbox{wt}(\overline{\Lambda})\rangle
-\varepsilon_i^*(\Lambda)+1\right\}\right.,\\
&\hspace*{50mm}~\left.
M_{s_iK}(\overline{\Lambda})+
\langle h_i,\mbox{wt}(\overline{\Lambda})\rangle
-\varepsilon_i^*(\Lambda)\right\}\\
&=\mbox{min}\left\{M_K(\Lambda_2),~
M_{s_iK}(\overline{\Lambda})+
\langle h_i,\mbox{wt}(\overline{\Lambda})\rangle
-\varepsilon_i^*(\Lambda)\right\}.
\end{align*}
After repeating the similar method, we have
$$M_K(\Lambda)=\mbox{min}\left\{M_K(\overline{\Lambda}),~
M_{s_iK}(\overline{\Lambda})+
\langle \alpha_i,\mbox{wt}(\overline{\Lambda})\rangle
-\varepsilon_i^*(\Lambda)\right\}.$$
This is nothing but the formula (8.1.2).\\

Secondly let us prove (8.1.2) $\Rightarrow$ (8.1.1). By (8.1.2) for 
$\widetilde{f}_i^*\Lambda$ and $\overline{\widetilde{f}_i^*\Lambda}=
\overline{\Lambda}$, we have
\begin{align*}
M_K(\widetilde{f}_i^*\Lambda)
&=\mbox{min}\left\{M_K(\overline{\widetilde{f}_i^*\Lambda}),
M_{s_iK}(\overline{\widetilde{f}_i^*\Lambda})
+\langle \alpha_i,\mbox{wt}(\overline{\widetilde{f}_i^*\Lambda})\rangle
-\varepsilon_i^*(\widetilde{f}_i^*\Lambda)\right\}\\
&=\mbox{min}\left\{
M_K(\overline{\Lambda}),
M_{s_iK}(\overline{\Lambda})
+\langle h_i,\mbox{wt}(\overline{\Lambda})\rangle
-\varepsilon_i^*(\Lambda)-1\right\}.
\end{align*}
Since $M_{s_iK}(\overline{\Lambda})
+\langle h_i,\mbox{wt}(\overline{\Lambda})\rangle
-\varepsilon_i^*(\Lambda)>M_{s_iK}(\overline{\Lambda})
+\langle h_i,\mbox{wt}(\overline{\Lambda})\rangle
-\varepsilon_i^*(\Lambda)-1$, 
\begin{align*}
\mbox{the right hand side}&=
\mbox{min}\left\{\mbox{min}\left\{M_K(\overline{\Lambda}),
M_{s_iK}(\overline{\Lambda})
+\langle h_i,\mbox{wt}(\overline{\Lambda})\rangle
-\varepsilon_i^*(\Lambda)\right\},\right.\\
&\qquad\qquad\qquad\qquad\qquad \left. M_{s_iK}(\overline{\Lambda})
+\langle h_i,\mbox{wt}(\overline{\Lambda})\rangle
-\varepsilon_i^*(\Lambda)-1\right\}\\
&=\mbox{min}\left\{M_K(\Lambda), M_{s_iK}(\overline{\Lambda})
+\langle h_i,\mbox{wt}(\overline{\Lambda})\rangle
-\varepsilon_i^*(\Lambda)-1\right\}\\
&=\mbox{min}\left\{M_K(\Lambda), M_{s_iK}(\overline{\Lambda})
+c_i^*({\bf M}(\Lambda))\right\}.
\end{align*}
Because $s_iK\in \cM_n^{\times}\setminus \cM_n^{\times}(i)^*$, we have
$M_{s_iK}(\overline{\Lambda})=M_{s_iK}(\Lambda)$ by (7.3.3). Therefore we have
$$M_K(\widetilde{f}_i^*\Lambda)= \mbox{min}\left\{M_K(\Lambda), 
M_{s_iK}({\Lambda})
+c_i^*({\bf M}(\Lambda))\right\}.$$
This is nothing but (8.1.1).
\hfill$\square$
\subsection{A proof of the formula (8.1.2)}
The aim of this subsection is to prove the next proposition: 
\begin{prop}\label{prop:new-AM}
The formula $(8.1.2)$ holds for any $K\in \cM_n^{\times}(i)^*$.
\end{prop}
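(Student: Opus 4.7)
The plan is to verify (8.1.2) pointwise at a general point $B$ of $\Lambda$, with induced general point $\overline{B}$ of $\overline{\Lambda}=\te_i^{\ast max}\Lambda$ coming from the diagram (7.2.3); here $\nu-\onu=c\alpha_i$ with $c=\eps_i^*(\Lambda)$, and $K_i^0:=\ker(V(\nu)_i\twoheadrightarrow V(\onu)_i)$ is a $c$-dimensional subspace of $V(\nu)_i$ that (by the stability condition of (7.2.3)) lies in $\ker(B_\tau)$ for every $\tau$ with $\out(\tau)=i$. By Corollary \ref{cor:Lag} and (7.1.2), each $M_K(\Lambda)$ equals the negated dimension of the cokernel of an explicit map $f_K(B):\mathop{\oplus}_{k\in\out(K)}V(\nu)_k\to\mathop{\oplus}_{l\in\inn(K)}V(\nu)_l$, and similarly for $M_K(\overline{\Lambda})$ and $M_{s_iK}(\overline{\Lambda})$.

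The hypothesis $K\in\cM_n^{\times}(i)^*$ means $i\in\inn(K)\setminus\out(K)$, so $V(\nu)_i$ appears only on the target side of $f_K(B)$. The snake lemma applied to the commutative square
$$\begin{CD}
\mathop{\oplus}_{k\in\out(K)}V(\nu)_k @= \mathop{\oplus}_{k\in\out(K)}V(\nu)_k \\
@VV f_K(B) V @VV f_K(\overline{B}) V \\
\mathop{\oplus}_{l\in\inn(K)}V(\nu)_l @>>> \mathop{\oplus}_{l\in\inn(K)}V(\onu)_l
\end{CD}$$
(whose bottom row is surjective with kernel $K_i^0$) yields
$$\dim\mbox{Coker}(f_K(B))-\dim\mbox{Coker}(f_K(\overline{B}))=c-d_K,$$
where $d_K:=\dim(P_i\cap K_i^0)$ and $P_i\subset V(\nu)_i$ is the sum of the images of the (at most two) paths of $\Omega(K)$ terminating at the sink $i$. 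Equivalently,
$$M_K(\Lambda)=M_K(\overline{\Lambda})-(c-d_K). \qquad(\ast)$$

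The more delicate step is to establish
$$M_{s_iK}(\overline{\Lambda})+\langle h_i,\mbox{wt}(\overline{\Lambda})\rangle-\eps_i^*(\Lambda)=M_K(\overline{\Lambda})-(c-d_K). \qquad(\ast\ast)$$
This requires analysing $M_{s_iK}(\overline{\Lambda})$ via the orientation $\Omega(s_iK)$, in which $i$ has become a source (since $i\in\out(s_iK)$) and the sinks near $\{i-1,i,i+1\}$ redistribute among $\{i-1,i+1\}$ in one of four configurations determined by whether $i-1\in K$ and whether $i+2\in K$ (with the obvious modifications when $i=1$ or $i=n$). The plan is a case-by-case dimension count, in each case using the moment-map relation $\mu(\overline{B})=0$ at the vertex $i$ to identify the image of the newly created paths of $\Omega(s_iK)$ starting at the source $i$ with a subspace of the new targets whose dimension is controlled by $c$ and by the quantity $d_K$ from Step 1.

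Combining $(\ast)$ and $(\ast\ast)$ with $0\le d_K\le c$, both terms inside the minimum in (8.1.2) are $\le M_K(\overline{\Lambda})$: the first is exactly $M_K(\overline{\Lambda})$ and the second is exactly $M_K(\overline{\Lambda})-(c-d_K)=M_K(\Lambda)$. Hence the minimum equals $M_K(\Lambda)$, which is (8.1.2). The main obstacle is the case analysis for $(\ast\ast)$: one must track precisely how the sink/source structure of $\Omega(K)$ deforms into that of $\Omega(s_iK)$ near $\{i,i+1\}$, and verify in each of the four configurations that the cokernel dimension picks up the predicted correction. The moment-map relation, which is the extra structure carried by $\Lambda(\nu)\subset X(\nu)$ beyond the choice of orientation, is exactly what forces the new paths starting at $i$ in $\Omega(s_iK)$ to match up with $P_i\cap K_i^0$ so that the constants balance correctly.
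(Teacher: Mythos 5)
Your step $(\ast)$ is essentially workable, although the correction term produced by the snake lemma is $\dimc\bigl(K_i^0\cap \mbox{Im}\,f_K(B)\bigr)$, the intersection being taken inside $\oplus_{l\in \inn(K)}V(\nu)_l$, and not $\dimc(P_i\cap K_i^0)$: an element of $P_i\cap K_i^0$ need not lie in $\mbox{Im}\,f_K(B)$, since its preimage must also kill the components at the other sinks of $\Omega(K)$. The fatal problem is $(\ast\ast)$. Because $0\le d_K\le c$, the identity $(\ast\ast)$ would force the second entry of the minimum in $(8.1.2)$ to be $\le M_K(\overline{\Lambda})$ and to equal $M_K(\Lambda)$ in every case, i.e.\ the minimum would always be attained by the second entry. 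This is false. Take $n=2$, $i=1$, $\nu=\alpha_2$ and $K=\{2\}\in\cM_2^{\times}(1)^*$, so that $\Lambda=\Lambda(\alpha_2)=\{0\}$, $\eps_1^*(\Lambda)=0$, $\overline{\Lambda}=\Lambda$ and $c=d_K=0$. Then $\out(K)=\{2\}$, $\inn(K)=\{1\}$ give $M_K(\Lambda)=M_K(\overline{\Lambda})=0$, and $s_1K=\{1\}$ has $\inn(s_1K)=\phi$, so $M_{s_1K}(\overline{\Lambda})=0$; but $\langle h_1,\mbox{wt}(\overline{\Lambda})\rangle-\eps_1^*(\Lambda)=\langle h_1,-\alpha_2\rangle=+1$, so the second entry equals $+1$, not $M_K(\overline{\Lambda})-(c-d_K)=0$. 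Formula $(8.1.2)$ holds here only because the minimum is attained by the \emph{first} entry.

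The point you are missing is that $M_{s_iK}(\overline{\Lambda})+\langle h_i,\mbox{wt}(\overline{\Lambda})\rangle-\eps_i^*(\Lambda)$ is an Euler--characteristic (``expected'') value that can strictly exceed $M_K(\Lambda)$, and the whole content of $(8.1.2)$ is that either branch of the minimum can win. The paper produces the dichotomy as follows: it embeds $\overline{W}_{s_iK}=\mbox{Ker}\bigl(\oplus_{p\in\out(s_iK)}\overline{V}_p\to\oplus_{q\in\inn(s_iK)}\overline{V}_q\bigr)$ into $\oplus_{k\in\out(K)}V_k$ by a map $\Phi$, sets $N=\mbox{Coker}(\Phi)$, and factors the $\Omega(K)$-structure maps through $N$ via $\psi$ (with values in $\overline{V}_i$) and a \emph{generic} lift $\varphi$ (with values in $V_i$); the identity $\dimc\mbox{Ker}\,\varphi=\mbox{max}\{\dimc\mbox{Ker}\,\psi-\eps_i^*(\Lambda),0\}$ for a generic lift is exactly what generates the max, and a dimension count identifies its two branches with the two entries of $(8.1.2)$. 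Your plan contains no mechanism that produces such a max, so the case analysis in step 2 cannot close as stated; at best you could hope to prove the inequality that the second entry is $\ge M_K(\Lambda)$ with equality unless $M_K(\Lambda)=M_K(\overline{\Lambda})$, which is a genuinely different (and harder) statement than $(\ast\ast)$.
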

Set
$$\overline{W}_{s_iK}=\mbox{Ker}\left(
\mathop{\oplus}_{p\in \mbox{out}(s_iK)}\overline{V}_p
\quad\overset{\oplus\overline{B}_{p\to q}}{\longrightarrow}
\mathop{\oplus}_{q\in \mbox{in}(s_iK)}\overline{V}_q\right)\subset
\left(\mathop{\oplus}_{p\in \mbox{out}(s_iK)}\overline{V}_p\right).$$
By the assumption, we have
$$\mbox{out}(s_iK)\setminus\{i\}\subset\out(K)\subset
\bigl(\mbox{out}(s_iK)\setminus\{i\}\bigr)\cup\{i-1,i+1\}.$$
Therefore we can define a map $\Phi:\overline{W}_{s_iK}\to
\left(\mathop{\oplus}_{k\in \mbox{out}(K)}V_k\right)
$ by
\begin{align*}
\Phi\left(\sum_{p\in \mbox{out}(s_iK)}\overline{w}_p\right)&=
\delta_K(i-1)\overline{B}_{i\to i-1}(\overline{w}_i)+
\delta_K(i+1)\overline{B}_{i\to i+1}(\overline{w}_i)
+\sum_{p\in \mbox{out}(s_iK)\setminus\{i\}}\overline{w}_p,
\end{align*}
where $\overline{w}_p\in\overline{V}_p$ and $\delta_K$ is a map form $[1,n]$
to $\{0,1\}$ defined by $$\delta_K(k)=\left\{\begin{array}{ll}
1 & (k\in \mbox{out}(K)),\\
0 & (\mbox{otherwise}).
\end{array}\right.$$
Here we remark that, if $p\in \mbox{out}(s_iK)\setminus\{i\}$, we have
$p\in\mbox{out}(K)$ and $\overline{V}_p=V_p$.\\

Set
$$N=\mbox{Coker}(\Phi)$$
and consider a map
$$\widetilde{\mbox{Id}}:\left(\mathop{\oplus}_{k\in \mbox{out}(K)}V_k\right)\to
N$$
which is naturally induced form the identity map
$\displaystyle{\mbox{Id}:\left(\mathop{\oplus}_{k\in \mbox{out}(K)}V_k\right)
\overset{\sim}{\to}\left(\mathop{\oplus}_{k\in \mbox{out}(K)}V_k\right)}.$
By the construction, it is clear that $\widetilde{\mbox{Id}}$ is surjective.\\

In the above setting, the following two Lemmas hold:
\begin{lemma}
$\Phi$ is injective.
\end{lemma}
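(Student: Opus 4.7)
The strategy is to examine the kernel of $\Phi$ directly. Suppose $\Phi(\overline{w})=0$ for some $\overline{w}=(\overline{w}_p)_{p\in\mbox{out}(s_iK)}\in\overline{W}_{s_iK}$, and deduce $\overline{w}=0$. I first record the combinatorial observation that, since $K\in\cM_n^{\times}(i)^*$ gives $i\notin K$ and $i+1\in K$, the element $i$ is always a right endpoint of an interval of $s_iK$ (so $i\in\mbox{out}(s_iK)$), while $i-1, i+1\notin\mbox{out}(s_iK)$. Combined with $\overline{V}_p=V_p$ for $p\ne i$ and the fact that $\Phi$ sends $\overline{w}_i$ only into $V_{i-1}\oplus V_{i+1}$, this means the $V_p$-component of $\Phi(\overline{w})$ for $p\in\mbox{out}(s_iK)\setminus\{i\}$ is simply $\overline{w}_p$. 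Hence $\Phi(\overline{w})=0$ immediately forces $\overline{w}_p=0$ for all $p\ne i$, and the problem reduces to showing $\overline{w}_i=0$.

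After setting $\overline{w}_p=0$ for $p\ne i$, the defining condition $\overline{w}\in\overline{W}_{s_iK}$ reduces to $\overline{B}_{\sigma(i\to q)}(\overline{w}_i)=0$ for every $q\in\mbox{in}(s_iK)$, where $\sigma(i\to q)$ denotes the path from $i$ to $q$ in $\Omega(s_iK)$. I plan to show that both $\overline{B}_{i\to i-1}(\overline{w}_i)=0$ and $\overline{B}_{i\to i+1}(\overline{w}_i)=0$ via a case split. For the left side: if $\delta_K(i-1)=1$ the $V_{i-1}$-component of $\Phi(\overline{w})=0$ yields this directly; if instead $\delta_K(i-1)=0$, i.e.\ $i-1\notin K$, a direct check from the interval decomposition shows that $i-1$ becomes the left boundary of the interval of $s_iK$ containing $i$, so $i-1\in\mbox{in}(s_iK)$ and the path $\sigma(i\to i-1)$ consists of the single arrow $i\to i-1$; the kernel condition then gives the vanishing. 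The argument for $\overline{B}_{i\to i+1}(\overline{w}_i)=0$ is symmetric, exploiting $\delta_K(i+1)=0\Leftrightarrow i+2\in K\Leftrightarrow i+1\in\mbox{in}(s_iK)$.

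Finally, since $\overline{\Lambda}=\te_i^{\ast max}\Lambda$ lands in $(\mbox{Irr}\Lambda(\onu))_i^0$, a general point $\overline{B}$ of $\overline{\Lambda}$ satisfies $\eps_i^*(\overline{B})=0$; this says exactly that the map $(\overline{B}_{i\to i-1},\overline{B}_{i\to i+1}):\overline{V}_i\to \overline{V}_{i-1}\oplus \overline{V}_{i+1}$ is injective (with the obvious modification at the boundary $i=1$ or $i=n$ where only one arrow is present, which is then itself injective). Together with the two vanishings just established, this forces $\overline{w}_i=0$, completing the proof. The main obstacle is the case analysis on the values of $\delta_K(i\pm 1)$, which must be organized so that the kernel condition defining $\overline{W}_{s_iK}$ supplies exactly the information missing from $\Phi(\overline{w})=0$; once the two sources of vanishing are unified, injectivity of $\Phi$ reduces cleanly to the generic injectivity coming from $\eps_i^*(\overline{\Lambda})=0$.
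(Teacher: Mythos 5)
The paper gives no proof of this lemma (it is set aside, together with the next one, as an ``easy exercise on linear algebra''), so there is no argument of the author's to compare yours against; judged on its own, your proof is correct and complete, and is surely the intended one. The two observations that make it work are exactly the ones you isolate. First, since $i\in s_iK$ and $i+1\notin s_iK$, one has $i\in\out(s_iK)$ while $i-1,\,i+1\notin\out(s_iK)$; hence the components of $\Phi(\overline{w})$ indexed by $p\in\out(s_iK)\setminus\{i\}$ are just the $\overline{w}_p$ themselves, and $\Phi(\overline{w})=0$ forces $\overline{w}_p=0$ for all $p\ne i$ together with $\delta_K(i\pm1)\overline{B}_{i\to i\pm1}(\overline{w}_i)=0$. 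Second, in the cases $\delta_K(i\pm1)=0$ (away from the boundary $i=1,n$) the missing vanishing is supplied by the kernel condition defining $\overline{W}_{s_iK}$: there $i-1\notin K$ (resp.\ $i+2\in K$) puts $i\mp... $ more precisely puts $i-1$ (resp.\ $i+1$) into $\inn(s_iK)$, adjacent to the source $i$ of $\Omega(s_iK)$, so the relevant path is the single arrow $i\to i\mp1$ and the corresponding component of the kernel map is $\overline{B}_{i\to i\mp1}(\overline{w}_i)$. Having both $\overline{B}_{i\to i-1}(\overline{w}_i)=0$ and $\overline{B}_{i\to i+1}(\overline{w}_i)=0$, the identity $\eps_i^*(\overline{B})=0$ for a general point $\overline{B}$ of $\overline{\Lambda}=\te_i^{\ast max}\Lambda$ says precisely that $\overline{V}_i\to\overline{V}_{i-1}\oplus\overline{V}_{i+1}$ is injective, whence $\overline{w}_i=0$. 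Your treatment of the boundary cases $i=1$ and $i=n$, where only one outgoing arrow exists, is also correct.
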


\begin{lemma}
Let $k\in\mbox{\rm out}(K),~l\in\mbox{\rm in}(K)$ and assume that there is a 
path $\sigma(k\to l)$ in the orientation $\Omega(K)$.
\vskip 1mm
\noindent
{\rm (1)} If $l\ne i$, then the map $B_{\sigma(k\to l)}:V_k\to V_l~
(=\overline{V}_l)$ induces a map $\psi_l:N\to V_l$ such that
$$B_{\sigma(k\to l)}~(=\overline{B}_{\sigma(k\to l)})=
\psi_l\circ\widetilde{\mbox{Id}}.$$
{\rm (2)} If $l=i$, then the map $B_{\sigma(k\to i)}:V_k\to \overline{V_i}
~(\ne V_i)$ induces a map $\psi_i:N\to \overline{V}_i$ such that
$$\overline{B}_{\sigma(k\to i)}=\psi_i\circ\widetilde{\mbox{Id}}.$$
Moreover, let $\pi_i:V_i\to \overline{V}_i$ be the natural projection and 
$\varphi_i:N\to V_i$ a generic map such that $\psi_i=\pi_i\circ\varphi_i$. Then
we have
$$B_{\sigma(k\to i)}=\varphi_i\circ\widetilde{\mbox{Id}}.$$
\end{lemma}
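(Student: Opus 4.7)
The plan is to reduce all three parts of the lemma to three vanishing identities on $\mbox{Im}(\Phi)$ and then verify them by unpacking the combinatorics of how $\Omega(K)$ and $\Omega(s_iK)$ are related near $i$. By the universal property of $N=\mbox{Coker}(\Phi)$, the existence of $\psi_l$ (resp.\ $\psi_i$, $\varphi_i$) with the claimed factorization is equivalent to $B_{\sigma(k\to l)}\circ\Phi=0$ (resp.\ $\overline{B}_{\sigma(k\to i)}\circ\Phi=0$, $B_{\sigma(k\to i)}\circ\Phi=0$), where each path map is assembled over the relevant summands $V_k$ with $k\in\out(K)$. Once the third vanishing is verified, the identity $\psi_i=\pi_i\circ\varphi_i$ is automatic: applying $\pi_i$ to $B_{\sigma(k\to i)}=\varphi_i\circ\widetilde{\mbox{Id}}$ and using the intertwining relation $\pi_i\circ B_\tau=\overline{B}_\tau\circ\pi_{\sout(\tau)}$ valid for every arrow $\tau$ common to $\Omega(K)$ and $\Omega(s_iK)$ yields $\psi_i\circ\widetilde{\mbox{Id}}=\pi_i\circ\varphi_i\circ\widetilde{\mbox{Id}}$, whence $\psi_i=\pi_i\circ\varphi_i$ by surjectivity of $\widetilde{\mbox{Id}}$.

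Next I would exploit the following key combinatorial observation: because $K\in\cM_n^{\times}(i)^*$ forces $i\notin K$ and $i+1\in K$, the vertex $i$ is a sink of $\Omega(K)$ and a source of $\Omega(s_iK)$, while the two orientations agree on every arrow outside the two edges $\{i-1,i\}$ and $\{i,i+1\}$. Hence any path $\sigma(k\to l)$ in $\Omega(K)$ either avoids $i$ entirely (since $i$ is a sink it cannot be an interior vertex) or terminates at $l=i$. In the first case the path in $\Omega(K)$ coincides with the corresponding path in $\Omega(s_iK)$, and tracing through the definition of $\Phi$ the vanishing $B_{\sigma(k\to l)}\circ\Phi=0$ reduces immediately to the defining kernel relation of $\overline{W}_{s_iK}$, using that $\pi_p$ is an isomorphism (identifiable with the identity) for $p\ne i$.

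For paths terminating at $l=i$, the image of $\Phi$ splits into contributions at indices $p\in\out(s_iK)\setminus\{i\}$ of the form $\overline{w}_p=w_p\in V_p=\overline{V}_p$, plus the two contributions at $i\pm 1$ (when they belong to $\out(K)$) of the form $\overline{B}_{i\to i\pm 1}(\overline{w}_i)$. The vanishing $\overline{B}_{\sigma(k\to i)}\circ\Phi=0$ then follows as in the previous case after applying $\pi_i$; the crucial third vanishing $B_{\sigma(k\to i)}\circ\Phi=0$ expresses the fact that the lifted contributions of $\overline{w}_i$ at $i\pm 1$, once pushed along $\Omega(K)$-paths to $i$, cancel the contributions coming from the other indices. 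This cancellation is the content of the preprojective relation $\mu(B)=0$ at vertex $i$, namely
$$\sum_{\tau\in H,\;\sinn(\tau)=i}\varepsilon(\tau)\,B_\tau B_{\overline{\tau}}=0,$$
which ties the maps $B_{i\pm 1\to i}$ to $B_{i\to i\pm 1}$ in exactly the way needed to match $\overline{B}_{i\to i\pm 1}$ against the $\overline{w}_i$-direction.

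The main obstacle is the bookkeeping for the local shape of $K$ near $i$: the four sub-cases determined by whether $i-1\in K$ and whether $i+2\in K$ dictate which of $i\pm 1$ lie in $\out(K)$ and how $\Omega(K)$ deviates from $\Omega(s_iK)$, and the boundary cases $i=1$ or $i=n$ need minor separate treatment. A uniform strategy is to isolate the disagreement region $\{i-1,i,i+1\}$ and use the preprojective relation at $i$ as the single nontrivial input; outside this region the identifications $\out(K)\leftrightarrow\out(s_iK)$ and $\inn(K)\leftrightarrow\inn(s_iK)$ are tautological, so the lemma collapses to a small linear-algebra identity at the vertex $i$ that the constraint $\mu(B)=0$ verifies directly.
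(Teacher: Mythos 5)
Your overall skeleton --- reduce each factorization through $N=\mbox{Coker}(\Phi)$ to the vanishing of the relevant path map on $\mbox{Im}(\Phi)$, note that $\Omega(K)$ and $\Omega(s_iK)$ differ exactly on the two edges at $i$, feed in the kernel relations defining $\overline{W}_{s_iK}$, and close with the preprojective relation --- is surely the intended ``easy exercise'' (the paper itself omits the proof), and the case $l\ne i$ as well as the derivation of $\psi_i=\pi_i\circ\varphi_i$ from the third vanishing are fine. The gap is the assertion that the relation $\mu(B)=0$ at the single vertex $i$ produces the cancellation for $l=i$ in all four local configurations. Track the signs: the left-hand path into the sink $i$ contributes $+B_{i-1\to i}B_{i\to i-1}(w_i)$ when $i-1\in K$ (the term comes directly from the summand $\delta_K(i-1)\overline{B}_{i\to i-1}(\overline{w}_i)$ of $\Phi$), but $-B_{i-1\to i}B_{i\to i-1}(w_i)$ when $i-1\notin K$ (then it is produced by the kernel relation of $\overline{W}_{s_iK}$ at $i-1\in\inn(s_iK)$, which carries a minus sign); symmetrically on the right according to whether $i+2\notin K$ or $i+2\in K$. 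The moment-map relation at $i$ reads $B_{i-1\to i}B_{i\to i-1}=B_{i+1\to i}B_{i\to i+1}=:\theta_i$ (a \emph{difference} vanishes, because of the signs $\eps(\tau)$), so it kills the two mixed configurations but leaves $\pm 2\theta_i(w_i)$ in the configurations $(i-1\notin K,\ i+2\in K)$ and $(i-1\in K,\ i+2\notin K)$.

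This residue is not zero for free: $\theta_i$ need not annihilate the relevant vectors even though $\eps_i^*(\overline{\Lambda})=0$. For instance, in type $A_6$ with $\onu=\alpha_2+2\alpha_3+2\alpha_4+\alpha_5$, let $\overline{\Lambda}$ be the component lying over the orbit of ${\bf e}(\alpha_2+\alpha_3)\oplus{\bf e}(\alpha_3+\alpha_4)\oplus{\bf e}(\alpha_4+\alpha_5)$; one checks that a general point $\overline{B}$ has $\eps_3^*(\overline{B})=0$ while $\overline{B}_{2\to3}\overline{B}_{3\to2}\ne 0$ on $\overline{V}_3$. Taking $\Lambda=\overline{\Lambda}$ (so $\pi_3=\mbox{id}$) and $K=\{2,4\}\in\cM_6^{\times}(3)^{*}$, both $2$ and $4$ lie in $\out(K)$, $\overline{W}_{s_3K}=\overline{V}_3$, and the composite of $\oplus_k B_{\sigma(k\to 3)}$ with $\Phi$ equals $2\,\overline{B}_{2\to3}\overline{B}_{3\to2}\ne 0$, so the map does not factor through $N$ with the formulas as literally written (the final identity $(8.1.2)$ still checks out numerically here, which indicates the problem is one of sign conventions in $\Phi$ and in the maps defining $\overline{W}_{s_iK}$ and $M_K$, not of the target statement). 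So your proof needs either an explicit choice of compensating signs making every pair of paths into $i$ contribute with opposite signs --- after which the relation at $i$ does finish the argument as you describe --- or a genuinely different argument in the two bad configurations; as it stands the claimed ``collapse to a small linear-algebra identity at the vertex $i$'' does not go through. Separately, you establish only that \emph{some} lift $\varphi_i$ of $\psi_i$ satisfies $B_{\sigma(k\to i)}=\varphi_i\circ\widetilde{\mbox{Id}}$, not that a \emph{generic} lift does; that genericity (coming from $B$ being general in $q_2\circ q_1^{-1}(\overline{B})$) is what is actually used later in the dimension count and deserves a sentence.
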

The above two lemmas are easy exercises on linear algebra. So, we omit to give
proofs.\\
\\
\noindent
{\it Proof of Proposition \ref{prop:new-AM}.} Since $\Phi$ is injective, 
we have
\begin{align*}
\dimc N &=
\mathop{\sum}_{k\in\mbox{out}(K)}\dimc V_k-\dimc \overline{W}_{s_iK}\\
&=\mathop{\sum}_{k\in\mbox{out}(K)}\dimc V_k-\dimc \mbox{Ker}\left(
\mathop{\oplus}_{p\in \mbox{out}(s_iK)}\overline{V}_p
\quad\overset{\oplus\overline{B}_{\sigma(p\to q)}}{\longrightarrow}
\mathop{\oplus}_{q\in \mbox{in}(s_iK)}\overline{V}_q\right)\\
&=\mathop{\sum}_{k\in\mbox{out}(K)}\dimc V_k-
\mathop{\sum}_{p\in\mbox{out}(s_iK)}\dimc \overline{V}_p
+\mathop{\sum}_{q\in\mbox{in}(s_iK)}\dimc \overline{V}_q\\
&\qquad\qquad 
-\dimc\mbox{Coker}\left(
\mathop{\oplus}_{p\in \mbox{out}(s_iK)}\overline{V}_p
\quad\overset{\oplus\overline{B}_{\sigma(p\to q)}}{\longrightarrow}
\mathop{\oplus}_{q\in \mbox{in}(s_iK)}\overline{V}_q\right)\\
&=\mathop{\sum}_{k\in\mbox{out}(K)}\dimc V_k-
\mathop{\sum}_{p\in\mbox{out}(s_iK)}\dimc \overline{V}_p
+\mathop{\sum}_{q\in\mbox{in}(s_iK)}\dimc \overline{V}_q
+M_{s_iK}(\overline{\Lambda}).
\end{align*}
Denote the direct sum of the maps $\psi_l$ ({\it resp.} $\varphi_l$) 
$(l\in\mbox{in(K)})$ by
$$\psi=\mathop{\oplus}_{l\in \mbox{in}(K)} \psi_l:
N\to \mathop{\oplus}_{l\in \mbox{in}(K)}\overline{V}_l
\qquad\left({\it resp.}~ \varphi=\mathop{\oplus}_{l\in \mbox{in}(K)}\varphi_l:
N\to \mathop{\oplus}_{l\in \mbox{in}(K)}
{V}_l\right).
$$
Here we set $\varphi_l=\psi_l$ for $l\ne i$.

By the definition, we have
$$\mbox{Im}\left(
\mathop{\oplus}_{k\in \mbox{out}(K)}{V}_k
\quad\overset{\oplus{B}_{\sigma(k\to l)}}{\longrightarrow}
\mathop{\oplus}_{l\in \mbox{in}(K)}{V}_l\right)
=\mbox{Im}\left(N\overset{\varphi}{\longrightarrow}
\mathop{\oplus}_{l\in \mbox{in}(K)}{V}_l\right).$$
Moreover, by the genericity of $\varphi$, we have
$$\dim\mbox{Ker}\varphi
=\mbox{max}\bigl\{\dimc\mbox{Ker}\psi-\varepsilon_i^*(\Lambda),0\bigr\}.$$
Combining the above results, we have
$$-M_K(\Lambda)=\mathop{\sum}_{l\in \mbox{in}(K)}\dimc{V}_l-\dimc N+
\mbox{max}\bigl\{\dimc\mbox{Ker}\psi-\varepsilon_i^*(\Lambda),0\bigr\}.
\eqno{(8.2.1)}$$
Indeed,
\begin{align*}
-M_K(\Lambda)&=\dimc\mbox{Coker}\left(
\mathop{\oplus}_{k\in \mbox{out}(K)}{V}_k
\quad\overset{\oplus{B}_{\sigma(k\to l)}}{\longrightarrow}
\mathop{\oplus}_{l\in \mbox{in}(K)}{V}_l\right)\\
&=\mathop{\sum}_{l\in \mbox{in}(K)}\dimc{V}_l-\dimc\mbox{Im}\left(
\mathop{\oplus}_{k\in \mbox{out}(K)}{V}_k
\quad\overset{\oplus{B}_{\sigma(k\to l)}}{\longrightarrow}
\mathop{\oplus}_{l\in \mbox{in}(K)}{V}_l\right)\\
&=\mathop{\sum}_{l\in \mbox{in}(K)}\dimc{V}_l-\dimc\mbox{Im}\varphi\\
&=\mathop{\sum}_{l\in \mbox{in}(K)}\dimc{V}_l-\dimc N+\dimc\mbox{Ker}\varphi\\
&=\mathop{\sum}_{l\in \mbox{in}(K)}\dimc{V}_l-\dimc N+
\mbox{max}\bigl\{\dimc\mbox{Ker}\psi-\varepsilon_i^*(\Lambda),0\bigr\}.
\end{align*}
\begin{lemma} The following formulas hold:
\vskip 1mm
\noindent
{\rm (1)} $\displaystyle{\mathop{\sum}_{l\in \mbox{\rm in}(K)}\dimc{V}_l-
\dimc N+\dimc\mbox{\rm Ker}\psi-\varepsilon_i^*(\Lambda)=
-M_{K}(\overline{\Lambda})},$
\vskip 1mm
\noindent
{\rm (2)} $\displaystyle{\mathop{\sum}_{l\in \mbox{\rm in}(K)}\dimc{V}_l-
\dimc N=
-M_{s_iK}(\overline{\Lambda})+\varepsilon_i^*(\Lambda)-\langle h_i,
\mbox{\rm wt}(\overline{\Lambda})\rangle}$.
\end{lemma}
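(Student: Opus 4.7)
The plan is to verify the two formulas separately. Formula (1) is a rank--nullity computation using the factorization supplied by the second of the two preceding lemmas, while formula (2) is a combinatorial identity obtained from the expression for $\dimc N$ already derived in the proof of Proposition \ref{prop:new-AM}.

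For (1), that lemma gives $\overline{B}_{\sigma(k\to l)}=\psi_l\circ\widetilde{\mbox{Id}}$ for every $k\in\out(K)$ and $l\in\inn(K)$, so by surjectivity of $\widetilde{\mbox{Id}}$ we have $\mbox{Im}\bigl(\oplus\overline{B}_{\sigma(k\to l)}\bigr)=\mbox{Im}\,\psi$. Hence
$$-M_K(\overline{\Lambda})=\sum_{l\in\inn(K)}\dimc\overline{V}_l-\dimc\mbox{Im}\,\psi=\sum_{l\in\inn(K)}\dimc\overline{V}_l-\dimc N+\dimc\mbox{Ker}\,\psi.$$
Since $K\in\cM_n^{\times}(i)^*$ forces $i\in\inn(K)$, and since $\dimc\overline{V}_l=\dimc V_l$ for $l\ne i$ while $\dimc\overline{V}_i=\dimc V_i-\eps_i^*(\Lambda)$, substituting yields exactly (1).

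For (2), using the expression for $\dimc N$ proved just above the statement, the identity to be shown reduces to
$$\sum_{l\in\inn(K)}\dimc V_l-\sum_{k\in\out(K)}\dimc V_k+\sum_{p\in\out(s_iK)}\dimc\overline{V}_p-\sum_{q\in\inn(s_iK)}\dimc\overline{V}_q=\eps_i^*(\Lambda)-\langle h_i,\mbox{wt}(\overline{\Lambda})\rangle.$$
Observe that $\out(K)\triangle\out(s_iK)$ and $\inn(K)\triangle\inn(s_iK)$ are contained in $\{i-1,i,i+1\}$ and $\overline{V}_l=V_l$ for $l\ne i$, so the left-hand side localizes at vertex $i$ and its neighbors. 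Setting $a=[i-1\in K]$ and $b=[i+2\in K]$, a direct check yields: $\out(K)\cap\{i-1,i,i+1\}$ contributes $a\dimc V_{i-1}+(1-b)\dimc V_{i+1}$; $\inn(K)\cap\{i-1,i,i+1\}=\{i\}$ contributes $\dimc V_i$; $\out(s_iK)\cap\{i-1,i,i+1\}=\{i\}$ contributes $\dimc V_i-\eps_i^*(\Lambda)$; and $\inn(s_iK)\cap\{i-1,i,i+1\}$ contributes $(1-a)\dimc V_{i-1}+b\dimc V_{i+1}$.

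The key observation is that all indicator variables cancel via $a+(1-a)=b+(1-b)=1$, so the left-hand side collapses to $2\dimc V_i-\dimc V_{i-1}-\dimc V_{i+1}-\eps_i^*(\Lambda)$. Using $\mbox{wt}(\overline{\Lambda})=\mbox{wt}(\Lambda)+\eps_i^*(\Lambda)\alpha_i$ together with the type-$A_n$ identity $-\langle h_i,\mbox{wt}(\Lambda)\rangle=2\dimc V_i-\dimc V_{i-1}-\dimc V_{i+1}$, this rewrites as $-\langle h_i,\mbox{wt}(\overline{\Lambda})\rangle+\eps_i^*(\Lambda)$, proving (2). The boundary cases $i=1$ or $i=n$ require only the convention that terms indexed outside $[1,n]$ are dropped, and are handled identically. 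The only delicate point is to organize the case analysis (over the four combinations of $a,b\in\{0,1\}$, together with the boundary cases) so that the indicator-variable cancellation goes through uniformly; this is the main obstacle, but it is resolved by the above uniform tabulation of the four contributions.
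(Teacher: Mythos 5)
Your proof is correct and follows essentially the same route as the paper: part (1) is the identical rank--nullity argument via the factorization $\overline{B}_{\sigma(k\to l)}=\psi_l\circ\widetilde{\mbox{Id}}$ and the relation $\dimc\overline{V}_i=\dimc V_i-\eps_i^*(\Lambda)$, and part (2) is the same reduction, via the earlier expression for $\dimc N$, to the combinatorial identity that the paper dismisses as "easily checked by a direct computation." Your explicit tabulation of $\out$ and $\inn$ for $K$ and $s_iK$ near the vertex $i$, with the cancellation of the indicators $a,b$, correctly supplies the verification the paper omits.
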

\begin{proof}
The formula (1) is proved by a direct computation. Indeed, we have
\begin{align*}
&\mathop{\sum}_{l\in \mbox{in}(K)}\dimc{V}_l-\dimc N+\dimc\mbox{Ker}\psi
-\varepsilon_i^*(\Lambda)\\
&\qquad\qquad 
=\mathop{\sum}_{l\in \mbox{in}(K)}\dimc{\overline{V}}_l-\dimc\mbox{Im}\psi\\
&\qquad\qquad =\mathop{\sum}_{l\in \mbox{in}(K)}\dimc{\overline{V}}_l-
\dimc\mbox{Im}\left(
\mathop{\oplus}_{k\in \mbox{out}(K)}\overline{V}_k
\quad\overset{\oplus\overline{B}_{\sigma(k\to l)}}{\longrightarrow}
\mathop{\oplus}_{l\in \mbox{in}(K)}\overline{V}_l\right)\\
&\qquad\qquad =\dimc\mbox{Coker}\left(
\mathop{\oplus}_{k\in \mbox{out}(K)}\overline{V}_k
\quad\overset{\oplus\overline{B}_{\sigma(k\to l)}}{\longrightarrow}
\mathop{\oplus}_{l\in \mbox{in}(K)}\overline{V}_l\right)\\
&\qquad\qquad =-M_{K}(\overline{\Lambda}).
\end{align*}
Let us show the formula (2). We have
\begin{align*}
\mathop{\sum}_{l\in \mbox{in}(K)}\dimc{V}_l-\dimc N
&=\mathop{\sum}_{l\in \mbox{in}(K)}\dimc{V}_l
-\mathop{\sum}_{k\in \mbox{out}(K)}\dimc{V}_k\\
&\qquad\qquad+\mathop{\sum}_{p\in \mbox{out}(s_iK)}\dimc\overline{V}_p
-\mathop{\sum}_{q\in \mbox{in}(s_iK)}\dimc\overline{V}_q
-M_{s_iK}(\overline{\Lambda}).
\end{align*}
Therefore it is enough to show that 
\begin{align*}
&\mathop{\sum}_{l\in \mbox{in}(K)}\dim{V}_l
-\mathop{\sum}_{k\in \mbox{out}(K)}\dim{V}_k
+\mathop{\sum}_{p\in \mbox{out}(s_iK)}\dim\overline{V}_p
-\mathop{\sum}_{q\in \mbox{in}(s_iK)}\dim\overline{V}_q\\
&\qquad\quad=\varepsilon_i^*(\Lambda)-\langle h_i,
\mbox{wt}(\overline{\Lambda})\rangle.
\end{align*}
But it is easily checked by a direct computation. Thus, we get the formula.
\end{proof}
Let us return to the proof of Proposition \ref{prop:new-AM}.
Substituting the result of the above lemma for (8.2.1), we get
$$-M_K(\Lambda)=\mbox{max}\left\{-M_K(\overline{\Lambda}),~
-M_{s_iK}(\overline{\Lambda})
-\langle \alpha_i,\mbox{wt}(\overline{\Lambda})\rangle
+\varepsilon_i^*(\Lambda)\right\}.$$
This is nothing but the formula (8.1.2). Thus we have the statement.
\hfill$\square$
\appendix
\section{A proof of the formula (7.1.1)}
In this appendix, we will give a proof of the formula (7.1.1).

\subsection{Results in  \cite{KS}}
Let us recall some results in \cite{KS}.
Consider two quivers of type $A_n$, $(I,\Omega)$ and $(I,\Omega')$. 
Assume ${\bf i}$ ({\it resp}. ${\bf i}'$) is 
a reduced word of the longest element which is adapted to the orientation
$\Omega$ ({\it resp}. $\Omega'$).  

Let $\nu\in Q_+$ and $\mathcal{O}_{{\bf a},\Omega}$ be the $G(\nu)$-orbit in 
$E_{V(\nu),\Omega}$ corresponding to an ${\bf i}$-Lusztig datum ${\bf a}$, and 
$\mathcal{O}_{{\bf a}',\Omega'}$ the orbit in 
$E_{V(\nu),\Omega'}$ corresponding to an ${\bf i}'$-Lusztig datum 
${\bf a'}=R_{\bf i}^{{\bf i}'}({\bf a})$ where $R_{\bf i}^{{\bf i}'}$ is the 
transition map from ${\bf i}$ to ${\bf i}'$. 

Set 
$$\Lambda_{\bf a}:=\overline{T^*_{\mathcal{O}_{\bf a},\Omega}E_{V(\nu),\Omega}}
\quad\mbox{and}\quad
\Lambda_{{\bf a}'}:=\overline{T^*_{\mathcal{O}_{{\bf a}'},\Omega'}E_{V(\nu),\Omega'}}.
$$
The goal is to prove
$$\Lambda_{\bf a}=\Lambda_{{\bf a}'}.\eqno{(A.1.1)}$$
\vskip 3mm
Since our quiver is of type $A_n$, there uniquely exists a $G(\nu)$-orbit 
$\mathcal{O}_{{\bf b},\Omega}$ in $E_{V(\nu),\Omega}$ such that 
$\Lambda_{{\bf a}'}=\overline{T^*_{\mathcal{O}_{{\bf b},\Omega}}E_{V(\nu),\Omega}}$. 
(Here we denote by ${\bf b}$
the corresponding ${\bf i}$-Lusztig datum.)
We consider a map 
$$s:\{G(\nu)\mbox{-orbits in }E_{V(\nu),\Omega}\}\to
\{G(\nu)\mbox{-orbits in }E_{V(\nu),\Omega}\}$$
defined by $\mathcal{O}_{{\bf a},\Omega}\mapsto \mathcal{O}_{{\bf b},\Omega}$.
We remark that, by the definition, $s$ is 
a bijection.
To prove the formula $(A.1.1)$ (or equivalently (7.1.1)), it is enough to 
show that $s$ is the identity map. \\

Let $\nc_{\mathcal{O}_{{\bf a},\Omega}}$ be the constant sheaf on the orbit 
$\mathcal{O}_{{\bf a},\Omega}$, ${}^{\pi}\nc_{\mathcal{O}_{{\bf a},\Omega}}$ its 
minimal extension and 
$SS({}^{\pi}\nc_{\mathcal{O}_{{\bf a},\Omega}})$ its singular support. Then we have
$$SS({}^{\pi}\nc_{\mathcal{O}_{{\bf a},\Omega}})\supset 
\Lambda_{\bf a}=\overline{T^*_{\mathcal{O}_{{\bf a},\Omega}}E_{V(\nu),\Omega}}.$$
In addition to the above, the next result is proved by Lusztig: 
\begin{thm}[\cite{L1},\cite{L3}]
$$SS({}^{\pi}\nc_{\mathcal{O}_{{\bf a},\Omega}})
=SS({}^{\pi}\nc_{\mathcal{O}_{{\bf a}',\Omega'}}).$$
\end{thm}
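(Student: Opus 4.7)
The plan is to identify each simple perverse sheaf ${}^{\pi}\nc_{\mathcal{O}_{{\bf a},\Omega}}$ with a specific element of Lusztig's semisimple category $\mathcal{Q}_{V(\nu)}$ attached to the quiver $(I,\Omega)$, and then use the Fourier--Deligne transform to transport the statement between different orientations. Specifically, for $\bf i$ adapted to $\Omega$, Lusztig shows that the assignment ${\bf a}\mapsto {}^{\pi}\nc_{\mathcal{O}_{{\bf a},\Omega}}$ gives a bijection between ${\bf i}$-Lusztig data of weight $\nu$ and the isomorphism classes of simple objects in $\mathcal{Q}_{V(\nu)}$, with the bijection characterized on the Grothendieck group by the PBW basis element $P_{\bf i}({\bf a})$.

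The key tool is the Fourier--Deligne transform at a sink/source. When $\Omega$ and $s_i\Omega$ differ by reversing the arrows at a single vertex $i$, there is an equivalence $\mathcal{F}_i:\mathcal{Q}_{V(\nu),\Omega}\overset{\sim}{\to}\mathcal{Q}_{V(\nu),s_i\Omega}$ induced by pulling back and pushing forward along the correspondence
\[
E_{V(\nu),\Omega}\longleftarrow T^*E_{V(\nu),\Omega}\cong X_{V(\nu)}\cong T^*E_{V(\nu),s_i\Omega}\longrightarrow E_{V(\nu),s_i\Omega},
\]
and Lusztig proves that under $\mathcal{F}_i$ the simple perverse sheaf indexed by an ${\bf i}$-Lusztig datum $\bf a$ is carried to the one indexed by $R_{\bf i}^{{\bf i}''}({\bf a})$ for any reduced word ${\bf i}''$ adapted to $s_i\Omega$. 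Since any two orientations of a Dynkin quiver of type $A_n$ are connected by a chain of sink/source reflections, iterating this gives a canonical identification sending ${}^{\pi}\nc_{\mathcal{O}_{{\bf a},\Omega}}$ to ${}^{\pi}\nc_{\mathcal{O}_{{\bf a}',\Omega'}}$, where ${\bf a}' = R_{\bf i}^{{\bf i}'}({\bf a})$.

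The remaining step is the compatibility of singular support with Fourier transform. This is the classical fact (due to Kashiwara--Schapira, implicit in Lusztig's usage) that if $\mathcal{G}$ is obtained from $\mathcal{F}$ by Fourier--Deligne transform via the identification $T^*E_{V,\Omega}\cong X_V\cong T^*E_{V,s_i\Omega}$, then $SS(\mathcal{G})=SS(\mathcal{F})$ as subvarieties of $X_V$. Applying this inductively along the chain of reflections relating $\Omega$ to $\Omega'$ yields
\[
SS({}^{\pi}\nc_{\mathcal{O}_{{\bf a},\Omega}})=SS({}^{\pi}\nc_{\mathcal{O}_{{\bf a}',\Omega'}})
\]
inside $X_{V(\nu)}$, which is the desired equality.

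The main obstacle I expect is the precise tracking of the matching of simple perverse sheaves under $\mathcal{F}_i$: one must check that the indexing by ${\bf i}$-Lusztig data is compatible with the Fourier transform in the sense that it implements the transition map $R_{\bf i}^{{\bf i}''}$. This is the combinatorial heart of Lusztig's braid group action on PBW bases transported to the geometric side, and it relies on recognizing that the Fourier transform sends $T^*_{\mathcal{O}_{{\bf a},\Omega}}E_{V(\nu),\Omega}$ generically to a conormal bundle of the orbit whose combinatorial label is obtained from $\bf a$ by the reflection functor at $i$. Once this bookkeeping is in place, the singular-support invariance is formal and the theorem follows.
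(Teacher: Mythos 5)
The paper does not prove this theorem: it is quoted as a result of Lusztig and used as a black box in Appendix A, so the only meaningful comparison is with Lusztig's own argument, of which your sketch is essentially a faithful outline. Your three ingredients are the right ones: in finite type the simple objects of Lusztig's category on $E_{V(\nu),\Omega}$ are exactly the intersection cohomology complexes of orbit closures, indexed by ${\bf i}$-Lusztig data for any adapted ${\bf i}$; the Fourier--Deligne transform at a sink/source is an equivalence of these categories which, because it intertwines the induction functors, fixes the canonical basis and hence relabels simple objects by the transition map (this is precisely how $R_{\bf i}^{{\bf i}'}$, defined through $B(\infty)$, enters); the classical twist of singular supports by $(x,\xi)\mapsto(\xi,-x)$ is converted into an honest equality inside $X_{V(\nu)}$ by the sign $\eps(\tau)$ in $\omega$; and any two orientations of a tree are joined by sink/source reflections. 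Two caveats. The transform is not the pull--push through the cotangent bundle in your displayed correspondence; it is defined on the space carrying both the kept and the reversed arrows, with a nontrivial rank-one kernel pairing the reversed directions, and only its effect on singular supports is read off through $X_{V(\nu)}$. And the ``bookkeeping'' you defer --- that $\mathcal{F}_i$ sends the simple object labelled ${\bf a}$ to the one labelled $R_{\bf i}^{{\bf i}''}({\bf a})$ --- is the actual content of the cited theorem rather than a routine check; as written, your proof, like the paper, ultimately delegates that substance to \cite{L1} and \cite{L3}.
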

Therefore we have
$$SS({}^{\pi}\nc_{\mathcal{O}_{{\bf a},\Omega}})
=SS({}^{\pi}\nc_{\mathcal{O}_{{\bf a}',\Omega'}})\supset 
\Lambda_{{\bf a}'}=
\overline{T^*_{\mathcal{O}_{{\bf b},\Omega}}E_{V(\nu),\Omega}}=
\overline{T^*_{s(\mathcal{O}_{{\bf a},\Omega})}E_{V(\nu),\Omega}}.\eqno{(A.1.2)}$$
\begin{rem}{\rm
In \cite{L3} and \cite{KS}, they consider a similar problem in more general 
setting. 
Let $\gtg$ be an arbitrary symmetric Kac-Moody Lie algebra and $B(\infty)$
the crystal basis of $U_q^-(\gtg)$. In \cite{KS}, they define a crystal
structure on the set of all irreducible components of Lusztig's quiver 
varieties, and show that it is isomorphic to $B(\infty)$. 
For $b\in B(\infty)$, we denote by $\Lambda_b$ the corresponding 
irreducible component.
On the other hand, let ${\bf B}$ be the Lusztig's canonical 
basis of $U_q^-(\gtg)$. It is constructed as the set of certain simple perverse
shaeves on the sepace of representations of a quiver attached to $\gtg$ 
(\cite{L3}).
Recall that there is a canonical bijection between $B(\infty)$ and 
${\bf B}$. For $b\in B(\infty)$, we denote by $L_{b,\Omega}$ the corresponding 
simple perverse sheaf. Then it is known that
$$SS(L_{b,\Omega})\supset \Lambda_{b}\quad \mbox{for any }b\in B(\infty).$$

Now, let us consider the following problem:\\
\\
{\bf Problem}. Assume $s:B(\infty)\to B(\infty)$ 
is a bijection such that $SS(L_{b,\Omega})\supset \Lambda_{s(b)}$ for any
$b\in B(\infty)$. Then, is $s$ the identity?\\

The above problem was firstly considered by Lusztig (\cite{L3}). 
In \cite{KS}, they stated that ``$s$ must be the identity under the above 
assumption''. But their ``proof'' of the statement is wrong. Therefore, the
problem is still open for an arbitrary case.

On the other hand, Kimura \cite{Kim} shows that the bijection $s$ must be the 
identity for finite $ADE$ type cases and cyclic quiver cases. 
In the next subsection, we will give a proof following \cite{Kim}.
}\end{rem}
\subsection{Kimura's proof of (A.1.1)}
By $(A.1.2)$ and the definition of singular supports, we have
$$\overline{\mathcal{O}_{{\bf a},\Omega}}\supset \mathcal{O}_{{\bf b},\Omega}=
s(\mathcal{O}_{{\bf a},\Omega}).$$
Namely, the bijection $s$ preserves closure relations on $G(\nu)$-orbits
in $E_{V(\nu),\Omega}$:
$$s({\mathcal{O}_{\Omega}})\subset
\overline{\mathcal{O}_{\Omega}}
\quad\mbox{for any $G(\nu)$-
orbit }\mathcal{O}_{\Omega}\mbox{ in }E_{V(\nu),\Omega}.\eqno{(A.2.1)}$$
Hence, it is enough to prove that the following statement:
\begin{prop}[\cite{Kim}]\label{prop:Kim}
Let $s$ be a bijection on the set of $G(\nu)$-orbits
in $E_{V(\nu),\Omega}$ which preserves closure relations. Then $s$ must be
the identity.
\end{prop}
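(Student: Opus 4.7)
\medskip

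The plan is to reduce Proposition \ref{prop:Kim} to a purely order-theoretic statement about bijections on a finite poset.

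First I would observe that the closure relation $\mathcal{O}_1 \leq \mathcal{O}_2 \defarrow \mathcal{O}_1 \subset \overline{\mathcal{O}_2}$ defines a partial order on the set $P$ of all $G(\nu)$-orbits in $E_{V(\nu),\Omega}$. Because our quiver is of type $A_n$, Gabriel's theorem implies that there are only finitely many isomorphism classes of representations with dimension vector $\nu$, and hence $P$ is finite. The hypothesis (A.2.1) reads exactly as $s(\mathcal{O}) \leq \mathcal{O}$ in this poset.

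Thus the proposition reduces to the following elementary lemma: \emph{if $P$ is a finite poset and $s : P \to P$ is a bijection with $s(x) \leq x$ for every $x \in P$, then $s = \mathrm{id}_P$.} I would prove this by iterating $s$: for a fixed $x \in P$, the sequence $x \geq s(x) \geq s^2(x) \geq \cdots$ lies in the finite set $P$, so some $s^N(x) = s^M(x)$ with $N > M$. Applying the bijectivity of $s$ (i.e.\ $s^{-M}$) yields $s^{N-M}(x) = x$, and the sandwich
\[
x = s^{N-M}(x) \leq s^{N-M-1}(x) \leq \cdots \leq s(x) \leq x
\]
forces all inequalities to be equalities. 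In particular $s(x) = x$, and since $x$ was arbitrary we conclude $s = \mathrm{id}_P$. Alternatively one may run an induction on $|P|$: any maximal $x \in P$ is fixed by $s^{-1}$ (since $s^{-1}(x) \geq x$ by the hypothesis applied at $s^{-1}(x)$ combined with maximality), hence by $s$, and one restricts to $P \setminus \{x\}$.

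There is no genuine obstacle; the content is simply recognizing (A.2.1) as the defining condition of a regressive bijection on a finite poset, and the finiteness of $P$ is guaranteed by the type $A_n$ hypothesis via Gabriel's theorem. Combining this with (A.2.1), we get $s = \mathrm{id}$, so $\mathcal{O}_{{\bf b},\Omega} = s(\mathcal{O}_{{\bf a},\Omega}) = \mathcal{O}_{{\bf a},\Omega}$, which yields $\Lambda_{\bf a} = \Lambda_{{\bf a}'}$ and hence the formula (7.1.1).
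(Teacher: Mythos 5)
Your proof is correct. You read hypothesis (A.2.1) as saying that $s$ is a regressive bijection for the closure order on the finite set of $G(\nu)$-orbits, and then prove the purely order-theoretic fact that a regressive bijection of a finite poset is the identity. Your main argument differs in mechanism from the paper's: the paper runs a decreasing induction on the poset (with the order convention reversed from yours, so that its maximal elements are the closed orbits), first fixing the maximal orbits and then using bijectivity to rule out $s(\mathcal{O})$ landing on any already-fixed orbit strictly above $\mathcal{O}$. You instead iterate $s$ on a fixed orbit, use finiteness to produce a cycle $s^{N-M}(x)=x$, and let antisymmetry collapse the chain $x\geq s(x)\geq\cdots\geq s^{N-M}(x)=x$; this avoids any explicit induction and isolates the combinatorial content cleanly. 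The ``alternative'' induction on $|P|$ that you sketch at the end is essentially the paper's proof up to reversing the order. Your appeal to Gabriel's theorem for finiteness of the orbit set is exactly the finiteness the paper records earlier (finitely many $G_{V(\nu)}$-orbits in $E_{V(\nu),\Omega}$ for type $A_n$), so nothing is missing.
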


Let us introduce an ordering $\leq$ on the set of $G(\nu)$-orbits
in $E_{V(\nu),\Omega}$ by
$$\mathcal{O}_{\Omega}\leq \mathcal{O}_{\Omega}'\quad
\overset{{\rm def}.}{\Longleftrightarrow}\quad
\mathcal{O}_{\Omega}'\subset \overline{\mathcal{O}_{\Omega}}.$$
By using this ordering, $(A.2.1)$ can be rewritten as:
$$s(\mathcal{O}_{\Omega})\subset \overline{\mathcal{O}_{\Omega}}=
\mathcal{O}_{\Omega}\cup
\bigcup_{\mathcal{O}_{\Omega}':\mathcal{O}_{\Omega}< \mathcal{O}_{\Omega}'}
\mathcal{O}_{\Omega}'.\eqno{(A.2.2)}$$
\noindent
{\it Proof of \ref{prop:Kim}}.
We will show the statement by the decreasing induction on $\leq$. 
Note that, by the definition, there is a maximal element with respect to the 
ordering $\leq$. Let $\mathcal{O}_{\Omega}$ be a such element. 
Hence, by $(A.2.2)$ and the maximality of $\mathcal{O}_{\Omega}$, 
$s(\mathcal{O}_{\Omega})$ must be equal to $\mathcal{O}_{\Omega}$.
Assume $s(\mathcal{O}_{\Omega}')=
\mathcal{O}_{\Omega}'$ for any $\mathcal{O}_{\Omega}< \mathcal{O}_{\Omega}'$.
Since $s$ is a bijection, we have $s(\mathcal{O}_{\Omega})=\mathcal{O}_{\Omega}$
by $(A.2.2)$. \hfill$\square$
\begin{rem}{\rm
For finite $D,E$ cases and cyclic quiver cases, the similar method does work.
Namely, we can show that $s$ must be the identity for such cases (see 
\cite{Kim}, in detail). \\
}\end{rem}

\end{document}